\newcommand{\mz}{\ensuremath{\mathbb Z}}
\newcommand{\mr}{\ensuremath{\mathbb R}}
\newcommand{\mc}{\ensuremath{\mathbb C}}
\newcommand{\shortmod}{\ensuremath{\negthickspace \negthickspace \negthickspace \pmod}}
\newcommand{\half}{\ensuremath{ \frac{1}{2}}}
\newcommand{\intR}{\int_{-\infty}^{\infty}}
\newcommand{\thalf}{\tfrac12}
\newcommand{\sumstar}{\sideset{}{^*}\sum}
\newcommand{\leg}[2]{\left(\frac{#1}{#2}\right)}
\newcommand{\e}[2]{e\left(\frac{#1}{#2}\right)}
\newcommand{\Ai}{\mbox{Ai}}
\theoremstyle{plain}		
	\newtheorem{mytheo}{Theorem}[section]
	\newtheorem{mycoro}[mytheo]{Corollary}
     \newtheorem{mylemma}[mytheo]{Lemma}
\theoremstyle{remark}
\numberwithin{equation}{section}
\begin{document}
\title{The second moment of $GL(3) \times GL(2)$ $L$-functions, integrated}
\author{Matthew P. Young}
\address{Department of Mathematics \\
	  Texas A\&M University \\
	  College Station \\
	  TX 77843-3368 \\
		U.S.A.}
\email{myoung@math.tamu.edu}
\thanks{Work supported by NSF grant DMS-0758235}

\begin{abstract}
We consider the family of Rankin-Selberg convolution $L$-functions of a fixed $SL(3, \mz)$ Maass form with the family of Hecke-Maass cusp forms on $SL(2, \mz)$.  We estimate the second moment of this family of $L$-functions with a ``long'' integration in $t$-aspect.  These $L$-functions are distinguished by their high degree ($12$) and large conductors (of size $T^{12}$).
\end{abstract}

\maketitle

\section{Introduction}
In this paper we study the second moment of the Rankin-Selberg $L$-functions $L(\phi \times u_j, \half + it)$ of a fixed Hecke-Maass form $\phi$ on $SL(3, \mz)$ convolved with the family $u_j$ of Hecke-Maass cusp forms on $SL(2,\mz)$ as well as with the twists by $n^{it}$.  This family is ``large'' as measured in a variety of ways: there are $T^3$ elements in the family, each having degree $12$ and conductor of size $T^{12}$.  For comparison, the classical large sieve can estimate the eighth moment of the family of classical Dirichlet $L$-functions of modulus $q \leq Q$ (having $Q^2$ elements of degree $8$ and conductors of size $Q^8$). The various $GL(2)$ large sieve type inequalities also generally allow for degree $8$ $L$-functions with similarly-sized conductors, so that one can make a case that this family is significantly larger than others appearing in the literature.  For use in applications, it is highly desirable to have control over large families of harmonics, as they produce stronger detectors of arithmetical functions; see the introduction of \cite{DukeI2}. 

Another way to motivate interest in this particular family 
is that the first moment (at the central point 
$s=1/2$) was recently used by X. Li \cite{Li} \cite{Li2} to show subconvexity for a 
self-dual $GL(3)$ $L$-function in $t$-aspect (amongst other things).  The self-duality is crucially used to impose nonnegativity of the central values.  
In order to use moments to study non-self-dual forms, as well as Rankin-Selberg convolutions at points other than $s=1/2$, it seems necessary to study the second moment.  
However, this approach has substantial new difficulties. In particular, the second moment of this family at the central point has prohibitively large conductors (of size $T^{12}$ compared to $T^2$ elements in the family, a sixth power).  However, one can enlarge the size of the family without substantially growing the size of the conductors by twisting by $n^{it}$ with $t$ almost as large as the spectral parameter.  This procedure then brings the problem into the presumably more feasible range where the conductor is the fourth power of the size of the family.  Even so, the conductors of the family are still very large so that estimating this moment requires a substantial amount of cancellation.  In fact, the main difficulty is showing simultaneous cancellation in the twists by the Hecke-Maass forms as well as by $n^{it}$.  Many of the methods in the literature used to show cancellation in the Maass form aspect are incompatible with the $t$-aspect integration.

\begin{mytheo}
\label{thm:mainthm}
 We have
\begin{equation}
\label{eq:secondmoment}
 \int_{-T^{1-\varepsilon}}^{T^{1-\varepsilon}} \sum_{\substack{T < t_j \leq 2T \\ u_j \text{ even}}} |L(u_j \times \phi, \tfrac12 + i t)|^2 dt \ll T^{3 + \varepsilon}.
\end{equation}
\end{mytheo}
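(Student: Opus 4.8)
The plan is to open the $L$-values by an approximate functional equation, carry out the long $t$-integration, apply the Kuznetsov trace formula to the spectral average, and then use $GL(3)$ Voronoi summation to extract cancellation in the Hecke coefficients of $\phi$ twisted against the resulting Kloosterman sums. To set up, I would first replace the sharp condition $T<t_j\le 2T$ by a smooth, rapidly decaying majorant $h(t_j)$ with $h\gg 1$ on $[T,2T]$; for an upper bound this is harmless, and it simultaneously allows the positive Kuznetsov harmonic weights (of size $T^{o(1)}$ on this range) to be inserted. Because $|t|\le T^{1-\varepsilon}$ is ``short'' relative to $t_j\asymp T$, the analytic conductor of $L(u_j\times\phi,\tfrac12+it)$ stays $\asymp T^6$ uniformly (so $|L|^2$ has conductor $\asymp T^{12}$, and $t$ avoids the resonances $\pm t_j$); the approximate functional equation for $|L(u_j\times\phi,\tfrac12+it)|^2$ thus expresses it as a smoothly weighted double sum over $m^2 n,\ m'^2 n'\lesssim T^{3+\varepsilon}$ of $A(m,n)\overline{A(m',n')}\,\lambda_j(n)\overline{\lambda_j(n')}\,(m^2 n)^{-1/2-it}(m'^2 n')^{-1/2+it}$, plus a dual term handled identically by the functional equation; a dyadic partition of unity reduces matters to dyadic boxes in each variable.

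Integrating over $|t|\le Y:=T^{1-\varepsilon}$ produces the factor $\int_{-Y}^{Y}\left(\tfrac{m'^2 n'}{m^2 n}\right)^{it}(\cdots)\,dt$, which is negligible unless $m^2 n$ and $m'^2 n'$ agree up to relative error $T^{-1+\varepsilon}$; this ``conductor-lowering'' effect of the long integration is precisely what brings the conductor down to the fourth power of the size of the family. The strictly diagonal part $m^2 n=m'^2 n'$ I would evaluate with the Kuznetsov diagonal term, which costs $\sum_j h(t_j)\asymp T^2$, together with the Rankin--Selberg bounds $\sum_{n\le x}|\lambda_j(n)|^2\ll_j x^{1+\varepsilon}$ (after the $j$-average) and $\sum_{m^2 n\le x}|A(m,n)|^2\ll x^{1+\varepsilon}$; this yields a total $\ll Y\cdot T^{2+\varepsilon}=T^{3+\varepsilon}$, already matching \eqref{eq:secondmoment} and showing the bound is essentially best possible.

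For the off-diagonal I would expand $\lambda_j(n)\overline{\lambda_j(n')}$ by Hecke multiplicativity and apply the Kuznetsov formula, trading the spectral average for (the diagonal just treated, plus) a sum of Kloosterman sums $S(\,\cdot\,,1;c)$ weighted by the Bessel transform of $h$; the localization of $h$ near $T$ confines the relevant moduli to $c\lesssim \sqrt{nn'}/T$. To the surviving sums over $m,n$ (and over $m',n'$), which carry the $GL(3)$ Fourier coefficients of $\phi$, I would then apply the $GL(3)$ Voronoi summation formula, dualizing and shortening those variables and introducing the $GL(3)$ Voronoi integral kernel; a Cauchy--Schwarz step may be inserted here to decouple the two triples of summation variables.

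What remains is to bound an oscillatory sum/integral carrying three phases --- the Kuznetsov Bessel kernel $e^{\pm 4\pi i\sqrt{\,\cdot\,}/c}$, the $GL(3)$ Voronoi kernel, and the residual phase $e^{it\log(\,\cdot\,)}$ from the remaining $t$-integral --- against (hyper-)Kloosterman sums estimated by Weil's bound and coefficient sums estimated by Rankin--Selberg. A careful multidimensional stationary-phase analysis should show that these phases interfere so as to produce substantial further cancellation, after which summing the essentially geometric series in $c$ and the dual moduli delivers $\ll T^{3+\varepsilon}$ for the off-diagonal as well. I expect this last step to be the main obstacle: a crude estimate of the off-diagonal (Weil's bound plus the triangle inequality) exceeds $T^{3+\varepsilon}$ by a substantial power of $T$, so one must extract genuine cancellation \emph{simultaneously} in the Maass-form aspect (the Kloosterman sums) and in the $n^{it}$ aspect (the oscillatory $t$-integral). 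As the introduction notes, the standard devices for the spectral average alone are incompatible with the $t$-integration, and getting $GL(3)$ Voronoi summation, the Kuznetsov kernel, and the $t$-integration to cooperate within a single stationary-phase computation is the heart of the argument.
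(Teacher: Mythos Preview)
Your outline has the right ingredients (approximate functional equation, smooth spectral majorant, Kuznetsov, $GL(3)$ Voronoi), but it is missing the two ideas that actually make the argument close, and in one place it proposes a step the paper explicitly explains would fail.

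First, after Kuznetsov the off-diagonal carries the Bessel phase $e\!\left(\pm \tfrac{4\pi\sqrt{mn}}{c}\right)$ with $c\lesssim \sqrt{mn}/T$. You propose to apply $GL(3)$ Voronoi directly to the $A(l,n)$-sum against this phase. The paper points out (opening of the ``Diophantine approximation'' section) that doing so is catastrophic: the dual variable after Voronoi has length $\approx T^{6}$ for typical parameters. The missing idea is to first \emph{dampen} the Bessel phase using the identity $-2\sqrt{mn}=-m-n+(\sqrt{m}-\sqrt{n})^2$. Since the $t$-integral forces $|m-n|\ll N/U$, the ``remainder'' $(\sqrt{m}-\sqrt{n})^2/c$ is mild and can be absorbed into a weight, while the periodic part $e\!\left(\tfrac{-m-n}{c}\right)$ is multiplied into the Kloosterman sum. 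Opening $S(m,n;c)e\!\left(\tfrac{-m-n}{c}\right)$ produces the $ab=c$ decomposition with additive characters $e\!\left(\tfrac{\overline{x}m}{b}\right)$, $e\!\left(\tfrac{-\overline{(x+a)}n}{b}\right)$, and it is only \emph{after} this arithmetical conversion that Voronoi is applied (and only in the range where $a$ is large; for $a$ small Voronoi is not used at all). Your ``stationary phase of three coupled phases'' program never gets to this point because the Voronoi-dual sum is too long to begin with.

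Second, your endgame is Weil's bound plus a multidimensional stationary-phase estimate. The paper does not use Weil at all in the decisive step; the main estimation tool is a hybrid large sieve inequality over Farey fractions together with an oscillatory integral (Lemma~\ref{lemma:variantsieve}),
\[
\int_{-T}^{T}\sum_{b\le B}\sum_{(x,b)=1}\Big|\sum_m b_m\,e\!\Big(\frac{xm}{b}\Big)e(tf(m))\Big|^2 dt \ \ll\ (B^2T+X)\sum_m|b_m|^2,
\]
applied \emph{twice}: once before Voronoi to dispose of small $a$, and once after Voronoi (following a stationary-phase computation of the Voronoi transform $\Phi$) to finish the large-$a$ range. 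Crucially, the $t$-integral is \emph{not} executed at the outset to impose a near-diagonal; it is kept alive and fed into this large sieve, which is exactly how the simultaneous cancellation in the Maass-form and $n^{it}$ aspects is captured. A Weil-plus-triangle-inequality treatment, as you anticipate, falls short by a power of $T$, and stationary phase alone does not recover it; the bilinear large sieve structure coming from the $ab=c$ decomposition is what closes the gap.
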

{\bf Remarks.}
\begin{itemize}
 \item
The conductor of $|L(u_j \times \phi, \half + i t)|^2$ is $\asymp T^{12}$, so that the convexity bound is recovered above using the method of Heath-Brown (Lemma 3 of \cite{H-B}).  
\item An easy consequence of Theorem \ref{thm:mainthm} is that for ``almost all'' $t_j$ in the family,  $$\int_{-T^{1 - \varepsilon}}^{T^{1-\varepsilon}}  |L(u_j \times \phi, \tfrac12 + i t)|^2 dt \ll T^{1 + \varepsilon},$$
in the sense that for any fixed $\varepsilon > 0$, the number of $t_j$ for which this bound is not satisfied is $o(T^2)$.
\item The reason $t$ is slightly smaller than $T$ is to avoid the intricate dependence of the conductor on $t_j$, though for $t= t_j$ see the companion paper \cite{Y}.  With some extra work it is likely that one could extend the left hand side of \eqref{eq:secondmoment} to $|t| \leq T, t_j \leq T$, but we have not carried out the details.  
\item The method of proof can also handle the analogous twists in the weight aspect by classical holomorphic modular forms on the full modular group; see Section 8 of \cite{ILS}.
\item The combined $t$-integral and spectral sum is reminiscent of Sarnak's work on the fourth moment of Gr\"{o}ssencharacter $L$-functions \cite{Sgross}.
\item Diaconu, Garrett, and Goldfeld \cite{DGG} have generalized the method of Good \cite{Good} to capture quantities of the form \eqref{eq:secondmoment}, but with certain weight functions (depending on $t_j$) in the integral.  It is difficult to asymptotically evaluate these weight functions,
so it is unknown what this implies about \eqref{eq:secondmoment}.
\item A. Venkatesh posed this problem during a problem session at the October 2006 AIM workshop on the subconvexity problem.
\end{itemize}

Theorem \ref{thm:mainthm} potentially represents progress towards subconvexity for these $L$-functions.  If one could shrink the family by any significant amount and still obtain a Lindel\"{o}f-consistent upper bound then one would obtain a subconvex bound.  This would require another source of cancellation. (Perhaps from solving the shifted convolution problem for a fixed $GL(3)$ Maass form?)  An additional problem is that one has to abandon the use of the large sieve inequality (a crucial ingredient in our proof) yet reclaim its substantial savings effect.  

A natural way to attack this problem is with a hybrid large sieve inequality of the form
\begin{equation}
\label{eq:hybridspectral}
\int_{-U}^{U} \sum_{t_j \leq T}  \Big| \sum_{n \leq N} a_n \lambda_j(n) n^{it} \Big|^2 dt \ll \Delta(N, T, U) (NTU)^{\varepsilon} \sum_{n \leq N} |a_n|^2.
\end{equation}
A simple application of Iwaniec's spectral large sieve \cite{Iw2} shows $\Delta(N,T, U) \ll U (N + T^2)$, and one would like to replace this by $N + U T^2$.  However, this appears to be an extremely difficult problem, and in fact in this generality it essentially implies the Ramanujan-Petersson conjecture for Maass forms! (To see this, take $U$ to be a large fixed power of $N$ to pick out the diagonal only on the left hand side and choose $a_n$ to select $n=N$ only, showing $|\lambda_j(N)|^2 \ll T^2 (TN)^{\varepsilon} \ll_{j} N^{\varepsilon}$.)
One might even consider a simpler problem where $U$ has restricted size (say $U \leq T$) and $N$ is large with respect to $T$.  Even this seems to be a difficult and highly interesting problem (in my opinion).  The state of affairs here for $GL(2)$ harmonics is quite different than for $GL(1)$ (multiplicative characters), where we do have the essentially optimal result of Gallagher \cite{Gallagher}
\begin{equation}
\label{eq:Gallagher}
 \int_{-U}^{U} \sum_{q \leq Q} \frac{q}{\phi(q)} \sumstar_{\chi \shortmod{q}} \Big| \sum_{n \leq N} a_n \chi(n) n^{it} \Big|^2 dt \ll (N+ UQ^2)  \sum_{n \leq N} |a_n|^2.
\end{equation}

The difficulty of estimating \eqref{eq:hybridspectral} for general coefficients $a_n$ is a barrier in our problem, which requires good estimates when the $a_n$'s are specialized to be coefficients of a $GL(3)$ $L$-function.  However, we have more available tools for these specific choices of coefficients, and in particular the $GL(3)$ Voronoi summation formula plays a key role.  X. Li \cite{Li} \cite{Li2} showed how this summation formula can be very powerful in the study of this family, but an attempt to directly generalize her approach on the first moment meets extreme difficulties with the second moment (see the first few sentences of Section \ref{section:Diophantine} below).


In our companion paper \cite{Y}, we considered this family of $L$-functions at the special point $\half + it_j$.  There are some similarities between the two problems but each requires substantially different ideas.  In particular, the analog of one of the key ideas in \cite{Y} (namely, Poisson summation in the variable $a$ modulo $b$ in  \eqref{eq:6.3} below) is not used here as it ultimately seemed not to not be substantially helpful, despite some promising hints.  Another major difference between these works is that in \cite{Y} we were able to appeal to some large sieve inequalities due to \cite{Luo} (after \cite{DI}), which we improved further, while here we could not use anything of the form \eqref{eq:hybridspectral}.  Furthermore, the conductors of the family of $T^2$ elements in \cite{Y} have size $T^{6}$ (the cube of the number of elements in the family as opposed to the fourth power appearing here), which has the effect that the $GL(3)$ Voronoi summation formula is relatively less powerful in this article than in the companion.  Indeed, the dual sum after Voronoi summation had essentially no length! (Though it should be stressed that this feature only occurred due to our improvement on the relevant large sieve inequality.)  This paper is independent of \cite{Y} (at the cost of some repetition).

\subsection{Acknowledgements}
I would like to thank Brian Conrey, Adrian Diaconu, Xiaoqing Li, Peter Sarnak, and Akshay Venkatesh for interesting discussions, and the referee for a very careful reading which greatly improved the paper.

\section{Notation}
See \cite{Goldfeld} for the material and notation on $GL(3)$ Maass forms.  Suppose $\phi$ is a Maass form for $SL(3, \mz)$ of type $(\nu_1, \nu_2) \in \mc^{2}$ 
which is an eigenfunction of all the Hecke operators.  The Godement-Jacquet $L$-function associated to $\phi$ is
\begin{equation*}
 L(\phi, s) = \sum_{n=1}^{\infty} \frac{A(1,n)}{n^s} = \prod_{p} (1 - A(1,p)p^{-s} + A(p,1)p^{-2s} - p^{-3s})^{-1}.
\end{equation*}
Here $A(m,n)$ are the Fourier coefficients normalized as in \cite{Goldfeld}.  In particular, $A(1,1) =1$ and $|A(m,n)|^2$ are constant on average (see Remark 12.1.8 of \cite{Goldfeld}).  The dual Maass form $\widetilde{\phi}$ is of type $(\nu_2, \nu_1)$ and has $A(n,m) = \overline{A(m,n)}$ as its $(m,n)$-th Fourier coefficient, whence
 $L(\widetilde{\phi}, s) = \sum_{n=1}^{\infty} A(n,1) n^{-s}$. 
Letting
\begin{equation*}
 \Gamma_{\nu_1, \nu_2}(s) = \pi^{-3s/2} \Gamma\leg{s + 1 - 2 \nu_1 - \nu_2}{2} \Gamma\leg{s + \nu_1 - \nu_2}{2} \Gamma\leg{s - 1 + \nu_1 + 2\nu_2}{2},
\end{equation*}
the functional equation for $L(\phi, s)$ reads
\begin{equation}
 \Gamma_{\nu_1, \nu_2}(s) L(\phi, s) = \Gamma_{\nu_2, \nu_1}(1-s) L(\widetilde{\phi}, 1-s).
\end{equation}

Let $(u_j)$ be an orthonormal basis of Hecke-Maass cusp forms on $SL(2,\mz)$ with corresponding Laplace eigenvalues $\frac14 + t_j^2$.  Let $\lambda_j(n)$ be the Hecke eigenvalue of the $n$-th Hecke operator for the form $u_j$.  Since the Hecke operators on $GL(2)$ are self-adjoint, the $\lambda_j(n)$'s are real.  Then
 $L(u_j,s) = \sum_{n=1}^{\infty} \lambda_j(n) n^{-s}$
satisfies a functional equation relating to $L(u_j,1-s)$.  To be clear, the Hecke operators are normalized so that the Ramanujan-Petersson bound is $|\lambda_j(p)| \leq 2$.  Say that the $n$-th Fourier coefficient of $u_j$ is $\rho_j(n)$, so that $\rho_j(n) = \rho_j(1) \lambda_j(n)$.  With the definition
\begin{equation}
\label{eq:alphaj}
 \alpha_j = \frac{|\rho_j(1)|^2}{\cosh(\pi t_j)},
\end{equation}
then $t_j^{-\varepsilon} \ll \alpha_j \ll t_j^{\varepsilon}$ due to \cite{HL} and \cite{I}.

As explained in Chapter 12.2 of \cite{Goldfeld}, the Rankin-Selberg convolution of $\phi$ and $u_j$ is
\begin{equation}
\label{eq:Rankin}
 L(u_j \times \phi, s) = \sum_{m,n =1}^{\infty} \frac{\lambda_j(n) A(m,n)}{(m^2 n)^s}.
\end{equation}
The completed $L$-function associated to $L(u_j \times \phi, s)$, for $u_j$ even, 
takes the form 
\begin{multline}
 \Lambda(u_j \times \phi, s) = \pi^{-3s} 
\Gamma\leg{s - i t_j - \alpha}{2} \Gamma\leg{s - i t_j - \beta}{2} \Gamma\leg{s - i t_j - \gamma}{2}
\\
\Gamma\leg{s + i t_j - \alpha}{2} \Gamma\leg{s + i t_j - \beta}{2} \Gamma\leg{s + i t_j - \gamma}{2}
L(u_j \times \phi, s),
\end{multline}
where $\alpha = -\nu_1 - 2 \nu_2 + 1$, $\beta = -\nu_1 + \nu_2$, and $\gamma = 2 \nu_1 + \nu_2 -1$ (see Theorem 12.3.6 of \cite{Goldfeld} for the explicit gamma factors).  Then this Rankin-Selberg convolution has a holomorphic continuation to $s \in \mc$ and satisfies the functional equation
\begin{equation}
 \Lambda(u_j \times \phi, s) = \Lambda(u_j \times \widetilde{\phi}, 1-s).
\end{equation}

\section{Basic tools}
\subsection{Approximate functional equation}
We shall use an approximate functional equation to represent the values of $L$-functions.  Write $\lambda_{u_j \times \phi}(n)$ for the coefficient of $n^{-s}$ in the Dirichlet series \eqref{eq:Rankin} for $L(u_j \times \phi, s)$.  Then Theorem 5.3 of \cite{IK} says for any $X > 0$,
\begin{equation}
\label{eq:AFE}
 L(u_j \times \phi, \tfrac12 + i t) = \sum_n \frac{\lambda_{u_j \times \phi}(n)}{n^{\half + i t}} V_{\half + i t}(n/X) + \epsilon_{t, t_j} \sum_n \frac{\lambda_{u_j \times \widetilde{\phi}}(n)}{n^{\half - i t}} V_{\half - i t}^*(nX),
\end{equation}
where $V_{s}(y)$ and $V_{s}^{*}(y)$ are 
certain explicit smooth functions, and $\epsilon_{t, t_j}$ is a certain complex number of absolute value $1$.  
More precisely,
\begin{equation}
\label{eq:V}
 V_{\half + it}(y) = \frac{1}{2 \pi i} \int_{(3)} y^{-s} \frac{\gamma(\half + i t + s)}{\gamma(\half + i t)} \frac{G(s)}{s} ds,
\end{equation}
where $\Lambda(u_j \times \phi, s) = \gamma(s) L(u_j \times \phi, s)$ and $G(s)$ is an entire function with rapid decay in the imaginary direction.  Here $V^{*}_{\half - i t}$ has a similar form to $V_{\half + it}$ but with $\gamma(s)$ replaced by $\gamma^*(s)$, where $\Lambda(u_j \times \widetilde{\phi}, s) = \gamma^*(s) L(u_j \times \widetilde{\phi}, s)$.

\subsection{The large sieve}
The classical large sieve inequality for Farey fractions states
\begin{equation}
\label{eq:largesieve}
 \sum_{b \leq B} \sum_{\substack{x \shortmod{b} \\ (x,b) = 1}} \Big| \sum_{N \leq m < N + M} a_m \e{xm}{b} \Big|^2 \leq (B^2 + M) \sum_{N \leq m < N+ M} |a_m|^2.
\end{equation}
For our purposes we require an additional oscillatory integral in the spirit of \cite{Gallagher}, but we could not find the following result in the literature.
\begin{mylemma}
\label{lemma:variantsieve}
Let $f(y)$ be a continuously differentiable function on $[N, N+M]$ such that $f'$ does not vanish.  Let $X = \sup_{y \in [N, N+M]} \frac{1}{|f'(y)|}$.  Then for any complex numbers $b_m$,
\begin{equation}
\label{eq:9.2}
 \int_{-T}^{T} \sum_{b \leq B} \sum_{\substack{x \shortmod{b} \\ (x,b) = 1}} \Big| \sum_{N \leq m < N + M} b_m \e{xm}{b} e(t f(m)) \Big|^2 dt \ll (B^2 T + X) \sum_{N \leq m < N+ M} |b_m|^2.
\end{equation}
\end{mylemma}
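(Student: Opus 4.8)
The plan is to reduce the claimed inequality \eqref{eq:9.2} to the classical large sieve \eqref{eq:largesieve} by discretizing the $t$-integral, and to handle the oscillatory factor $e(tf(m))$ by a stationary-phase/partition argument. First I would cover the interval $[-T,T]$ by $O(T)$ unit subintervals $I_k = [k, k+1]$; it suffices to prove the bound on each $I_k$ with the right-hand side $(B^2 + X)\sum |b_m|^2$ and then sum over the $O(T)$ intervals, absorbing the $B^2 T$. On a fixed unit interval we want to trade the integral over $t$ for a sum over a $1$-spaced net of points $t = t_k$, which is legitimate by a Sobolev-type inequality: for a smooth function $g$ on $[k,k+1]$ one has $\int_{k}^{k+1} |g(t)|^2\,dt \ll |g(t_k)|^2 + \int_k^{k+1} |g(t)||g'(t)|\,dt$, and applying this with $g(t) = \sum_m b_m e(xm/b) e(tf(m))$ produces, after Cauchy--Schwarz, a main term controlled by $\sum_{x,b}|g(t_k)|^2$ plus a cross term involving the derivative $g'(t) = 2\pi i \sum_m b_m f(m) e(xm/b)e(tf(m))$.

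The main term $\sum_{b\le B}\sum_{x}^{*}|g(t_k)|^2$ is exactly of the shape \eqref{eq:largesieve} with coefficients $a_m = b_m e(t_k f(m))$ (the phase $e(t_k f(m))$ does not change $|a_m|$), so it is $\ll (B^2 + M)\sum|b_m|^2$. The derivative cross term is the genuinely delicate part. Here I would \emph{not} bound $f(m)$ trivially by $\sup |f|$, which could be as large as $T(N+M)$ and would destroy the estimate; instead I would exploit that $f'$ does not vanish, so $f$ is monotone on $[N,N+M]$ and by a change of variables one can reparametrize the sum so that the effective ``frequency spacing'' is governed by $X = \sup 1/|f'|$ rather than by the size of $f$ itself. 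Concretely, after an integration by parts in $t$ (or equivalently writing $g' $ and resumming), the relevant quantity becomes a large-sieve-type sum in which the continuous parameter enters through $f$, and one applies \eqref{eq:largesieve} in the dual variable to get the factor $X$; the condition that $f'$ is single-signed guarantees there is no stationary point and hence no extra loss.

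The step I expect to be the main obstacle is making rigorous the passage from the $t$-integral to the discrete sum while keeping the $X$ (rather than $\sup|f|$) in the error: one must show that the contribution of $g'$ can be rewritten, via integration by parts exploiting monotonicity of $f$, as something again bounded by the large sieve with the stated constants, rather than naively estimating $|g'|\le 2\pi\sup|f|\cdot(\text{something})$. An alternative and perhaps cleaner route to the same end is to mimic Gallagher's proof of \eqref{eq:Gallagher} directly: expand the square, carry out the $t$-integral to produce a kernel $\int_{-T}^{T} e(t(f(m)-f(n)))\,dt$ which is $\ll \min(T, \|f(m)-f(n)\|^{-1})$, then use the monotonicity of $f$ (so $|f(m)-f(n)| \gg |m-n|/X$ for $m,n$ in the same ``block'' of length $\ll X$) together with the classical large sieve \eqref{eq:largesieve} applied blockwise; summing the $O(M/X + 1)$ blocks, or rather estimating the diagonal-plus-near-diagonal contribution, yields $(B^2T + X)\sum|b_m|^2$. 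Either way the combinatorial bookkeeping of the off-diagonal terms in $(m,n)$ and their interaction with the moduli $b$ is where the care is needed; everything else is routine.
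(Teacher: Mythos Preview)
Your first approach has a genuine arithmetic error: if on each of the $O(T)$ unit subintervals you prove the bound $(B^2+X)\sum|b_m|^2$, then summing gives $T(B^2+X)=TB^2+TX$, not $TB^2+X$. The $X$ term is \emph{not} supposed to pick up a factor of $T$, so discretizing the $t$-integral this way cannot succeed. (The correct reduction to $T=1$ is by the substitution $t\to tT$: then the phase becomes $e(t\,Tf(m))$, the relevant $X$ becomes $X/T$, and one proves the bound $(B^2+X/T)\sum|b_m|^2$ for the unit interval, which after multiplying back by the Jacobian $T$ gives exactly $TB^2+X$.) Your Sobolev/derivative route on each subinterval also founders for the reason you yourself flagged: there is no device in that argument which replaces $\sup|f|$ by $X$.

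Your alternative (``Gallagher-style'') approach is the right idea and is essentially what the paper does, but you are missing the one technical trick that makes the blockwise large sieve legal. After reducing to $T=1$, the paper majorizes $\mathbf 1_{[-1,1]}$ by a Schwartz function $g$ whose Fourier transform has \emph{compact} support; expanding the square then gives $\sum_{m,n}c_m\overline{c_n}\,\widehat g(f(m)-f(n))$, and the compact support of $\widehat g$ together with the mean value theorem forces $|m-n|\ll X$. One now partitions $[N,N+M]$ into intervals $I$ of length $\min(M,X)$, so that only pairs $(I,J)$ with $I,J$ adjacent contribute. The crucial step you left vague---how to ``apply the large sieve blockwise'' once the square is already opened---is to \emph{undo} the Fourier transform: write $\widehat g(f(m)-f(n))=\int g(t)e(tf(m))\overline{e(tf(n))}\,dt$, so the near-diagonal double sum becomes $\int g(t)\sum_{I,J\ \text{nearby}}(\sum_{m\in I}c_m e(tf(m)))\overline{(\sum_{n\in J}c_n e(tf(n)))}\,dt$. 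Now Cauchy--Schwarz on the $O(1)$ nearby pairs $(I,J)$ costs nothing, collapsing everything to $\int g(t)\sum_I|\sum_{m\in I}c_m e(tf(m))|^2\,dt$, and the classical large sieve \eqref{eq:largesieve} applies with $a_m=b_m e(tf(m))$ on each block of length $\min(M,X)$, yielding $(B^2+X)\sum|b_m|^2$. Without this Fourier-reversal device you are stuck with an opened bilinear form in $m,n$ that does not obviously rearrange into a sum of squares over Farey fractions; trying to bound the Ramanujan-sum kernel $\sum_{b\le B}c_b(m-n)$ and the oscillatory $t$-kernel separately leads at best to logarithmic losses.
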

\noindent We reproduce the proof appearing in \cite{Y}.
\begin{proof}
By the change of variables $t \rightarrow tT$, it suffices to consider the case $T=1$.  Let $g$ be a nonnegative Schwartz function such that $g(x) \geq 1$ for $|x| \leq 1$, such that $\widehat{g}$ has compact support.  See \cite{Vaaler} for a nice survey on such functions as well as some ideas relevant in this proof. Then for any sequence of complex numbers $c_m$, we have 
\begin{equation}
\label{eq:9.3}
\int_{-1}^{1} \big| \sum_m c_m e(t f(m)) \big|^2 dt \leq \sum_{m,n} c_m \overline{c_n} \widehat{g}(f(m) - f(n)).
\end{equation}
Since $\widehat{g}$ is compactly supported, we must have $|f(m) - f(n)| \ll 1$.  By the mean-value theorem, $|f(m) - f(n)| \geq |m-n| \inf_y |f'(y)|$, so $|m-n| \ll X$.  Dissect the sum over $m$ and $n$ into boxes $I \times J$ of sidelength $\ll \min(M, X)$ so that the only relevant boxes $I \times J$ have $I$ and $J$ either equal or adjacent (``nearby'', say).  Thus the right hand side of \eqref{eq:9.3} equals
\begin{equation}
\label{eq:9.4}
 \sum_{I, J \text{ nearby}} \sum_{(m,n) \in I \times J} c_m \overline{c_n} \widehat{g}(f(m) - f(n)).
\end{equation}
Having enforced the condition that $I$ and $J$ are nearby, we then reverse the Fourier transform to express it in terms of $g$, getting that \eqref{eq:9.4} equals
\begin{equation}
\label{eq:9.5}
\intR g(t) \sum_{I, J \text{ nearby}} \sum_{(m,n) \in I \times J} c_m e(tf(m)) \overline{c_n e(tf(n))} dt.
\end{equation}
By Cauchy's inequality, \eqref{eq:9.5} is
\begin{equation*}
 \ll \intR g(t) \sum_{I} \Big| \sum_{m \in I} c_m e(t f(m)) \Big|^2 dt.
\end{equation*}
Specializing this to $c_m = b_m \e{xm}{b}$ and summing over $x$ and $b$ appropriately gives that the left hand side of \eqref{eq:9.2} (with $T=1$) is
\begin{equation}
\label{eq:9.7}
 \ll \intR g(t) \sum_{I} \sum_{b \leq B} \sum_{\substack{x \shortmod{b} \\ (x,b) = 1}} \Big| \sum_{m \in I} b_m \e{xm}{b} e(t f(m)) \Big|^2 dt.
\end{equation}
By \eqref{eq:largesieve} with $a_m = b_m e(t f(m))$, we complete the proof noting that \eqref{eq:9.7} is
\begin{equation*}
\ll \intR g(t) \sum_{I} (B^2 + \min(M, X)) \sum_{m \in I} |b_m|^2 \ll (B^2 + X) \sum_{N \leq m < N + M} |b_m|^2. \qedhere
\end{equation*}
\end{proof}

\subsection{Kuznetsov formula}
Our tool for summing over the spectrum is the following.
\begin{mylemma}[Kuznetsov] 
\label{lemma:Kuznetsov}
Suppose that $h$ is holmorphic in the region $|\text{Im}(r)| \leq \half + \delta$ and satisfies $h(r) = h(-r)$ and $|h(r)| \ll (1+ |r|)^{-2 -\delta}$ for some $\delta > 0$.  Then
 \begin{multline*}
  \sum_{j} \alpha_j \lambda_j(m) \lambda_j(n) h(t_j) + \frac{1}{\pi} \intR \frac{\sigma_{2ir}(m) \sigma_{2ir}(n)}{(mn)^{ir} |\zeta(1+2ir)|^2} h(r) dr 
\\
= \pi^{-2} \delta_{m=n} \intR r \tanh(\pi r) h(r) dr + \sum_{c=1}^{\infty} \frac{S(m,n;c)}{c} \check{h}\leg{4 \pi \sqrt{mn}}{c},
 \end{multline*}
where $\alpha_j$ is defined by \eqref{eq:alphaj} and 
\begin{equation*}
 \check{h}(x) = \frac{2i}{\pi} \intR \frac{r h(r)}{\cosh(\pi r)} J_{2ir}(x) dr. 
\end{equation*}
\end{mylemma}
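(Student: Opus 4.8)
The plan is to follow the classical Bruggeman--Kuznetsov argument via Poincaré series, computing a single Fourier coefficient of a suitable Poincaré series in two different ways. Fix an auxiliary test function $\psi\colon(0,\infty)\to\mc$ that is smooth and decays rapidly as $y\to0$ and $y\to\infty$, and for an integer $m\geq1$ set
\[
 P_m(z)=\sum_{\gamma\in\Gamma_\infty\backslash\Gamma}\psi\big(\mathrm{Im}(\gamma z)\big)\,e\big(m\,\mathrm{Re}(\gamma z)\big),
\]
where $\Gamma=SL(2,\mz)$ and $\Gamma_\infty$ is its subgroup of upper-triangular matrices. The decay of $\psi$ makes the series absolutely convergent and places $P_m\in L^2(\gammamodh)$.

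First I would compute the ``geometric'' $n$-th Fourier coefficient of $P_m$ at the cusp: split the coset sum according to the lower-left entry $c$ of a representative, using the Bruhat decomposition of $\Gamma$. The $c=0$ term contributes only when $n=m$ and gives $\psi(y)\,\delta_{m=n}$, while each $c\geq1$ produces a Kloosterman sum $S(m,n;c)$ times an integral of the shape $\int_{-\infty}^{\infty}\psi\!\big(\tfrac{y}{c^2(x^2+y^2)}\big)e\!\big(\tfrac{-mx}{c^2(x^2+y^2)}-nx\big)\,dx$, which after the usual change of variables becomes an explicit Bessel-type transform $B_\psi$ of $\psi$ evaluated at $4\pi\sqrt{mn}/c$. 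Hence the $n$-th coefficient of $P_m$ has exactly the form $\psi(y)\,\delta_{m=n}$ plus $\sum_{c\geq1}c^{-1}S(m,n;c)\,B_\psi(4\pi\sqrt{mn}/c)$, which matches the right-hand side of the lemma.

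Next I would compute the same coefficient ``spectrally'' by expanding $P_m=\sum_j\langle P_m,u_j\rangle u_j+(\text{Eisenstein packet})+(\text{constant term})$ in $L^2(\gammamodh)$. Unfolding $\langle P_m,u_j\rangle$ against the Fourier expansion of $u_j$, whose $m$-th coefficient is $\rho_j(m)$ times a $\sqrt{y}\,K_{it_j}(2\pi m y)$ Whittaker function, yields $\langle P_m,u_j\rangle=\overline{\rho_j(m)}\,\Phi_\psi(t_j)$ for an explicit integral transform $\Phi_\psi$ of $\psi$; the analogous computation for the Eisenstein series replaces $\rho_j(m)$ by $\sigma_{2ir}(m)m^{-ir}/\zeta(1+2ir)$. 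Reading off the $n$-th Fourier coefficient of the spectral expansion, and using $\rho_j(n)=\rho_j(1)\lambda_j(n)$ with $\lambda_j(n)\in\mr$ together with the definition \eqref{eq:alphaj} of $\alpha_j$, the discrete part becomes $\sum_j\alpha_j\lambda_j(m)\lambda_j(n)\big(\cosh(\pi t_j)\Phi_\psi(t_j)\big)$, the continuous part becomes the integral against $\sigma_{2ir}(m)\sigma_{2ir}(n)(mn)^{-ir}|\zeta(1+2ir)|^{-2}$, and the constant-function term contributes a further multiple of $\delta_{m=n}$ --- precisely the left-hand side of the lemma, with $\cosh(\pi t_j)\Phi_\psi(t_j)$ playing the role of $h(t_j)$.

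The final, and most delicate, step is to run this identity in reverse. Given $h$ satisfying the stated hypotheses --- even, holomorphic in $|\mathrm{Im}\,r|\leq\tfrac12+\delta$, and $h(r)\ll(1+|r|)^{-2-\delta}$ --- I must produce a $\psi$ for which $\cosh(\pi t)\Phi_\psi(t)=h(t)$; this is exactly an inversion formula for the $K$-Bessel (Kontorovich--Lebedev / Sears--Titchmarsh) transform, and the holomorphy and decay hypotheses on $h$ are what license the contour shifts in the inversion and guarantee absolute convergence of the resulting discrete spectral sum (using the Weyl law for $t_j$ together with $\alpha_j\ll t_j^\varepsilon$). One then identifies the associated $B_\psi$ with $\check{h}$ via the classical formula rewriting a $K$-Bessel transform as a $J$-Bessel transform --- this is where the kernel $\frac{2i}{\pi}\intR\frac{r h(r)}{\cosh(\pi r)}J_{2ir}(x)\,dr$ appears --- and checks the constant $\pi^{-2}$ and the weight $r\tanh(\pi r)$ in the diagonal term against the contribution of the constant function. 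Since the Bessel inversion is cleanest on the dense subclass of $h$ arising as $\cosh(\pi\cdot)\Phi_\psi$ for Schwartz $\psi$, I would first establish the formula there and then extend to all admissible $h$ by approximation. I expect the main obstacle to be precisely this transform-theoretic bookkeeping --- pinning down every Bessel identity and normalizing constant exactly --- rather than any single hard estimate.
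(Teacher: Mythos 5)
The paper does not actually prove this lemma: it is the classical Bruggeman--Kuznetsov trace formula, quoted as a standard tool (it is Theorem 9.3 of Iwaniec's \emph{Spectral Methods in Automorphic Forms}, or Theorem 16.3 of Iwaniec--Kowalski), so there is no in-paper argument to compare against. Your Poincar\'e-series strategy --- computing geometrically via the Bruhat decomposition to get the delta and Kloosterman terms, computing spectrally by unfolding against the cusp forms and Eisenstein series, and then passing from test functions $\psi$ to an arbitrary admissible $h$ by Kontorovich--Lebedev/Sears--Titchmarsh inversion, identifying $\check h$ through the $J$-Bessel kernel --- is precisely the standard proof, and you correctly single out the inversion as the delicate step (the image of $\psi \mapsto \cosh(\pi t)\Phi_\psi(t)$ does not contain every admissible $h$ exactly, so one needs the Sears--Titchmarsh/Neumann-series argument, together with the Weyl law and $\alpha_j \ll t_j^{\varepsilon}$ for absolute convergence of the spectral sum).

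Two concrete slips in the sketch as written. First, comparing $n$-th Fourier coefficients of $P_m$ does not by itself ``match the right-hand side of the lemma'': both computations still produce functions of $y$ (the spectral side carries $\sqrt{y}\,K_{it_j}(2\pi n y)$, and the Kloosterman terms carry a $y$-dependent transform), so you must integrate out $y$ --- equivalently, pair with a second Poincar\'e series $P_n$, which is how the classical argument is actually run; this is bookkeeping, but the identity you state is not yet the lemma. Second, and more substantively, you attribute the diagonal term $\pi^{-2}\delta_{m=n}\int_{-\infty}^{\infty} r\tanh(\pi r)h(r)\,dr$ to the constant function in the spectral expansion. That is wrong: for $m \geq 1$ the constant eigenfunction pairs to zero against $P_m$ and has no nonzero $n$-th Fourier coefficient, so it contributes nothing. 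The diagonal term is the image, under the inversion relating $\psi$ to $h$, of the $c=0$ (identity coset) contribution $\delta_{m=n}\psi(y)$ that you already produced on the geometric side; the weight $r\tanh(\pi r)$ and the constant $\pi^{-2}$ come out of the Kontorovich--Lebedev Parseval/inversion formula, not from the residual spectrum. With these two corrections your outline is the standard, correct proof of the quoted formula.
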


\section{Initial cleaning}
Throughout this article it is very convenient to refer to functions $f$ satisfying the following bounds
\begin{equation}
\label{eq:fbound}
x^k f^{(k)}(x) \ll_{k,C} (1 + \frac{|x|}{Y})^{-C},
\end{equation}
for each $k, C \geq 0$, and some parameter $Y$.  

The following technical lemma gives a pointwise upper bound on an $L$-function with a weight function that only loosely depends on its parameters.
\begin{mylemma}
\label{lemma:cleaning}
 Let $\varepsilon > 0$, suppose that $T^{\varepsilon} \leq U \leq \Delta \leq T^{1-\varepsilon}$, and $|t| \leq U$, $T < t_j \leq T + \Delta$.
Then there exist finitely many functions $W_k$ independent of $t$ and $t_j$ (but depending on $U$ and $T$) satisfying \eqref{eq:fbound} with $Y = 1$,
such that for some fixed interval $[X_0, X_1] \subset (0, \infty)$, we have
\begin{equation}
\label{eq:LAFEUB}
 |L(u_j \times \phi, \thalf + it)|^2 \ll \sum_{k} \int_{X_0}^{X_1} \Big|\sum_n \frac{\lambda_{u_j \times \phi }(n)}{n^{\half + it}} W_k(n/(XT^3)) \Big|^2 dX + O(T^{-200}).
\end{equation}
\end{mylemma}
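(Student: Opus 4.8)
The plan is to insert the approximate functional equation, replace the weight occurring in it (which depends on $t$ and $t_j$) by one of finitely many fixed functions, and then add an artificial integration over $X$. First, apply \eqref{eq:AFE}: for every $X>0$ it writes $L(u_j\times\phi,\thalf+it)=M(X)+\epsilon_{t,t_j}D(X)$ with $M(X)=\sum_n \lambda_{u_j\times\phi}(n) n^{-\thalf-it} V_{\thalf+it}(n/X)$ and $D(X)=\sum_n \lambda_{u_j\times\widetilde\phi}(n) n^{-\thalf+it} V^{*}_{\thalf-it}(nX)$; since $|\epsilon_{t,t_j}|=1$ this gives $|L(u_j\times\phi,\thalf+it)|^2\ll|M(X)|^2+|D(X)|^2$ for every $X$. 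Fixing the symmetric interval $[X_0,X_1]=[\thalf,2]$ and averaging in $X$, it suffices to bound $\int_{X_0}^{X_1}(|M(X)|^2+|D(X)|^2)\,dX$. The dual sum is structurally the same as the main one: since $\lambda_{u_j\times\widetilde\phi}(n)=\overline{\lambda_{u_j\times\phi}(n)}$ (because $\lambda_j(n)\in\mr$ and $A(n,m)=\overline{A(m,n)}$), conjugating the square replaces $D(X)$ by $\sum_n\lambda_{u_j\times\phi}(n)n^{-\thalf-it}\overline{V^{*}_{\thalf-it}}(nX)$, and the substitution $X\mapsto 1/X$, which preserves $[\thalf,2]$, turns the argument $nX$ into $n/X$; moreover $\overline{V^{*}_{\thalf-it}}$ obeys the same size and smoothness estimates as $V_{\thalf+it}$. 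Thus we may work only with the main sum.

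Next we record the shape of $V_{\thalf+it}$. Applying Stirling's formula to the six $\Gamma$-factors defining $\gamma(s)$ in \eqref{eq:V}, and using that the hypothesis $U\le T^{1-\varepsilon}$ forces the analytic conductor of $L(u_j\times\phi,\thalf+it)$ to equal $T^6$ up to a bounded factor \emph{uniformly} for $|t|\le U$ and $T<t_j\le T+\Delta$ (the dependence on $t$ being of lower order), the weight $V_{\thalf+it}(y)$ satisfies \eqref{eq:fbound} with $Y\asymp T^3$, with implied constants depending only on $\phi$ and $\varepsilon$. Equivalently, $\mathcal V_{t,t_j}(x):=V_{\thalf+it}(xT^3)$ satisfies \eqref{eq:fbound} with $Y=1$, uniformly, and $M(X)=\sum_n\lambda_{u_j\times\phi}(n)n^{-\thalf-it}\,\mathcal V_{t,t_j}(n/(XT^3))$. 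Shifting the Mellin contour in \eqref{eq:V} slightly to the left of $\Re s=0$ (the $\Gamma$-ratio has its nearest pole at a bounded distance) shows in addition that $x^k\mathcal V_{t,t_j}^{(k)}(x)\to 0$ as $x\to 0^{+}$, uniformly in $t,t_j$. Hence the family $\{\mathcal V_{t,t_j}: |t|\le U,\ T<t_j\le T+\Delta\}$ depends continuously on $(t,t_j)$ over a compact box while staying bounded in every seminorm attached to \eqref{eq:fbound}; an Arzel\`a--Ascoli argument shows it is totally bounded in the metrizable topology those seminorms define. We may therefore choose finitely many sample parameters $(t^{(k)},t_j^{(k)})$ so that every $\mathcal V_{t,t_j}$ lies within $T^{-1000}$ of $W_k:=\mathcal V_{t^{(k)},t_j^{(k)}}$ in that topology; the $W_k$ are themselves of the form \eqref{eq:V} (rescaled), hence satisfy \eqref{eq:fbound} with $Y=1$, and they depend only on $T$ and $U$ (and on $\phi$, $\varepsilon$), not on the running parameters $t,t_j$.

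Finally, in $M(X)$ replace $\mathcal V_{t,t_j}$ by the nearby $W_k$. Since $|\mathcal V_{t,t_j}(x)-W_k(x)|\ll_C T^{-1000}(1+|x|)^{-C}$ for each $C$, and $\sum_{n\le N}|\lambda_{u_j\times\phi}(n)|^2\ll N^{1+\varepsilon}$ by Rankin--Selberg, the resulting error is $\ll T^{-1000}\sum_n|\lambda_{u_j\times\phi}(n)|n^{-\thalf}(1+n/(XT^3))^{-C}\ll T^{-1000}\cdot T^{3/2+\varepsilon}\ll T^{-900}$. Bounding the single relevant $W_k$ by the sum over all of them, $|M(X)|^2\ll\sum_k\big|\sum_n\lambda_{u_j\times\phi}(n)n^{-\thalf-it}W_k(n/(XT^3))\big|^2+O(T^{-1800})$, and the same holds for $|D(X)|^2$ after the reduction of the first paragraph (the rescaled weights $\overline{V^{*}_{\thalf-it}}(xT^3)$ contributing finitely many further $W_k$ to the list). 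Integrating over $X\in[X_0,X_1]$ and collecting the pieces gives \eqref{eq:LAFEUB}, since $T^{-1800}\ll T^{-200}$.

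The crux is the uniformity underlying the second paragraph: one must control $V_{\thalf+it}$ \emph{and all of its derivatives} by the bounds \eqref{eq:fbound} with implied constants that do not degenerate as $(t,t_j)$ moves through the (compact, but $T$-dependent) parameter region, so that a genuinely finite list of fixed weights suffices and $n/(XT^3)$ is the correct normalization throughout. This is precisely where $U\le T^{1-\varepsilon}$ enters: it keeps the conductor of size $T^6$ with absolute implied constants and reduces the $t$-dependence to a harmless perturbation. Were $|t|$ allowed to be of size $t_j$, the conductor would depend sharply on both parameters, the rescaling by $T^3$ would fail, and a different argument --- of the kind in the companion paper \cite{Y} --- would be required.
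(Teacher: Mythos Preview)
Your compactness argument does not deliver what the lemma needs. Arzel\`a--Ascoli tells you the family $\{\mathcal V_{t,t_j}\}$ is totally bounded, so for each $\delta>0$ it is covered by finitely many $\delta$-balls; but the covering number depends on $\delta$, and you are taking $\delta=T^{-1000}$. Concretely, the leading Stirling term of $\mathcal V_{t,t_j}(x)$ is $V_0(c_{t,t_j}x)$ with $c_{t,t_j}=\pi^3 T^3/|q(t,t_j)|^{1/2}$, and as $(t,t_j)$ ranges over your box the scaling factor $c_{t,t_j}$ sweeps out an interval of length $\asymp T^{-\varepsilon}$ (from $\Delta\le T^{1-\varepsilon}$). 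Two such rescalings differ in sup-norm by $\asymp|c_1-c_2|$, so covering this one-parameter family alone to precision $T^{-1000}$ already costs $\asymp T^{1000-\varepsilon}$ sample functions. The lemma is only useful if the number of $W_k$ is $O_\varepsilon(1)$: in the deduction of Corollary~\ref{coro:cleaning} one pulls $\sum_k\int_X$ outside and bounds by the supremum, picking up the cardinality of the index set as a multiplicative factor. With a $T$-dependent list the bound collapses.

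The paper's proof avoids this by \emph{not} trying to approximate the $(t,t_j)$-family at all. Instead it develops the Stirling expansion explicitly, so that $V_{\frac12+it}(y)$ is (up to a controllable tail) a finite sum of terms $|q(t,t_j)|^{s/2}R_{i,l}(s)t^i t_j^{-l}$ inside the Mellin integral. The change of variables $X\to X\,\pi^3 T^3/|q(t,t_j)|^{1/2}$ in the outer $X$-integral absorbs the conductor dependence exactly, turning the leading term into a fixed weight $V_0(n/(XT^3))$; positivity then lets one extend the shifted $X$-range to a fixed $[X_0,X_1]$. The lower-order terms, after writing $R_l(s,t)=\sum_i R_{i,l}(s)t^i$ and applying Cauchy, produce the finitely many $W_k=V_{i,l}$ with $i\le l<k$ and $k\asymp 1/\varepsilon$---genuinely $O_\varepsilon(1)$ functions, each independent of $t,t_j$. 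Your averaging over $X\in[\tfrac12,2]$ is present but inert; the missing idea is to spend that $X$-freedom to kill the $|q(t,t_j)|$-dependence rather than to hope compactness does it for you.
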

\begin{mycoro}
\label{coro:cleaning}
Let $\varepsilon > 0$, suppose that $T^{\varepsilon} \leq U \leq \Delta \leq T^{1-\varepsilon}$, and $|t| \leq U$, $T < t_j \leq T + \Delta$.  Let
\begin{equation}
 \mathcal{M}(T, U, \Delta) = \int_{-U}^{U} \sum_{\substack{T < t_j \leq T + \Delta \\ u_j \text{ even}}} |L(u_j \times \phi, \tfrac12 + i t)|^2 dt.
\end{equation}
Then there exists $W$ independent of $t$ and $t_j$ (but depending on $U$ and $T$) satisfying \eqref{eq:fbound} with $Y=1$ such that
\begin{equation}
\label{eq:MTUUB}
 \mathcal{M}(T, U, \Delta) \ll \int_{-U}^{U} \sum_{\substack{T < t_j \leq T + \Delta \\ u_j \text{ even}}} |\sum_n \frac{\lambda_{u_j \times \phi }(n)}{n^{\half + it}} W(n/T^3)|^2 dt + O(T^{-100}).
\end{equation}
\end{mycoro}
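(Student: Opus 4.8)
The plan is to deduce Corollary~\ref{coro:cleaning} directly from Lemma~\ref{lemma:cleaning}: substitute the pointwise bound \eqref{eq:LAFEUB} into the definition of $\mathcal{M}(T,U,\Delta)$, integrate in $t$ and sum over the spectrum, and then collapse the auxiliary integral over $X$ and the finite sum over $k$ into a single weight. Because the conclusion only asks for \emph{some} $W$ that may depend on $T,U$ (and $\Delta$), this last step costs essentially nothing.

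First I would integrate \eqref{eq:LAFEUB} over $|t|\le U$ and sum over even $u_j$ with $T<t_j\le T+\Delta$, freely interchanging the finite sum over $k$, the sum over $j$, and the two integrals. The error term contributes at most $\ll U\cdot\#\{j:T<t_j\le T+\Delta\}\cdot T^{-200}$, which by the Weyl law for $SL(2,\mz)$ (the spectral count in such a window being $\ll T^{2}$) together with $U,\Delta\le T^{1-\varepsilon}$ is $\ll T^{-100}$. This leaves
\[
\mathcal{M}(T,U,\Delta)\ll\sum_{k}\int_{X_0}^{X_1}f_k(X)\,dX+O(T^{-100}),\qquad
f_k(X):=\int_{-U}^{U}\sum_{\substack{T<t_j\le T+\Delta\\ u_j\text{ even}}}\Big|\sum_n\frac{\lambda_{u_j\times\phi}(n)}{n^{\half+it}}\,W_k\!\left(\frac{n}{XT^3}\right)\Big|^2dt.
\]
Each $f_k$ is a nonnegative continuous function on the fixed compact interval $[X_0,X_1]$ — the $n$-sum converges absolutely and locally uniformly in $X$ since $W_k$ decays faster than any polynomial while $\lambda_{u_j\times\phi}(n)\ll n^{1/2+\varepsilon}$, and the $j$-sum is finite — so $\int_{X_0}^{X_1}f_k(X)\,dX\le(X_1-X_0)f_k(X_k^*)$ for some maximiser $X_k^*$. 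As there are only finitely many $k$ and $X_1-X_0=O(1)$, choosing $(k^*,X^*)$ to maximise $f_k(X_k^*)$ over this finite family gives $\mathcal{M}(T,U,\Delta)\ll f_{k^*}(X^*)+O(T^{-100})$.

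It then remains to put $W(y):=W_{k^*}(y/X^*)$, so that $W(n/T^3)=W_{k^*}(n/(X^*T^3))$ and $f_{k^*}(X^*)$ is exactly the displayed main term of \eqref{eq:MTUUB}. This $W$ is independent of $t$ and $t_j$, since the pair $(k^*,X^*)$ depends only on $T,U,\Delta$ (and the fixed form $\phi$); and because $x^kW^{(k)}(x)=(x/X^*)^kW_{k^*}^{(k)}(x/X^*)$ with $X^*$ confined to the fixed interval $[X_0,X_1]\subset(0,\infty)$, the bound \eqref{eq:fbound} with $Y=1$ transfers from $W_{k^*}$ to $W$ after adjusting implied constants by a bounded power of $X_1$. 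There is no genuine obstacle in this argument; the only points needing a little care are the elementary estimation of the error term — where one uses $U,\Delta<T$ and the polynomial spectral count — and the (soft) observation that replacing each dilate $W_k(\,\cdot\,/X)$ by the single worst one is harmless precisely because $W$ is permitted to depend on the parameters $T$ and $U$.
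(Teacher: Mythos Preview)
Your proof is correct and follows essentially the same route as the paper: integrate and sum the pointwise bound from Lemma~\ref{lemma:cleaning}, move the finite $k$-sum and the $X$-integral to the outside, replace them by the supremum (so that the choice of $k^*,X^*$ depends only on $T,U,\Delta$ and not on $t,t_j$), and absorb the dilation by $X^*$ into a redefined weight $W$. Your write-up simply fills in the details the paper leaves implicit, namely the Weyl-law control of the error term and the verification that the rescaled weight still satisfies \eqref{eq:fbound} with $Y=1$.
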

To prove this Corollary, take the $k$-sum and $X$-integral outside the $t_j$-sum and $t$-integral, bound it by the supremum (if we do not do this the choices of $X$ and $k$ depend on $t, t_j$), and redefine $W$.

\begin{proof}[Proof of Lemma \ref{lemma:cleaning}]
We use a method similar to that of Section 5 of \cite{Y}, which we modified from \cite{DI}.

 Begin by writing the approximate functional equation \eqref{eq:AFE} in the shorthand form $L(u_j \times \phi, \thalf + it) = \sum_n a_n V(n/X) + \epsilon \sum_n b_n V^*(nX)$.  This is valid for all $X > 0$, so we freely integrate against $X^{-1} dX$ from $X = e^{-\half}$ to $X=e^{\half}$.  Then by Cauchy's inequality we deduce
\begin{equation}
\label{eq:LUB}
 |L(u_j \times \phi, \thalf + it)|^2 \leq 2 \int_{e^{-\half}}^{e^{\half}} |\sum_n a_n V(n/X)|^2 \frac{dX}{X} + 2 \int_{e^{-\half}}^{e^{\half}} |\sum_n b_n V^*(nX)|^2 \frac{dX}{X}.
\end{equation}
Notice that changing variables $X \rightarrow X^{-1}$ in the latter term makes the two terms more symmetric.  Indeed, the latter term then becomes the same as the former term with $\phi$ replaced by its dual and $t$ replaced by $-t$.  We focus on the former term.

We find a simpler expression for $V(x)$.
By Stirling's approximation, we have for $\lambda \in \mc$ fixed, $\text{Re}(s) > 0$, $s$ small compared to $T$, and with the shorthand $Z = \half + it \pm i t_j - \lambda$, that for certain polynomials $P_n$, we have
\begin{equation*}
\log \frac{\Gamma\left(\frac{Z+s}{2} \right)}{\Gamma\left(\frac{Z}{2} \right)}
= \frac{s}{2} \log(\frac{Z}{2}) + \frac{P_1(s)}{Z} + \frac{P_2(s)}{Z^2} + \dots + O\big(\frac{P_k(s)}{Z^k}\big),
\end{equation*}
With $Z' = \half + it \mp t_j - \lambda$, we have
\begin{equation*}
\half \log(\frac{Z}{2}) + \half \log(\frac{Z'}{2}) = \log |Z/2| + \half \log(1 + \frac{Z'-\overline{Z}}{\overline{Z}}).
\end{equation*}
A computation shows that
\begin{equation*}
 \half \log(1 + \frac{Z'-\overline{Z}}{\overline{Z}}) = \frac{Q_1(t)}{t_j} + \frac{Q_2(t)}{t_j^2} + \dots + O\big(\frac{Q_k(t)}{t_j^k}\big),
\end{equation*}
where each $Q_l$ is a polynomial of degree $l$.  Combining these estimates for the gamma factors, we obtain the asymptotic expansion
\begin{equation}
\label{eq:Vasymp}
 V(x) = \frac{1}{2 \pi i} \int_{(\sigma)} (\pi^3 x)^{-s} \frac{G(s)}{s} |q(t,t_j)|^{s/2} \Big(1 + \frac{R_1(s,t)}{t_j} + \dots + O\big(\frac{R_k(s,t)}{t_j^k} \big)\Big) ds,
\end{equation}
where each $R_l(s,t)$ is a polynomial in $s$ and $t$ of degree at most $l$ in terms of $t$, and $q(t,t_j)$ is the product of three $Z$ and three $Z'$ terms with $\lambda$ replaced by $\alpha, \beta, \gamma$.  Note $|q(t,t_j)| \asymp T^6$, uniformly in $t$ and $t_j$.
To be clear, we should choose a $G(s)$ that decays rapidly for $\text{Im}(s)$ large, such as $e^{s^2}$, truncate the $s$-integral at say $\log^2{T}$, apply Stirling's formula, and then relax the truncation, all with an acceptable error.

The representation \eqref{eq:Vasymp} with $\sigma > 0$ very large shows that we may truncate the sum over $n$ at $T^{3 + \varepsilon}$ with an acceptable error term.  With this truncation in place, we then fix $\sigma = 1$ and insert the asymptotic expansion \eqref{eq:Vasymp} into \eqref{eq:LUB}.  According to \eqref{eq:Vasymp}, write $V = V_0+ O(T^{3 \sigma} \leg{U}{T}^k)$ (so that $V_0$ accounts for all the terms in the expansion except for the error term).  
For $k$ sufficiently large in terms of $U$ and $T$, this error term is acceptable.  Then change variables $X \rightarrow X \pi^3 T^3/|q(t,t_j)|^{1/2}$ and by positivity extend the $X$-integral to a fixed interval, say $[X_0, X_1] \subset (0, \infty)$.  Now
\begin{multline*}
 |L(u_j \times \phi, \thalf + it)|^2 \ll \int_{X_0}^{X_1} \Big|\sum_n \frac{\lambda_{\phi \times u_j}(n)}{n^{\half + it}} \int_{(1)} \frac{G(s)}{s} \leg{X T^3}{n}^s (1 + 
\dots + \frac{R_{k-1}(s,t)}{t_j^{k-1}}) ds  \Big|^2 dX
\\
+ (\text{similar term}) 
+ O(T^{-200}).
\end{multline*}
Here the sum over $n$ is truncated at $T^{3 + \varepsilon}$, but since the contour can be shifted far to the right, we may relax this condition without making a new error term.  Next write $R_l(s,t) = R_{0,l}(s) + R_{1,l}(s) t + \dots + R_{l,l}(s) t^l$, and use Cauchy's inequality on the asymptotic expansion to obtain
\begin{equation*}
 |L(u_j \times \phi, \thalf + it)|^2 \ll \int_{X_0}^{X_1} \sum_{i \leq l < k} \frac{U^{2i}}{T^{2l}} \Big|\sum_n \frac{\lambda_{\phi \times u_j}(n)}{n^{\half + it}} V_{i,l}(n/(XT^3))\Big|^2 dX + (\text{similar}) + O(T^{-200}),
\end{equation*}
where
\begin{equation*}
 V_{i,l}(x) = \frac{1}{2 \pi i} \int_{(1)} \frac{G(s)}{s} R_{i,l}(s) x^{-s} ds.
\end{equation*}
Clearly each $V_{i,l}$ satisfies \eqref{eq:fbound} with $Y=1$.  The ``similar'' term has the same form as the displayed term, except the coefficients are conjugated and the weight function is different (the Langlands parameters $\alpha, \beta, \gamma$ are changed) yet it satisfies the same bounds (the proof was for any $\alpha, \beta, \gamma$ fixed), so again it takes the same form as the right hand side of \eqref{eq:LAFEUB}.  The total number of terms in the asymptotic expansion is finite.
\end{proof}

\begin{mytheo}
\label{thm:mainthm'}
 With conditions as in Lemma \ref{lemma:cleaning}, we have
\begin{equation}
 \mathcal{M}(T, U, \Delta) \ll T^{3 + \varepsilon}.
\end{equation}
\end{mytheo}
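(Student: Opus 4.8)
The plan is to combine the Kuznetsov formula (Lemma \ref{lemma:Kuznetsov}) with $GL(3)$ Voronoi summation and the hybrid large sieve of Lemma \ref{lemma:variantsieve}. By Corollary \ref{coro:cleaning} it is enough to bound
\[
\int_{-U}^{U}\sum_{\substack{T<t_j\leq T+\Delta\\ u_j\text{ even}}}\Big|\sum_{m,\ell}\frac{A(m,\ell)\lambda_j(\ell)}{(m^2\ell)^{1/2+it}}W\!\Big(\frac{m^2\ell}{T^3}\Big)\Big|^2 dt,
\]
using $\lambda_{u_j\times\phi}(n)=\sum_{m^2\ell=n}A(m,\ell)\lambda_j(\ell)$. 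I would first attach a smooth even majorant $h=h_{T,\Delta}$ satisfying the hypotheses of Lemma \ref{lemma:Kuznetsov}, with $h(r)\geq 1$ on $[T,T+\Delta]$ and concentrated on $|r|\asymp T$ at scale $\Delta$, and use $\alpha_j^{-1}\ll T^{\varepsilon}$ to pass to $T^{\varepsilon}\sum_j\alpha_j h(t_j)(\cdots)$. The restriction to even $u_j$ is obtained by averaging the formula of Lemma \ref{lemma:Kuznetsov} with its opposite-sign companion (Kloosterman sums $S(\ell_1,-\ell_2;c)$, no diagonal and no continuous spectrum), which are then handled exactly as the $S(\ell_1,\ell_2;c)$ terms. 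Opening the square produces $\sum_j\alpha_j\lambda_j(\ell_1)\lambda_j(\ell_2)h(t_j)$, and Lemma \ref{lemma:Kuznetsov} splits it into continuous, diagonal, and Kloosterman contributions.

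The continuous-spectrum term, reassembled over $\ell_1,\ell_2$ and integrated in $t$, equals $\frac1\pi\int\frac{h(r)}{|\zeta(1+2ir)|^2}\big|\sum_\ell d_\ell\,\sigma_{2ir}(\ell)\ell^{-ir}\big|^2 dr$ integrated in $t$ (one uses that $\sigma_{2ir}(\ell)\ell^{-ir}$ is real), hence is $\geq 0$ and can be dropped for an upper bound. The diagonal term $\delta_{\ell_1=\ell_2}$ carries $\frac1{\pi^2}\int r\tanh(\pi r)h(r)\,dr\asymp \Delta T$; inserting it and integrating $(m_1/m_2)^{-it}$ over $[-U,U]$, the $m_1=m_2$ part is $\ll \Delta TU\,T^{\varepsilon}$ (using $\sum_{m^2\ell\asymp T^3}|A(m,\ell)|^2/(m^2\ell)\ll T^{\varepsilon}$) while the $m_1\neq m_2$ part gains a factor $1/|\log(m_1/m_2)|$ but is still $\ll \Delta TU\,T^{\varepsilon}$ after Cauchy--Schwarz in the $A$'s; since $\Delta,U\leq T^{1-\varepsilon}$, this is $\ll T^{3-\varepsilon}$, which is acceptable.

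The crux is the Kloosterman term
\begin{multline*}
T^{\varepsilon}\int_{-U}^{U}\sum_{c}\frac1c\sum_{m_1,m_2}\sum_{\ell_1,\ell_2}\frac{A(m_1,\ell_1)\overline{A(m_2,\ell_2)}\,S(\ell_1,\pm\ell_2;c)}{(m_1^2\ell_1)^{1/2+it}(m_2^2\ell_2)^{1/2-it}}\\ \times\check h\!\Big(\frac{4\pi\sqrt{\ell_1\ell_2}}{c}\Big)W_1W_2\,dt,
\end{multline*}
with $W_i=W(m_i^2\ell_i/T^3)$. I would analyze $\check h$ via the uniform asymptotics of $J_{2ir}$: it restricts $c$ to a range around $\sqrt{\ell_1\ell_2}/T\asymp T^2/(m_1m_2)$ and supplies a stationary-phase expansion of $\check h$. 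Expanding $S(\ell_1,\pm\ell_2;c)=\sumstar_{d\bmod c}e\big(\tfrac{d\ell_1\pm\bar d\ell_2}{c}\big)$, I would apply the $GL(3)$ Voronoi summation formula to the $\ell_1$-sum against the weight $\ell_1^{-1/2-it}W_1\check h(4\pi\sqrt{\ell_1\ell_2}/c)$: this produces a dual sum over $n_1$ (of a length governed by $c$, $m_1$, and the phases of the $n^{it}$-twist and of $\check h$), with coefficients $A(n_2,n_1)$, a Kloosterman-type sum in $d\bmod c$ (to be evaluated together with the original $\sumstar_{d\bmod c}$), and a Hankel-type integral transform that remains in the class \eqref{eq:fbound}. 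After executing the $\sumstar_{d\bmod c}$ and rearranging the $c$-sum by reciprocity, the surviving sum over $\ell_2$ together with the Voronoi-dual variables, combined with the $t$-integral, has the shape $\sum_m b_m\,e(xm/b)\,e(tf(m))$ — the additive character from the residual arithmetic sum/reciprocity, the phase $f$ from the $n^{it}$-twist together with the Bessel stationary phase. Lemma \ref{lemma:variantsieve} then bounds this by $(B^2T+X)\sum_m|b_m|^2$ with $X=\sup 1/|f'|$; summing against the remaining sums over $m_1,m_2$, $c$, and the Voronoi duals, and balancing $B^2T$ against $X$, should give $\ll T^{3+\varepsilon}$, which together with the diagonal bound yields $\mathcal{M}(T,U,\Delta)\ll T^{3+\varepsilon}$.

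The main obstacle is precisely this last step: arranging the post-Voronoi expression so that Lemma \ref{lemma:variantsieve} applies essentially without loss. One must simultaneously keep the Voronoi dual sum short, keep the effective modulus $b$ small (so that $B^2T$ is controlled — note $c$ itself can be as large as $T^2$, so the reciprocity/arithmetic step must genuinely reduce the modulus), and — most delicately — verify that the phase $f$ coming from the $n^{it}$-twist does not degenerate, so that $X=\sup 1/|f'|$ is strictly smaller than the trivial length of the sum; only then does the $t$-integration contribute its full saving, and it is this that realizes "simultaneous cancellation in the Maass-form aspect (through the averaging over the Kloosterman moduli $c$) and the $t$-aspect." Since the conductor $T^{12}$ must be beaten all the way down to $T^{3}$ there is no slack — Voronoi, the arithmetic/reciprocity step, and the hybrid large sieve each have to be close to lossless — and a careful Diophantine analysis is needed to check that the relevant moduli and phases line up (in contrast with \cite{Y}, no Poisson summation in the residues modulo $c$ is needed). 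A secondary technical burden is to track that every weight function appearing along the way (from the approximate-functional-equation cleanup, from $\check h$, and from the Voronoi transform) stays in the class \eqref{eq:fbound} with controlled parameters, so that the repeated integrations by parts are uniform in all the variables.
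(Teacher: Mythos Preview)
Your overall architecture (Kuznetsov, drop the Eisenstein by positivity, bound the diagonal, then attack the Kloosterman term with $GL(3)$ Voronoi and the hybrid large sieve of Lemma~\ref{lemma:variantsieve}) matches the paper's, but there is a genuine gap at the crucial step, and it is exactly the one the paper flags as the ``key idea''.

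You propose to open $S(\ell_1,\pm\ell_2;c)$ and apply $GL(3)$ Voronoi in $\ell_1$ against the weight $\ell_1^{-1/2-it}W_1\,\check h(4\pi\sqrt{\ell_1\ell_2}/c)$. The problem is that $\check h(x)$ carries the phase $e(-2\sqrt{\ell_1\ell_2}/c)$ from its stationary-phase expansion (see \eqref{eq:hcheck}), and in the relevant range $\ell_1,\ell_2\asymp N$, $c\asymp N/(\Delta T)$ this phase oscillates at scale $\asymp \Delta T$ in $\ell_1$. Feeding that directly into Voronoi makes the dual variable of size $\approx T^6$ --- a catastrophic loss that no subsequent reciprocity or large sieve will recover. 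The paper states this explicitly at the start of Section~\ref{section:Diophantine}.

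What is missing is the Diophantine step that \emph{precedes} Voronoi: write
\[
-2\sqrt{mn} \;=\; -m - n + (\sqrt m - \sqrt n)^2,
\]
so that $e(-2\sqrt{mn}/c)=e((-m-n)/c)\cdot e((\sqrt m-\sqrt n)^2/c)$. The $t$-integral forces $|m-n|\ll N/U$, hence $(\sqrt m-\sqrt n)^2/c\ll N/(cU^2)$ is \emph{mild} and can be absorbed into the weight, while $e((-m-n)/c)$ is periodic and is merged with the Kloosterman sum via the identity \eqref{eq:Stwist}, producing the $ab=c$ decomposition with additive characters to modulus $b$ only. After separating variables (Section~\ref{section:separation}), one first applies Lemma~\ref{lemma:variantsieve} directly; this already wins for small $a$ (Lemma~\ref{lemma:KABsmall}). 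Only for large $a$ --- where the modulus $b$ is correspondingly small and the remaining phase is tame --- does one invoke $GL(3)$ Voronoi, and now the dual sum is short enough (see \eqref{eq:N1N2size}) that a second application of Lemma~\ref{lemma:variantsieve} closes the argument. Your ``reciprocity/arithmetic step must genuinely reduce the modulus'' is pointing at the right phenomenon, but the mechanism is this specific splitting of the Bessel phase, done \emph{before} any summation formula, not a reciprocity manoeuvre after Voronoi.
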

From Theorem \ref{thm:mainthm'}, we deduce Theorem \ref{thm:mainthm} by taking $U = T^{1-\varepsilon}$, and covering the interval $[T, 2T]$ with $O(T^{\varepsilon})$ subintervals of the form $[T', T' + \Delta]$ with $\Delta = T^{1-\varepsilon}$.

\begin{mylemma}
\label{lemma:cleaning2}
Let
\begin{equation}
\label{eq:h}
 h(r) = \frac{r^2 + \frac14}{T^2} \Big[\exp\big(-\big(\frac{r-T}{\Delta}\big)^2 \big) + \exp\big(-\big(\frac{r+T}{\Delta}\big)^2 \big)\Big].
\end{equation}
Then there exists a smooth function $w$ with support in a dyadic interval $[P, 2P]$ with 
\begin{equation}
P \ll T^{3 + \varepsilon}
\end{equation}
satisfying \eqref{eq:fbound} with $Y = P$
such that
\begin{equation*}
 \mathcal{M}(T, U, \Delta) \ll T^{\varepsilon} H(U, T, \Delta, w) + O(T^{-100}),
\end{equation*}
where
\begin{equation*}
 H(U, T, \Delta, w) = \int_{-U}^{U} \sum_{t_j} \alpha_j h(t_j) \Big| \sum_n \frac{\lambda_{u_j \times \phi}(n)}{n^{\half + it}} w(n) \Big|^2 dt.
\end{equation*}
\end{mylemma}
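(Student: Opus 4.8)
The plan is to reach a spectrally localized, single-dyadic-block form of the moment starting from the bound \eqref{eq:MTUUB} of Corollary~\ref{coro:cleaning}, via a short chain of elementary maneuvers, each losing at most a factor $T^{\varepsilon}$ and using only the nonnegativity of the spectral sum. First I would insert the Kuznetsov-adapted weight $h$ of \eqref{eq:h}. For $T < r \le T+\Delta$ with $\Delta \le T^{1-\varepsilon}$ one has $h(r) \asymp 1$: the prefactor $(r^2+\tfrac14)/T^2 \asymp 1$, and the first Gaussian has argument in $[-1,0]$, hence lies in $[e^{-1},1]$ (the second Gaussian is positive and harmless). Thus $\mathbf{1}_{T < t_j \le T+\Delta} \ll h(t_j)$. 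Also $\alpha_j^{-1} \ll t_j^{\varepsilon} \ll T^{\varepsilon}$ for $t_j$ in this range, by the bounds quoted after \eqref{eq:alphaj}, so $1 \ll T^{\varepsilon}\alpha_j h(t_j)$. Since the summand in \eqref{eq:MTUUB} is nonnegative, I may replace the sharp sum over even $t_j \in (T,T+\Delta]$ by $T^{\varepsilon}$ times the sum over \emph{all} $t_j$ with the weight $\alpha_j h(t_j)$, discarding the parity restriction by positivity as well. (The Gaussian tails of $h$ only enlarge the right-hand side, which is harmless here; their presence is exactly what renders $h$ an admissible test function for the Kuznetsov formula, Lemma~\ref{lemma:Kuznetsov}, which is where $h$ will be used next.)

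The second maneuver localizes the $n$-sum to one dyadic block. Fix a smooth dyadic partition of unity $1 = \sum_P \psi(n/P)$ with $\psi$ a fixed bump and $P$ running over powers of $2$, and set $w_P(n) = W(n/T^3)\,\psi(n/P)$, so that $W(n/T^3) = \sum_P w_P(n)$, each $w_P$ is supported in a dyadic interval $[P,2P]$, and each $w_P$ satisfies \eqref{eq:fbound} with $Y=P$ with absolute implied constants inherited from those of $W$. By the rapid decay of $W$, the blocks with $P \gg T^{3+\varepsilon}$ contribute $O(T^{-100})$ to $\mathcal{M}(T,U,\Delta)$: one bounds the inner $n$-sum trivially using $|\lambda_{u_j\times\phi}(n)| \ll n^{1/2+\varepsilon}$ together with $|w_P(n)| \ll_C (P/T^3)^{-C}$, and the surviving $t_j$-sum and $t$-integral by the Weyl law $\#\{t_j \le T\} \ll T^2$, $\alpha_j \ll T^{\varepsilon}$, and $U \le T$. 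For the remaining $O(\log T)$ values of $P \ll T^{3+\varepsilon}$, Cauchy's inequality (or simply passing to the largest term) shows the total is $\ll T^{\varepsilon}$ times $\int_{-U}^{U}\sum_{t_j}\alpha_j h(t_j)\big|\sum_n \frac{\lambda_{u_j\times\phi}(n)}{n^{\half+it}} w_P(n)\big|^2 dt$ for the worst such $P$; relabelling $w := w_P$ for this choice yields precisely $\mathcal{M}(T,U,\Delta) \ll T^{\varepsilon} H(U,T,\Delta,w) + O(T^{-100})$, as claimed.

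I do not expect a genuine obstacle here: the lemma is bookkeeping that prepares the moment for an application of Kuznetsov. The two points that require care — and on which the reduction being lossless up to $T^{\varepsilon}$ rests — are the lower bounds $h(r) \gg 1$ on $T < r \le T+\Delta$ (so that smoothing the sharp cutoff costs nothing) and $\alpha_j \gg t_j^{-\varepsilon}$ (so that inserting the spectral weight costs only $T^{\varepsilon}$); the only other thing to verify is that the discarded dyadic tail $P \gg T^{3+\varepsilon}$ is indeed $O(T^{-100})$, which is immediate from the rapid decay built into $W$ through \eqref{eq:fbound}.
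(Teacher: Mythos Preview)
Your proposal is correct and follows essentially the same route as the paper: start from \eqref{eq:MTUUB}, insert the spectral weight $\alpha_j h(t_j)$ by positivity and the lower bound $\alpha_j \gg t_j^{-\varepsilon}$, extend the $t_j$-sum, then break the $n$-sum into dyadic pieces and take the worst one via Cauchy. The only cosmetic difference is the order in which you insert $\alpha_j$ and $h$; the paper's proof is otherwise the same three-line reduction you describe.
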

\begin{proof}
We start with \eqref{eq:MTUUB}.
From the lower bound $\alpha_j \gg t_j^{-\varepsilon}$ we may attach the weight $\alpha_j$ at the cost of $O(T^{\varepsilon})$.  
By positivity, we freely attach the weight function $h$ and extend the summation to all $t_j$.  Finally, we apply a smooth dyadic partition of unity to the inner $n$ sum, and apply Cauchy's inequality to this sum over the partition (only the terms with $P \ll T^{3 + \varepsilon}$ are relevant by a trivial bound),
completing the proof.
\end{proof}

\begin{mylemma}
\label{lemma:HUB2}
Let $g$ be a fixed nonnegative, even, Schwartz function satisfying $g(t) \geq 1$ for $|t| \leq 2$ whose Fourier transform is compactly supported, and 
 define for any finite sequence of complex numbers $b_n$,
\begin{equation}
\label{eq:5.2}
 \mathcal{H}(U, T, \Delta ; b_n) = \int_{-\infty}^{\infty} g(t/U) \sum_{t_j} \alpha_j h(t_j) \Big| \sum_n \lambda_{j}(n) n^{-it} b_n \Big|^2 dt.
\end{equation}
Then for some $L$ with
\begin{equation}
 L \ll \sqrt{P},
\end{equation}
and with coefficients 
\begin{equation}
\label{eq:andef}
 a_{n,l} = n^{-\half} A(l,n) w_L(n),
\end{equation}
where $w_L(n)$ has support in $n \asymp P/L^2$ and satisfies \eqref{eq:fbound} with $Y= P/L^2$, we have
\begin{equation}
\label{eq:HUB}
 H(U, T, \Delta, w) \ll T^{\varepsilon} \sum_{l \asymp L} l^{-1} \mathcal{H}(U, T, \Delta ; a_{n,l}) + O(T^{-200}).
\end{equation}
\end{mylemma}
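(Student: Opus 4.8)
The plan is to unfold the Rankin--Selberg coefficient $\lambda_{u_j\times\phi}(N)=\sum_{m^2n=N}\lambda_j(n)A(m,n)$ (read off from \eqref{eq:Rankin}), peel off the ``square'' variable $m$ by a Cauchy--Schwarz inequality after a dyadic split, and recognize what is left as an instance of $\mathcal{H}$.

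First I would pass to a majorant integral: since $g$ is nonnegative and $g(t/U)\ge 1$ for $|t|\le U$, positivity gives $H(U,T,\Delta,w)\le \int_{-\infty}^{\infty}g(t/U)\sum_{t_j}\alpha_j h(t_j)\Big|\sum_N\lambda_{u_j\times\phi}(N)N^{-1/2-it}w(N)\Big|^2\,dt$. Substituting the unfolded coefficient, the inner sum becomes $\sum_m m^{-1-2it}\sum_n\lambda_j(n)A(m,n)n^{-1/2-it}w(m^2n)$. Since $w$ is supported in $[P,2P]$ we have $m^2\le 2P$, so I split the $m$-sum into $O(\log P)=O(T^\varepsilon)$ dyadic ranges $m\asymp L$, $L\ll\sqrt P$; on such a range $w(m^2n)$ confines $n$ to $n\asymp P/L^2$, and expanding $|\sum_L(\cdot)|^2$ loses only a factor $\ll\log P$.

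On each dyadic block, apply Cauchy--Schwarz in $m$: using $|m^{-1-2it}|=m^{-1}$ and $\sum_{m\asymp L}m^{-2}\asymp L^{-1}\asymp m^{-1}$ for $m\asymp L$, the square of the $m$-sum is $\ll \sum_{m\asymp L}m^{-1}\Big|\sum_n\lambda_j(n)\,n^{-it}\,[\,n^{-1/2}A(m,n)w(m^2n)\,]\Big|^2$. The bracketed quantity is independent of $t$, and writing $w_L(n):=w(m^2n)$ --- which is supported in $n\asymp P/L^2$ and, since $n^k\frac{d^k}{dn^k}w(m^2n)=(m^2n)^kw^{(k)}(m^2n)$, satisfies \eqref{eq:fbound} with $Y\asymp P/L^2$ --- it is precisely the coefficient $a_{n,m}$ of \eqref{eq:andef}. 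Integrating against $g(t/U)$ and summing over the spectrum against $\alpha_j h(t_j)$, then bounding the $O(T^\varepsilon)$ dyadic values of $L$ by the largest and absorbing the accumulated powers of $\log P$ into $T^\varepsilon$, yields $H(U,T,\Delta,w)\ll T^\varepsilon\sum_{m\asymp L}m^{-1}\mathcal{H}(U,T,\Delta;a_{n,m})+O(T^{-200})$, which is \eqref{eq:HUB} after renaming $m\to l$; the constraint $L\ll\sqrt P$ is forced by $m^2\le N\le 2P$, and $P\ll T^{3+\varepsilon}$ then gives $L\ll T^{3/2+\varepsilon}$.

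The argument is essentially bookkeeping, and the two points requiring care are the routing of the oscillatory factors --- the unimodular phase $m^{-2it}$ is swallowed by the Cauchy--Schwarz in $m$, whereas the essential factor $n^{-it}$ is kept intact so that the inner sum has exactly the shape $\sum_n\lambda_j(n)n^{-it}b_n$ required by $\mathcal{H}$ --- and the mild dependence of $w_L=w(m^2\,\cdot\,)$ on $m$ within the block $m\asymp L$. For the latter one may either allow $w_L$ to depend on $l$, which plays no role in the later analysis, or remove the dependence by expanding $w((m/L)^2 L^2 x)$ in a rapidly convergent Fourier series in $\log(m/L)\in[0,\log 2)$: the unimodular factors $(m/L)^{i\nu}$ are then absorbed into the Cauchy--Schwarz in $m$ and the rapidly decaying Fourier coefficients into $T^\varepsilon$.
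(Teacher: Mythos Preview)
Your approach is essentially the same as the paper's: unfold $\lambda_{u_j\times\phi}$ via \eqref{eq:Rankin}, split $l$ dyadically, apply Cauchy--Schwarz in $l$, and then deal with the residual $l$-dependence of the weight $w(l^2n)$. Your bookkeeping with the phase $m^{-2it}$ and the attachment of $g(t/U)$ by positivity is fine.

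The one point that deserves a correction is your option (b) for removing the $l$-dependence. Expanding $w((m/L)^2 L^2 n)$ in a Fourier series in $\log(m/L)\in[0,\log 2)$ does \emph{not} give rapidly decaying coefficients: the periodized function $s\mapsto w(e^{2s}L^2 n)$ is generically discontinuous at the endpoint (since $w(L^2 n)\neq w(4L^2 n)$ for generic $n$), so the coefficients only decay like $1/k$ and the extra $k$-sum is not absorbed into $T^\varepsilon$. The paper's device is Mellin inversion instead: one first multiplies by a redundant cutoff $w_1(n)$ that is identically $1$ on the union of the supports of $w(l^2 n)$, $l\asymp L$, and then writes
\[
w_1(n)\,w(l^2 n)=w_1(n)\,\frac{1}{2\pi}\int_{\mathbb{R}}\widetilde{w}(iy)(l^2 n)^{-iy}\,dy,
\]
truncating at $|y|\le T^\varepsilon$ by the rapid decay of $\widetilde{w}$. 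The unimodular factor $l^{-2iy}$ is then swallowed exactly as you swallow $m^{-2it}$, but the factor $n^{-iy}$ cannot be; it is instead combined with $n^{-it}$ and removed by the change of variables $t\to t-y$. This is why the paper postpones the attachment of $g(t/U)$ until after the Mellin step (so that one can enlarge the $t$-range by positivity after the shift). Your option (a), allowing $w_L$ to depend on $l$, would in fact suffice for the rest of the paper, but it does not prove the lemma as stated; if you want a self-contained fix along your lines, replace the discrete Fourier series by the Mellin integral above and absorb $n^{-iy}$ into the $t$-integration rather than into the weight.
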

\begin{proof}
We begin by writing $\lambda_{u_j \times \phi}(n)$ in terms of $\lambda_j(n)$ and $A(l,n)$ using \eqref{eq:Rankin}, and using Cauchy's inequality on the sum over $l$.
Then we break up the sum over $l$ into $O(\log{T})$ dyadic segments with $l \asymp L \ll \sqrt{P}$.

Considering the value of $L$ which maximizes the bound, this gives a bound of the form \eqref{eq:HUB} except that the weight function is of the form $w(l^2 n)$ which unfortunately depends on $l$.  We remove the $l$-dependence by multiplying $w(l^2 n)$ by $w_1(n)$, say, which satisfies \eqref{eq:fbound} with $Y = N/L^2$, is supported in an interval of the form $n \asymp Y$, and is identically one on the union of the supports of $w(l^2 n)$, for $l \asymp L$.  Then we separate variables with the Mellin technique, writing
\begin{equation*}
w_1(n) w(l^2 n)  = w_1(n) \frac{1}{2 \pi} \intR \widetilde{w}(iy) (l^2 n)^{-iy} dy.
\end{equation*}
By the rapid decay of $\widetilde{w}$, we may truncate the integral at $|y| \leq T^{\varepsilon} \leq U$ with an acceptable error.  Then we apply Cauchy-Schwarz to take the $y$-integral to the outside and change variables $t \rightarrow t-y$.  By positivity, we extend the $t$-integral to $|t| \leq 2U$, and integrate trivially over $y$.  Finally, by positivity we attach the weight function $g(t/U)$ and extend the integral to $\mr$.
\end{proof}
Set
\begin{equation}
\label{eq:N}
N = P/L^2,
\end{equation}
so that $w_L$ satisfies \eqref{eq:fbound} with $Y=N$.

With the above reductions, our goal for the rest of this paper is to prove the following.
\begin{mytheo}
\label{thm:mainthm''}
 With $a_{n,l}$ given by \eqref{eq:andef}, we have
\begin{equation}
\label{eq:4.4}
 \sum_{l \asymp L} l^{-1} \mathcal{H}(U, T, \Delta ; a_{n,l}) \ll T^{3 + \varepsilon} \sum_{l^2 n \ll T^{3 + \varepsilon}} \frac{|A(l,n)|^2}{ln}.
\end{equation}
\end{mytheo}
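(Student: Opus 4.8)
The plan is to open the square in $\mathcal{H}(U,T,\Delta;a_{n,l})$, recognise the resulting spectral sum as the left side of the Kuznetsov formula (Lemma \ref{lemma:Kuznetsov}), and then bound the arithmetic (Kloosterman) side, the long $t$-integral being handled by the oscillatory large sieve of Lemma \ref{lemma:variantsieve}. Expanding the square,
\begin{equation*}
\mathcal{H}(U,T,\Delta;a_{n,l}) = \int_{-\infty}^{\infty} g(t/U) \sum_{m,n} a_{m,l}\overline{a_{n,l}}\leg{n}{m}^{it} \Big(\sum_j \alpha_j \lambda_j(m)\lambda_j(n) h(t_j)\Big) dt,
\end{equation*}
and $h$ as in \eqref{eq:h} is even, entire, and of rapid decay, hence admissible in Lemma \ref{lemma:Kuznetsov}. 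The continuous-spectrum term there sits on the same side as the cuspidal sum, and since the numbers $\sigma_{2ir}(n) n^{-ir}$ are real it contributes a nonnegative amount to the corresponding bilinear form in the $a_{n,l}$; by positivity we discard it, leaving $\mathcal{H} \ll \mathcal{H}_{\mathrm{diag}} + |\mathcal{H}_{\mathrm{off}}|$, coming respectively from the $\delta_{m=n}$ term and the Kloosterman term of Lemma \ref{lemma:Kuznetsov}.

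The diagonal is immediate: $\mathcal{H}_{\mathrm{diag}} = \pi^{-2}\big(\int_{\mathbb{R}} r\tanh(\pi r) h(r)\,dr\big)\big(\int_{\mathbb{R}} g(t/U)\,dt\big)\sum_n |a_{n,l}|^2$. Since $h \asymp 1$ on $|r\mp T|\ll\Delta$ and is otherwise of rapid decay, the first integral is $\asymp T\Delta$, and the second is $\asymp U$. Hence $\mathcal{H}_{\mathrm{diag}} \ll T\Delta U\sum_n|a_{n,l}|^2 \ll T^{3-\varepsilon}\sum_n|a_{n,l}|^2$ by $U\le\Delta\le T^{1-\varepsilon}$. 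As $a_{n,l}=n^{-\half}A(l,n)w_L(n)$ with $w_L$ supported on $n\asymp N=P/L^2$ and $L^2N=P\ll T^{3+\varepsilon}$, summing against $l^{-1}$ over $l\asymp L$ bounds this part by $\ll T^{3+\varepsilon}\sum_{l^2n\ll T^{3+\varepsilon}}|A(l,n)|^2/(ln)$, which is admissible for \eqref{eq:4.4}.

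For the off-diagonal,
\begin{equation*}
\mathcal{H}_{\mathrm{off}} = \int_{\mathbb{R}} g(t/U)\sum_{m,n} a_{m,l}\overline{a_{n,l}}\leg{n}{m}^{it}\sum_{c\ge1}\frac{S(m,n;c)}{c}\,\check{h}\leg{4\pi\sqrt{mn}}{c}\,dt .
\end{equation*}
Using the uniform asymptotics of $J_{2ir}$ on the support of $h$, a stationary-phase analysis shows $\check{h}(x)$ is negligible for $x\ll T^{1-\varepsilon}$; thus the $c$-sum is effectively confined to $c\ll \sqrt{mn}/T^{1-\varepsilon}\asymp N/T^{1-\varepsilon}$ (so $\mathcal{H}_{\mathrm{off}}$ is negligible unless $N\gg T^{1-\varepsilon}$), and on the surviving range $\check{h}$ has an explicit oscillatory form of size $\ll T^{1/2}\Delta$. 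Separating the variables $m,n$ in $\check{h}$ by a Mellin transform (truncated at height $\ll T^\varepsilon$), one confronts twisted $GL(3)$ exponential sums $\sum_m A(l,m)m^{it-s/2}w_L(m)\e{am}{c}$ and their conjugates; to these I would apply the $GL(3)$ Voronoi summation formula in the form used by X.\ Li \cite{Li} (see also \cite{Goldfeld}), turning each into a dual sum over the $GL(3)$ coefficients against a Bessel-type kernel, and combining the residues $a\bmod c$ left from $S(m,n;c)$ into Ramanujan-type sums.

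The hard part is then the resulting multiple sum over the modulus $c$, the dual summation variables, and $t$: one must produce cancellation \emph{simultaneously} in the arithmetic sums over $c$ and in the $t$-integral carrying the oscillation inherited from $n^{it}$ --- precisely the mechanism of Lemma \ref{lemma:variantsieve}, now with $f$ the phase function output by the Voronoi transform, so that $X$ is the length of the dual sum and $B$ is the size of the relevant modulus. It is essential that the coefficients are the arithmetically structured $a_{n,l}$ of \eqref{eq:andef} rather than arbitrary: a general-coefficient version would amount to \eqref{eq:hybridspectral} with the stronger $\Delta(N,T,U)=N+UT^2$, which is essentially as strong as the Ramanujan--Petersson conjecture for Maass forms and out of reach. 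Correspondingly, a crude treatment --- bounding $\check{h}$ by its supremum and feeding $\sum_m a_{m,l}m^{it}\e{am}{c}$ straight into Lemma \ref{lemma:variantsieve} --- loses too much (it yields a bound of size $\asymp T^{9/2}$ in the worst range $N\asymp T^3$, attained when $L\asymp1$), so the oscillation of $\check{h}$ and the shortening afforded by $GL(3)$ Voronoi must both be exploited. Granting the resulting estimate on the post-Voronoi sum, one collects the diagonal and off-diagonal contributions and sums over the $O(T^\varepsilon)$ dyadic parameters (the value of $L$, the ranges of $c$, the Mellin height) to reach \eqref{eq:4.4}.
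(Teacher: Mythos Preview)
Your opening moves are right: Kuznetsov, positivity to drop the Eisenstein part, and the diagonal bound are exactly as in the paper. The gap is in your treatment of $\check{h}$.

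You propose to separate $m,n$ in $\check{h}(4\pi\sqrt{mn}/c)$ by a Mellin transform ``truncated at height $\ll T^{\varepsilon}$'', after which the $m$-sum becomes $\sum_m A(l,m) m^{it-s/2} w_L(m) e(am/c)$. This step fails. By the Jutila--Motohashi expansion (see \eqref{eq:hcheck}), on the surviving range $\check{h}(x)$ carries the phase $\cos(x - 2T^2 x^{-1} + \pi/4)$ with $x = 4\pi\sqrt{mn}/c$. The term $e(\pm x) = e(\pm 4\pi\sqrt{mn}/c)$ has amplitude $\asymp N/c \gg \Delta T = T^{2-\varepsilon}$, so the Mellin transform of $\check{h}$ in either variable is \emph{not} essentially supported on $|s| \ll T^{\varepsilon}$; it lives at height $\asymp N/c$. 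Equivalently, your claimed post-separation sum simply omits the factor $e(\pm 2\sqrt{mn}/c)$, which is the dominant oscillation.

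If instead you keep that phase in the test function and apply $GL(3)$ Voronoi directly, the stationary point of the transform $\Psi_0$ is pushed out and the dual variable has size $n_1^2 n_2 \asymp N^2 l$, i.e.\ $\asymp T^{6}$ in the worst case --- a catastrophic loss, exactly as the paper warns at the start of Section~\ref{section:Diophantine}. The paper's remedy is the key idea you are missing: use that the $t$-integral forces $|m-n| \ll N/U$ and write
\[
-2\sqrt{mn} \;=\; -m - n + (\sqrt{m}-\sqrt{n})^2,
\]
so that $e(-2\sqrt{mn}/c) = e((-m-n)/c)\,e((\sqrt{m}-\sqrt{n})^2/c)$. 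The first factor is periodic and is absorbed into the Kloosterman sum via the identity \eqref{eq:Stwist}, producing the $ab=c$ decomposition and the new modulus $b$; the second factor has amplitude $\ll N/(U^2 c)$ and goes harmlessly into the weight. Only after this ``Diophantine'' step does one separate variables (Section~\ref{section:separation}), apply the large sieve for small $A$ (Lemma~\ref{lemma:KABsmall}), and for large $A$ apply Voronoi to $\sum_m A(l,m) e(\overline{x} m / b) \cdot(\text{mild phase})$ with a dual sum of manageable length \eqref{eq:N1N2size}, followed by a second application of Lemma~\ref{lemma:variantsieve}. Your outline has the right ingredients (Kuznetsov, Voronoi, the hybrid large sieve) but applies them in an order that cannot work without this intermediate dampening of the $\sqrt{mn}$-phase.
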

We now briefly explain how Theorem \ref{thm:mainthm''} implies Theorem \ref{thm:mainthm'} (and hence Theorem \ref{thm:mainthm}).

Using the polynomial growth of the Rankin-Selberg convolution $L(\phi \times \phi, s)$, one can show that (see Remark 12.1.8 of \cite{Goldfeld})
\begin{equation*}
 \sum_{l^2 n \leq x} |A(l,n)|^2 \ll x.
\end{equation*}
Actually, if $\phi$ is a Hecke eigenform then one can include more terms and obtain
\begin{equation}
\label{eq:lnA}
 \sum_{ln \leq x} |A(l,n)|^2 \ll x^{1 + \varepsilon},
\end{equation}
which we explain now.  The Hecke relations for $\phi$ imply
\begin{equation*}
 A(l,n) = \sum_{d | (l,n)} \mu(d) A(l/d, 1) A(1,n/d).
\end{equation*}
Inserting this into the left hand side of \eqref{eq:lnA}, applying Cauchy's inequality to the sum over $d$, and using the standard divisor function bound, we quickly obtain \eqref{eq:lnA}.

One of the basic techniques used throughout this paper is to apply an asymptotic expansion to a particular quantity and reduce the estimation of the entire quantity to that of the leading-order term, as the lower-order terms have all the essential characteristics of the main term yet are of smaller magnitude.

For the rest of the paper we fix
\begin{equation}
\label{eq:Delta}
\Delta = T^{1-\varepsilon}.
\end{equation}

\section{Applying the Kuznetsov formula}
\begin{mylemma}
\label{lemma:K0}
 Let $K_0(U, T, \Delta; b_{n,l})$ denote a sum of the form
\begin{multline}
\label{eq:K0}
 K_{0} = \frac{\Delta T}{\sqrt{N}} \intR g(t/U) \sum_{m,n} b_{m,l} \overline{b_{n,l}} \leg{m}{n}^{-it} 
\\
\sum_{c\leq \frac{N T^{\varepsilon}}{\Delta T}} \frac{S(m,n;c)}{\sqrt{c}} \e{- 2 \sqrt{mn}}{c} e^{\frac{ i T^2 c}{2 \pi \sqrt{mn}}} w_1\leg{c \Delta T}{\sqrt{mn}},
\end{multline}
where $g$ is as in Lemma \ref{lemma:HUB2},
\begin{equation}
\label{eq:bnl}
 b_{n,l} = \frac{A(l,n)}{\sqrt{n}} w(n),
\end{equation}
where $w$ has support in the dyadic interval $[N, 2N]$ and satisfies \eqref{eq:fbound} with $Y = N$, and $w_1$ is a smooth function on $\mr^{+}$ satisfying \eqref{eq:fbound} with $Y=1$.
If for all $L \ll T^{3/2 + \varepsilon}$ we have
\begin{equation}
\label{eq:K0bound}
\sum_{l \asymp L} l^{-1} | K_0(U, T, \Delta; b_{n,l})| \ll T^{3 + \varepsilon} \sum_{l^2 n \ll T^{3 + \varepsilon}} \frac{|A(l,n)|^2}{ln},
\end{equation}
then Theorem \ref{thm:mainthm''} holds.
\end{mylemma}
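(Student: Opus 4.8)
The plan is to open the square in $\mathcal{H}(U,T,\Delta;a_{n,l})$, so that for each $t$ the spectral sum becomes $\sum_{m,n} a_{m,l}\overline{a_{n,l}} \leg{m}{n}^{-it} \sum_j \alpha_j \lambda_j(m)\lambda_j(n) h(t_j)$, and then apply the Kuznetsov formula (Lemma \ref{lemma:Kuznetsov}). One first checks that the $h$ of \eqref{eq:h} is admissible: it is entire and even, and since $\Delta = T^{1-\varepsilon}\to\infty$ its Gaussian factors decay faster than any polynomial throughout a fixed strip $|\operatorname{Im}(r)|\le\tfrac12+\delta$. Kuznetsov replaces $\sum_j \alpha_j \lambda_j(m)\lambda_j(n) h(t_j)$ by a diagonal term, minus a continuous-spectrum term, plus $\sum_c \frac{S(m,n;c)}{c}\check{h}\leg{4\pi\sqrt{mn}}{c}$. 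For fixed $t$ the continuous-spectrum contribution to the spectral sum equals $\tfrac1\pi \int h(r) |\zeta(1+2ir)|^{-2} \big| \sum_n a_{n,l}\sigma_{2ir}(n) n^{-ir-it} \big|^2 dr$ (using $\overline{\sigma_{2ir}(n)n^{-ir}} = \sigma_{2ir}(n)n^{-ir}$), so after weighting by $g(t/U)\ge 0$ and integrating it is nonnegative and may be discarded. Since $\mathcal{H}(U,T,\Delta;a_{n,l})\ge 0$ too, we obtain $\mathcal{H}(U,T,\Delta;a_{n,l}) \le (\text{diagonal contribution}) + |(\text{Kloosterman contribution})|$, and it remains to bound each after applying $\sum_{l\asymp L} l^{-1}$.

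For the diagonal, the Kuznetsov main term contributes $\pi^{-2}\delta_{m=n}\int_{-\infty}^{\infty} r\tanh(\pi r)h(r)\,dr \asymp T\Delta$, since $h$ localizes $|r|\asymp T$ in a window of width $\Delta$. Hence the diagonal part of $\sum_{l\asymp L}l^{-1}\mathcal{H}$ is $\ll T\Delta\cdot\big(\int g(t/U)\,dt\big)\cdot\sum_{l\asymp L}l^{-1}\sum_n |a_{n,l}|^2 \ll T\Delta U\sum_{l\asymp L}l^{-1}\sum_{n\asymp N}n^{-1}|A(l,n)|^2$. As $\Delta = T^{1-\varepsilon}$ and $U\le T^{1-\varepsilon}$ we have $T\Delta U \le T^{3-2\varepsilon}$, and on the support $l^2 n\asymp L^2 N = P \ll T^{3+\varepsilon}$, so this is $\ll T^{3+\varepsilon}\sum_{l^2 n\ll T^{3+\varepsilon}}\frac{|A(l,n)|^2}{ln}$, of the desired size.

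The heart of the matter is the analysis of $\check{h}(x)$ for $x = 4\pi\sqrt{mn}/c\asymp N/c$. Insert the uniform asymptotic expansion of $J_{2ir}(x)$; on the half-line $r>0$, where the integrand is not already negligible against $\cosh(\pi r)$, the relevant phase is $\Phi(x,r) = \sqrt{x^2+4r^2} - 2r\operatorname{arcsinh}(2r/x)$, for which a short computation gives $\partial_r\Phi(x,r) = -2\operatorname{arcsinh}(2r/x)$, never zero. So there is no stationary point, and integrating by parts in $r$ against the width-$\Delta$ weight $h$ gives $\check{h}(x)\ll_A T^{-A}$ unless $\Delta|\partial_r\Phi|\ll T^\varepsilon$; since $|\partial_r\Phi|\asymp r/x\asymp T/x$ in the surviving range $x\gg T$, this forces $x\gg T\Delta T^{-\varepsilon}$, i.e.\ $c\ll NT^\varepsilon/(\Delta T)$ --- exactly the truncation in \eqref{eq:K0}. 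On that range, Taylor-expanding $\Phi$, the Bessel amplitude, and $rh(r)$ about $r=T$ and performing the (stationary-point-free, convergent Gaussian) $r$-integral yields $\check{h}(x) = c_{\pm}\,\frac{T\Delta}{\sqrt{x}}\,e^{\pm i(x - 2T^2/x)}\,w_1\leg{c\Delta T}{\sqrt{mn}} + (\text{lower order})$, with $w_1$ smooth, bounded, and rapidly decaying in its argument (the $e^{-4T^2\Delta^2/x^2}$-type cutoff). Combining $\frac1c\check{h}$ with $x^{-1/2}\asymp\sqrt{c}\,(mn)^{-1/4}$ and $(mn)^{1/4}\asymp\sqrt{N}$, and noting that $a_{n,l}$ is of the form $b_{n,l}$ in \eqref{eq:bnl} (with $w = w_L$, after a trivial dyadic split), each term is a sum of the form \eqref{eq:K0}: the leading phase $e^{-ix} = \e{-2\sqrt{mn}}{c}$ with companion $e^{+2iT^2/x} = e^{iT^2 c/(2\pi\sqrt{mn})}$ gives $K_0$ itself, and the opposite sign gives a term whose complex conjugate (swap $m\leftrightarrow n$, use $S(m,n;c)=S(n,m;c)$) is again of the form \eqref{eq:K0} with equal modulus. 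The lower-order terms, from the Bessel expansion and the Taylor expansions, are smaller by powers of $T^{-\varepsilon}$ but share all these features, so by the recurring principle that an asymptotic expansion reduces to its leading-order term they are again sums of the form \eqref{eq:K0}; there are $O_\varepsilon(1)$ of them, and the remaining error $O(T^{-A})$ is absorbed since $\sum_{l^2 n\ll T^{3+\varepsilon}}|A(l,n)|^2/(ln)\ge 1$. Thus the Kloosterman contribution is $\ll \sum_{l\asymp L}l^{-1}|K_0(U,T,\Delta;b_{n,l})| + O(T^{-A})$ over finitely many $K_0$'s, and since $L\ll\sqrt{P}\ll T^{3/2+\varepsilon}$ the hypothesis \eqref{eq:K0bound} completes the argument.

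The main obstacle is this Bessel-transform analysis: making the integration-by-parts and stationary-phase estimates for $\check{h}(x)$ uniform over the whole range of $x$ (in particular handling the transitional zones $x\asymp T$ and $x\asymp T\Delta$ without loss), tracking the lower-order terms, and verifying that each piece genuinely matches the template \eqref{eq:K0}. By comparison, opening the square, invoking Kuznetsov, discarding the continuous spectrum, and the diagonal bound are routine.
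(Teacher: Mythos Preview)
Your proposal is correct and follows essentially the same route as the paper: apply Kuznetsov, discard the nonnegative Eisenstein piece, bound the diagonal by $U\Delta T\sum_n |a_{n,l}|^2/n$, and reduce the Kloosterman piece to finitely many $K_0$'s via an asymptotic expansion of $\check{h}(x)$ together with the $\pm$ symmetry. The one difference is that the paper does not derive the Bessel-transform asymptotic itself but simply quotes the expansion (3.19) of Jutila--Motohashi \cite{JM}, which gives exactly your leading term $\frac{4}{\pi}\sqrt{2/x}\,\Delta T\,\exp(-(2\Delta T/x)^2)\cos(x-2T^2/x+\pi/4)$ with the exponential cutoff producing the truncation $c\le NT^{\varepsilon}/(\Delta T)$; so your self-identified ``main obstacle'' is precisely the step the paper outsources to \cite{JM}.
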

Our strategy of proof for Theorem \ref{thm:mainthm''} is thus to show \eqref{eq:K0bound} holds.

\begin{proof}
Our first step in estimating $\mathcal{H}$ is to apply Lemma \ref{lemma:Kuznetsov}.
It is a somewhat involved task to analyze the integral transform $\check{h}$, but Jutila and Motohashi \cite{JM} have obtained a precise asymptotic expansion of $\check{h}$ for the particular choice \eqref{eq:h}.
By (3.19) of \cite{JM}, we obtain an asymptotic expansion for $\check{h}(x)$ with leading term
\begin{equation}
\label{eq:hcheck}
\frac{4}{\pi} \sqrt{\frac{2}{x}} \Delta T  \exp\left(-\leg{2 \Delta T}{x}^2 \right) \cos\left(x - 2 T^2 x^{-1} +\frac{\pi}{4} \right).
\end{equation}
Actually they wrote the expansion in terms of a critical point $u_0$ solving $\sinh u_0 = \frac{2T}{x}$, which has the expansion $u_0 = \frac{2T}{x} + O(\leg{T}{x}^3)$; in fact, $u_0$ is holomorphic in terms of $T/x$.  Strictly speaking, the asymptotic is of the form $\cos(x - 2 T^2 x^{-1} + \frac{\pi}{4} + O(T^4 x^{-3}))$, which can be expanded into power series with leading term \eqref{eq:hcheck} provided $\Delta \geq T^{1/3 + \varepsilon}$.


Applying the Kuznetsov formula gives that
\begin{equation}
\label{eq:Kl}
 \mathcal{H}(U, T, \Delta;a_{n,l})  + (\text{Eisenstein}) = D + K(U, T, \Delta; a_{n,l}),
\end{equation}
say, where $D$ corresponds to the diagonal term and $K$ is the sum of Kloosterman sums.  The Eisenstein contribution is nonnegative and can be discarded for purposes of estimation of $\mathcal{H}$.  An easy computation gives
\begin{equation*}
 D \ll U \Delta T^{} \sum_{n \ll N} \frac{|A(l,n)|^2}{n},
\end{equation*}
which is sufficient for the goal of \eqref{eq:4.4}.

Inserting the asymptotic expansion for $\check{h}(x)$, noting that the exponential decay in \eqref{eq:hcheck} naturally allows the truncation
\begin{equation}
\label{eq:cbound}
 c \leq \frac{N T^{\varepsilon}}{\Delta T},
\end{equation}
and writing $2\cos(y) = e^{iy} + e^{-iy}$, we obtain an analogous asymptotic expansion for $K$ of the form
\begin{equation*}
 K = K_1 + K_{-1} + \dots + K_r  + K_{-r} + O(T^{-200}),
\end{equation*}
for some absolute constant $r$, 
where each $K_{i}$ is of the form \eqref{eq:K0}, and by a simple symmetry argument each $|K_{-i}| = |K_{i}|$.
In fact, the lower-order terms would have a weight function $f$ that is smaller by a certain power of $T$, but it only complicates the notation to include this behavior.
Thus Theorem \ref{thm:mainthm''} follows from \eqref{eq:K0bound}.
\end{proof}

\section{Diophantine approximation}
\label{section:Diophantine}
The extreme oscillation of the term $\e{-2 \sqrt{mn}}{c}$ is a source of difficulty in exploiting cancellation in the sum over $m$.  For instance, an application of the $GL(3)$ Voronoi formula would lead to a sum where the dual variable has size $\approx T^6$ (for ``typical'' choices of the parameters), which is a catastrophic loss (though there turns out to be a gain in the simplicity of the arithmetical properties of the new sum).  It seems necessary to somehow dampen the oscillations of this exponential.  To do so, note that the $t$-integral forces $m$ and $n$ to be close: essentially $m = n(1 + O(U^{-1}))$, so that the identity $-2 \sqrt{mn} = -m -n + (\sqrt{m}-\sqrt{n})^2$ implies a close approximation (note that $(\sqrt{m}-\sqrt{n})^2 \ll (m-n)^2/N \ll N U^{-2}$).  Although $\e{-m-n}{c}$ is just as oscillatory as $\e{-2\sqrt{mn}}{c}$ (meaning the arguments of the exponential are of the same order of magnitude), it has the property of being periodic in $m$ and $n$ modulo $c$ so that one can treat this term arithmetically and absorb the ``remainder'' $\e{(\sqrt{m} - \sqrt{n})^2}{c}$ into the weight function, which is much less oscillatory.  This is a key idea in this paper.  The papers \cite{CI} and \cite{Li} also found arithmetical features of this phase but only after applying a summation formula.

Another pleasant feature of $\e{-m-n}{c}$ is that $m$ and $n$ are naturally separated.  However, this ``twisting'' of the Kloosterman sum by the exponential $\e{-m-n}{c}$ has the side effect of creating terms of the form $\e{(h-1)m}{c}$ where $h$ is coprime to $c$, so that it does not always hold that $h-1$ is coprime to $c$, thus making the use of the large sieve inequalities or Voronoi summation problematic.  Naturally one can factor out the greatest common divisor of $h-1$ and $c$ to proceed further; that is the content of the following
\begin{mylemma}  For all integers $m$, $n$, and positive integers $c$, we have
 \begin{equation}
\label{eq:Stwist}
  S(m,n;c) \e{-m-n}{c} = \sum_{ab =c} \sum_{\substack{x \shortmod{b} \\ (x(x+a),b)=1}} \e{\overline{x} m - (\overline{x+a})n}{b}.
 \end{equation}
\end{mylemma}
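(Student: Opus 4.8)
The plan is to open the Kloosterman sum and regroup the terms according to the greatest common divisor of $d-1$ with $c$. Writing $S(m,n;c) = \sum_{\substack{d \shortmod{c} \\ (d,c)=1}} \e{dm + \overline{d}n}{c}$ and multiplying by $\e{-m-n}{c}$, the left side of \eqref{eq:Stwist} becomes
\begin{equation*}
 \sum_{\substack{d \shortmod{c} \\ (d,c)=1}} \e{(d-1)m + (\overline{d}-1)n}{c}.
\end{equation*}
The first point is to use $d \overline{d} \equiv 1 \shortmod{c}$ to rewrite $\overline{d} - 1 \equiv -\overline{d}(d-1) \shortmod{c}$; this is what allows the common divisor of $d-1$ and $c$ to be pulled out of both numerators at once.

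Next I would set $a := (d-1, c)$, write $c = ab$, and parametrize $d = 1 + ae$ with $e$ running over the residues modulo $b$. Under this substitution the condition $(d,c) = 1$ is equivalent to $(1+ae, b) = 1$, and the condition $(d-1, c) = a$ (exactly) is equivalent to $(e, b) = 1$. Furthermore $\frac{(d-1)m}{c} = \frac{em}{b}$, while $\e{(\overline{d} - 1)n}{c} = \e{-\overline{d}\, e n}{b}$, where now $\overline{d}$ denotes the inverse of $d = 1 + ae$ modulo $b$. Thus the right side has become
\begin{equation*}
 \sum_{ab = c} \ \sum_{\substack{e \shortmod{b} \\ (e,b) = 1, \ (1+ae,b)=1}} \e{em - \overline{(1+ae)}\, e n}{b}.
\end{equation*}

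The final step is the change of variable $x \equiv \overline{e} \shortmod{b}$, permissible since $(e,b)=1$, so that $e \equiv \overline{x}$ and $1 + ae \equiv \overline{x}(x + a) \shortmod{b}$; consequently $\overline{(1+ae)}\, e \equiv \overline{(x+a)} \shortmod{b}$ provided $(x+a, b) = 1$, and the two conditions $(e,b)=1$ and $(1+ae,b)=1$ merge into the single condition $(x(x+a), b) = 1$. This yields the asserted identity. The whole argument is elementary; the closest thing to a genuine obstacle is the careful bookkeeping of the coprimality conditions through the two substitutions, together with the routine check that the reduction modulo $b$ of the inverse of $d$ modulo $c$ is the inverse of $d$ modulo $b$.
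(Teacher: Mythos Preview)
Your proof is correct and follows essentially the same route as the paper: open the Kloosterman sum, sort according to $a=(d-1,c)$, write $d=1+ae$ with $e$ modulo $b$, and finish with the substitution $x\equiv\overline{e}$. The only differences from the paper's argument are notational, and your bookkeeping of the coprimality conditions through both substitutions is accurate.
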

Remark.  This calculation seems to have been first performed by Luo \cite{Luo}, but see also \cite{IL} for some curious connections.
\begin{proof}
 By opening the Kloosterman sum, the left hand side of \eqref{eq:Stwist} is
\begin{equation*}
 \sum_{\substack{h \shortmod{c} \\ (h,c)=1}} \e{(h-1)m + (\overline{h}-1)n}{c}.
\end{equation*}
Write $(h-1, c) = a$, $c=ab$, and change variables $h \equiv 1 + a x \pmod{c}$ where now $x$ runs modulo $b$ and satisfies $(x(1+ax),b) = 1$.  Note that $\overline{1+ax} - 1 \equiv -ax (\overline{1 + ax}) \pmod{b}$.  Replacing $x$ by $\overline{x}$ gives \eqref{eq:Stwist}.
\end{proof}

Inserting \eqref{eq:Stwist} into \eqref{eq:K0} gives
\begin{equation}
\label{eq:6.3}
 K_{0} = \frac{\Delta T}{\sqrt{N}}  \sum_{ab \leq \frac{N T^{\varepsilon}}{\Delta T}} \frac{1}{\sqrt{ab}} \sum_{\substack{x \shortmod{b} \\ (x(x+a),b)=1}} \sum_{m,n} b_m \overline{b_n} \e{\overline{x} m - (\overline{x+a})n}{b} Z(m,n),
\end{equation}
where
\begin{equation}
 Z(m,n) = e^{\frac{i T^2 ab}{2 \pi \sqrt{mn}}} w_1\leg{ab \Delta T}{\sqrt{mn}} \e{(\sqrt{m}-\sqrt{n})^2}{ab}  \intR g(t/U) \leg{m}{n}^{-it} dt.
\end{equation}
Let
\begin{equation}
\label{eq:J}
 J = \sum_{m,n} b_m \overline{b_n} \e{\overline{x} m - (\overline{x+a})n}{b} Z(m,n),
\end{equation}
so that
\begin{equation}
\label{eq:6.6}
 K_{0} \ll \frac{\Delta T}{\sqrt{N}}  \sum_{ab \ll \frac{N}{\Delta T}T^{\varepsilon} } \frac{1}{\sqrt{ab}} \sum_{\substack{x \shortmod{b} \\ (x(x+a),b)=1}} |J|.
\end{equation}

\section{Separation of variables}
\label{section:separation}
Next we want to separate the variables $m$ and $n$ in $Z(m,n)$, for which we shall use oscillatory integral transforms.  This will allow us to express $J$ in terms of a bilinear form so that we can apply the powerful technology of the large sieve inequality.  

\begin{mylemma}
\label{lemma:K00}
 Let $K_{00}(U, T, \Delta; b_{n,l}) $ denote an expression of the form
\begin{equation}
\label{eq:K00def}
 K_{00} = \Delta T^{1 + \varepsilon}  \sum_{ab \leq  \frac{NT^{\varepsilon}}{\Delta T}} \frac{1}{ab} \int\limits_{v \ll 1}  \sum_{\substack{x \shortmod{b} \\ (x,b)=1}} \Big| \sum_m b_m \e{\overline{x}m}{b} m^{iy_0} e\Big(\frac{v \sqrt{mN}}{Uab} \Big) \Big|^2 dv,
\end{equation}
where $b_m = b_{m,l}$ satisfies the condition \eqref{eq:bnl} as in Lemma \ref{lemma:K0},
and $y_0 \ll T^{\varepsilon}$ is fixed.  If 
\begin{equation}
 \label{eq:K00bound}
\sum_{l \asymp L} l^{-1} K_{00}(U, T, \Delta; b_{n,l}) \ll T^{3 + \varepsilon} \sum_{l^2 n \ll T^{3 + \varepsilon}}  \frac{|A(l,n)|^2}{ln},
\end{equation}
then Theorem \ref{thm:mainthm''} holds.
\end{mylemma}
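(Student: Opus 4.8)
The plan is to deduce \eqref{eq:K0bound} (for every $L\ll T^{3/2+\varepsilon}$) from the hypothesis \eqref{eq:K00bound}; by Lemma~\ref{lemma:K0} this yields Theorem~\ref{thm:mainthm''}. Using the expansion \eqref{eq:6.3}--\eqref{eq:6.6} of $K_0$, the whole task is to bound
\[
 \frac{\Delta T}{\sqrt N}\sum_{ab\ll \frac{N}{\Delta T}T^\varepsilon}\frac{1}{\sqrt{ab}}\sum_{\substack{x\shortmod b\\ (x(x+a),b)=1}}|J|,
\]
with $J$ as in \eqref{eq:J} and $b_m=b_{m,l}$ as in \eqref{eq:bnl}. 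I would separate the variables $m$ and $n$ inside the weight $Z(m,n)$ by oscillatory integral transforms, so that $J$ becomes a short superposition of rank-one bilinear forms, then apply Cauchy--Schwarz, and finally collect constants.

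The weight $Z(m,n)$ couples $m$ and $n$ in three ways. First, the $t$-integral $\intR g(t/U)(m/n)^{-it}\,dt$ equals $U\widehat g(\mathrm{c}\,U\log(m/n))$ for a constant $\mathrm c$, so, since $\widehat g$ has compact support, it vanishes unless $|\log(m/n)|\ll 1/U$, whence $|\sqrt m-\sqrt n|\ll\sqrt N/U$; on this range $\log(m/n)=2(\sqrt m-\sqrt n)/\sqrt N+O(U^{-2})$ uniformly, and I would replace this factor by a function of $\sqrt m-\sqrt n$ alone via a finite asymptotic expansion in powers of $U^{-1}\ll T^{-\varepsilon}$, keeping only the leading term (the lower-order terms have the same shape with an extra saved power of $T^{\varepsilon}$). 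Combined with the phase $\e{(\sqrt m-\sqrt n)^2}{ab}$, the ``difference part'' of $Z$ then becomes a single function $\Psi(\sqrt m-\sqrt n)$, supported in $|\sqrt m-\sqrt n|\ll\sqrt N/U$, whose Fourier transform is concentrated in $|\xi|\ll\sqrt N/(Uab)$ with $\|\widehat\Psi\|_\infty\ll U\sqrt{ab}$ (stationary phase). Inverting this Fourier transform and rescaling the dual variable by $\sqrt N/(Uab)$ separates $m$ from $n$ and produces the phases $e\big(v\sqrt{mN}/(Uab)\big)$ and $e\big(-v\sqrt{nN}/(Uab)\big)$ with $v\ll 1$; the fact that $v$ stays bounded is exactly where $ab\ll\frac{N}{\Delta T}T^\varepsilon$ and $U^2\le\Delta T$ (valid since $U\le\Delta$ and $\Delta=T^{1-\varepsilon}$ by \eqref{eq:Delta}) are used. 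Second, the factor $e^{iT^2ab/(2\pi\sqrt{mn})}w_1(ab\Delta T/\sqrt{mn})$ is a function of $\sqrt{mn}$ which oscillates only mildly on the relevant range: $ab\ll\frac{N}{\Delta T}T^\varepsilon$ and $\sqrt{mn}\asymp N$ give $T^2ab/\sqrt{mn}\ll \frac{T}{\Delta}T^\varepsilon=T^{2\varepsilon}$, so its Mellin transform in $\sqrt{mn}$ decays rapidly past $|y|\ll T^\varepsilon$, and writing it as such a Mellin integral separates $m$ from $n$ with factors $m^{-iy/2}$, $n^{-iy/2}$. A supremum over this $O(T^\varepsilon)$-length Mellin variable (after a trivial discretization, costing $T^\varepsilon$) fixes it into the single factor $m^{iy_0}$ with $y_0\ll T^\varepsilon$ of \eqref{eq:K00def}.

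After these reductions, the double sum in $J$ factors (up to harmless conjugation of the coefficients and a sign on $v$) as a product
\[
 \Big(\sum_m b_m\e{\overline xm}{b}\,m^{iy_0}\,e\big(\tfrac{v\sqrt{mN}}{Uab}\big)\Big)\,\overline{\Big(\sum_n b_n\e{\overline{x+a}n}{b}\,n^{iy_0}\,e\big(\tfrac{v\sqrt{nN}}{Uab}\big)\Big)},
\]
integrated over $v\ll 1$ against a weight whose total mass is $\ll \frac{\sqrt N}{Uab}\cdot U\sqrt{ab}\cdot T^\varepsilon=T^\varepsilon\sqrt N/\sqrt{ab}$ (the three factors being the Fourier-rescaling Jacobian $\sqrt N/(Uab)$, the sup-norm $\|\widehat\Psi\|_\infty\ll U\sqrt{ab}$, and the $L^1$-mass $\ll T^\varepsilon$ of the Mellin weight). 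Now $|A\overline B|\le\tfrac12(|A|^2+|B|^2)$, and on summing over $x$ the $n$-term is reduced to the $m$-shape by the substitution $x\mapsto x+a$, under which the condition $(x(x+a),b)=1$ is symmetric; by positivity one then relaxes $(x(x+a),b)=1$ to $(x,b)=1$. Multiplying by the prefactor $\Delta T/\sqrt N$ from \eqref{eq:6.6} and summing $(ab)^{-1/2}$ over $ab\ll\frac{N}{\Delta T}T^\varepsilon$, all powers of $N$ and $U$ cancel and one is left with $\Delta T^{1+\varepsilon}\sum_{ab\ll\frac{N}{\Delta T}T^\varepsilon}(ab)^{-1}\int_{v\ll 1}(\cdots)\,dv$, i.e.\ exactly an expression of the form \eqref{eq:K00def}. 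Hence $\sum_{l\asymp L}l^{-1}|K_0|\ll T^\varepsilon\sum_{l\asymp L}l^{-1}K_{00}$, so \eqref{eq:K00bound} implies \eqref{eq:K0bound}, and Theorem~\ref{thm:mainthm''} follows via Lemma~\ref{lemma:K0}.

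The main obstacle is the bookkeeping in the first separation: one must verify that collapsing $\e{(\sqrt m-\sqrt n)^2}{ab}$ together with the $t$-integral into a weight of bounded total mass genuinely keeps the rescaled dual variable in the range $v\ll 1$ --- this is the essential payoff of the Diophantine step of Section~\ref{section:Diophantine}, and it is precisely where the hypotheses $T^\varepsilon\le U\le\Delta=T^{1-\varepsilon}$ and $ab\ll\frac{N}{\Delta T}T^\varepsilon$ enter --- and, relatedly, that the ``negligible'' approximations (the Taylor expansion of $\log(m/n)$, the replacement of $\sqrt n$ by $\sqrt N$, and the expansion of the mild phase $e^{iT^2ab/(2\pi\sqrt{mn})}$) really are negligible when summed against $\sum_{m,n}b_m\overline{b_n}$, which forces one to run them as finite asymptotic expansions with explicitly controlled remainders rather than as single truncations.
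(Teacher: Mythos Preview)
Your proposal is correct and follows essentially the same route as the paper. Both arguments split $Z(m,n)=Z_1 Z_2$, separate the $\sqrt{mn}$-dependence in $Z_1$ by a short Mellin integral (producing the $m^{iy_0}$ with $y_0\ll T^\varepsilon$), separate the $(\sqrt m-\sqrt n)$-dependence in $Z_2$ by Fourier inversion, apply $|AB|\le\tfrac12(|A|^2+|B|^2)$, and symmetrize the two resulting squares via $x\mapsto x\pm a$ before relaxing $(x(x+a),b)=1$ to $(x,b)=1$.

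The only noteworthy differences are cosmetic. First, where you invoke stationary phase abstractly to bound $\widehat\Psi$, the paper computes the same transform explicitly via the Fresnel identity $e(z^2)=\tfrac{e^{\pi i/4}}{\sqrt 2}\intR e(tz-t^2/4)\,dt$, obtaining the leading term $Y_1(v)=\tfrac{e^{\pi i/4}}{\sqrt 2}\,U\,e(-v^2/4)\,w_3(v/(2Z))$; this makes the support condition $v\ll 1$ and the size $U\sqrt{ab}$ manifest and gives the asymptotic expansion with controlled remainders that you correctly flag as the main bookkeeping obstacle. Second, the paper does \emph{not} replace $\sqrt n$ by $\sqrt N$ in the argument of $\widehat g$; instead it keeps the resulting weight $w_3(v\sqrt{N/n})$ inside the square until the very end and removes it by one more short Mellin integral. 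Your early replacement is equally valid (it is another instance of the same Taylor-expansion mechanism), but the paper's order of operations avoids having to track that extra expansion.
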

\begin{proof}
 We find an asymptotic expansion $J = \sum_{1 \leq i \leq r} J_i + O(T^{-200})$ such that when inserted into \eqref{eq:6.6}, gives a bound of the form, say $|K_0| \ll \sum_i |K_{0, i}| + O(T^{-150})$, and such that each $|K_{0,i}|$ is of the form \eqref{eq:K00def}.  In this way we reduce the estimation of $K_0$ to that of $K_{00}$.

First attach a smooth, compactly-supported weight function $w_2(\sqrt{mn}/N)$ to $Z(m,n)$ that takes the value $1$ for all $m$ and $n$ in the support of the implicit weight function appearing in $b_m \overline{b_n}$, and write
$Z(m,n) = Z_1(m,n) Z_2(m,n)$, where
\begin{equation}
 Z_1(m,n) = e^{\frac{i T^2 ab}{2 \pi \sqrt{mn}}} w_1\leg{ab \Delta T}{\sqrt{mn}} w_2\leg{\sqrt{mn}}{N}, 
\end{equation}
and
\begin{equation}
 Z_2(m,n) = \e{(\sqrt{m}-\sqrt{n})^2}{ab} \intR g(t/U) \leg{m}{n}^{-it} dt.
\end{equation}

We separate variables in each of $Z_1$ and $Z_2$ in turn.
By Mellin inversion, we obtain
\begin{equation}
\label{eq:Z1}
 Z_1(m,n) = \frac{1}{2 \pi } \intR F_{a,b,\Delta, T, N}(iy) \leg{N}{\sqrt{mn}}^{iy} dy,
\end{equation}
where
\begin{equation*}
 F_{a,b,\Delta, T, N}(iv) = \int_0^{\infty}  w_1\leg{ab \Delta T}{N x} x^{-1} w_2(x) e^{\frac{i T^2 ab}{2 \pi N x}} x^{iv} dx. 
\end{equation*}
If $|y| \gg T^{\varepsilon}$ then repeated integration by parts shows $F(iy)$ is smaller than any negative power of $T$.  For $|y| \ll T^{\varepsilon}$, a trivial bound shows $F(iy) \ll 1$.  
This separates the variables $m$ and $n$ by a very short integral (essentially no cost).

Next we separate variables in
\begin{equation}
\label{eq:7.7}
 Z_2(m,n) =  \e{(\sqrt{m}-\sqrt{n})^2}{ab} U \widehat{g}\left(\tfrac{U}{2\pi} \log\frac{m}{n} \right).
\end{equation}
Recalling that $\widehat{g}$ has compact support, which restricts $m$ and $n$ so that $|m-n| \ll U^{-1} N$, we can then use a Taylor expansion to write
\begin{equation}
\label{eq:7.8}
 \log{\frac{m}{n}} = 2 \log\left( 1 + \frac{\sqrt{m} - \sqrt{n}}{\sqrt{n}} \right) = 2 \frac{\sqrt{m} - \sqrt{n}}{\sqrt{n}} -  \leg{\sqrt{m} - \sqrt{n}}{\sqrt{n}}^2 + \dots,
\end{equation}
noting
 $\left| \frac{\sqrt{m} - \sqrt{n}}{\sqrt{n}} \right| \ll U^{-1}$.
Then we get an asymptotic expansion for $\widehat{g}$ in the form
\begin{equation}
\label{eq:7.10}
 \widehat{g}\left(\frac{U}{2\pi} \log\frac{m}{n} \right) = \widehat{g}\left(\frac{U}{\pi} \frac{\sqrt{m} - \sqrt{n}}{\sqrt{n}} \right) - \half U^{-1} \left(U\frac{\sqrt{m} - \sqrt{n}}{\sqrt{n}}\right)^2 \widehat{g}'\left(\frac{U}{\pi} \frac{\sqrt{m} - \sqrt{n}}{\sqrt{n}} \right) + \dots.
\end{equation}
Note that each successive term has the same form as the leading term, but is smaller by a power of $U$, so we shall treat the generic term in what follows.

Thus it suffices to consider 
\begin{equation*}
 Z_3(m,n) = \e{(\sqrt{m}-\sqrt{n})^2}{ab} U w_3\left(U\frac{\sqrt{m} - \sqrt{n}}{\sqrt{n}} \right),
\end{equation*}
where $w_3$ is compactly supported and satisfies \eqref{eq:fbound} with $Y=1$.  Let $z = \frac{\sqrt{m} - \sqrt{n}}{\sqrt{ab}}$, and $Z = \frac{1}{U} \sqrt{\frac{n}{ab}}$, so that
\begin{equation*}
 Z_3(m,n) = U e(z^2) w_3\leg{z}{Z}.
\end{equation*}
Suppose $Z \gg T^{\varepsilon}$ (in our application this is always satisfied since $ab \ll (\Delta T)^{-1+ \varepsilon} N$, $\Delta = T^{1-\varepsilon}$, and $U \leq \Delta$).  
Then by Fourier inversion,
\begin{equation*}
 Z_3(m,n) = \intR Y(v) \e{v(\sqrt{m} - \sqrt{n})}{\sqrt{ab}} dv,
\quad
 Y(v) = U \intR e(z^2-vz) w_3\leg{z}{Z} dz. 
\end{equation*}
With inspiration here from pp. 431-432 of \cite{Sarnak},
we want to apply the Plancherel formula.  The function $e(z^2)$ is not $L^2$ so instead we argue directly by using $e(z^2) = \frac{e^{\pi i/4}}{\sqrt{2}} \intR e(tz - \frac{t^2}{4}) dt$ (which can be checked directly, the integral seen to converge uniformly on integration by parts) and reversing the order of integration.  Thus
\begin{equation*}
 Y(v) = \frac{e^{\pi i/4}}{\sqrt{2}} UZ  \intR e\Big(-\big(\frac{v+y}{2}\big)^2  \Big) \widehat{w_3}(-yZ) dy.
\end{equation*}
Expanding the square, we get
\begin{equation*}
 Y(v) = \sqrt{2} e^{\pi i/4} UZ e\Big(\frac{-v^2}{4}\Big) \intR e\left(-y^2 -vy\right) \widehat{w_3}(-2yZ) dy.
\end{equation*}
Next truncate the integral at $T^{-\varepsilon}$ (with negligible error), expand $e(-y^2)$ into a Taylor series 
taking $O(1/\varepsilon)$ terms so that the remainder is $O(T^{-2009})$, and then extend the integral back to $\mr$.  This gives an asymptotic expansion for $Y(v)$, with leading-order term say $Y_1$ given by
\begin{equation*}
 Y_1(v) = \sqrt{2} e^{\pi i/4} U Z e\Big(\frac{-v^2}{4}\Big) \intR e(-vy) \widehat{w_4}(-2yZ) dy = U \frac{e^{\pi i/4}}{\sqrt{2}} e\Big(\frac{-v^2}{4}\Big) w_3\leg{v}{2Z}.
\end{equation*}
The lower-order terms are similar but multiplied by powers of $Z^{-1}$, and with derivatives of $w_3$ replacing $w_3$, which of course are also compactly-supported and satisfy \eqref{eq:fbound} with $Y=1$.  Thus we conclude that
\begin{equation}
\label{eq:Z2}
 Z_2(m,n) \sim \frac{e^{\pi i/4}}{\sqrt{2}} \intR e\Big(\frac{-v^2}{4U^2}\Big) w_3\leg{v}{2UZ} \e{v(\sqrt{m} - \sqrt{n})}{U\sqrt{ab}} dv.
\end{equation}
Combining \eqref{eq:Z1} and \eqref{eq:Z2}, we get an asymptotic expansion for $J$ with leading-order term $J_1$ of the form
\begin{multline}
\label{eq:J1UB}
 |J_1| \ll \int_{-T^{\varepsilon}}^{T^{\varepsilon}} \int_{|v| \ll \sqrt{\frac{N}{ab}}} 
\Big|\sum_m b_m e\Big(\frac{\overline{x}m}{b} \Big) m^{-iy/2} e\Big(\frac{v \sqrt{m}}{U\sqrt{ab}} \Big) \Big|
\\
\Big|\sum_{n} b_n e\Big(\frac{  (\overline{x+a}) n}{b} \Big) n^{iy/2}  e\Big(\frac{v \sqrt{n}}{U\sqrt{ab}} \Big)  w_3\big(\frac{v}{ \sqrt{\frac{n}{ab}}} \big) \Big| dv dy.
\end{multline}
Using the simple inequality $|A| |B| \leq \half |A|^2 + \half |B|^2$, we obtain a bound for $|J_1|$ with two similar terms, one involving $w_3$, the other without $w_3$.  Consider the term with $w_3$, the other term being similar yet easier.  Inserting this bound into \eqref{eq:6.6}, we get a term of the form
\begin{equation*}
 \frac{\Delta T}{\sqrt{N}} \sum_{ab \leq \frac{N T^{\varepsilon}}{\Delta T}} \frac{1}{\sqrt{ab}} \int_{-T^{\varepsilon}}^{T^{\varepsilon}} 
\intR
\sum_{\substack{x \shortmod{b} \\ (x(x+a),b)=1}} \Big|\sum_{n} b_n e\Big(\frac{  (\overline{x+a}) n}{b} \Big) n^{iy/2}  e\Big(\frac{v \sqrt{n}}{U\sqrt{ab}} \Big) w_3\big(\frac{v}{ \sqrt{\frac{n}{ab}}} \big) \Big|^2 dv dy.
\end{equation*}
Now we simply drop the condition $(x,b) = 1$ by positivity, change variables $x \rightarrow x-a$ and change variables $ v \rightarrow \frac{\sqrt{N}}{\sqrt{ab}} v$.  In this way we arrive at a term of the form
\begin{equation*}
 \Delta T \sum_{ab \leq \frac{N T^{\varepsilon}}{\Delta T}} \frac{1}{ab} \int_{-T^{\varepsilon}}^{T^{\varepsilon}} \int_{|v| \ll 1} \sum_{\substack{x \shortmod{b} \\ (x,b)=1}} \Big|\sum_{n} b_n e\Big(\frac{  \overline{x} n}{b} \Big) n^{iy/2}  e\Big(\frac{v \sqrt{nN}}{Uab} \Big)  w_3\big(v \sqrt{N/n} \big) \Big|^2 dv dy,
\end{equation*}
which after summing over $l$ and bounding the $y$-integral by its length times the supremum over $y$ would give the desired form for Lemma \ref{lemma:K00}, except for the presence of $w_3(v \sqrt{\frac{N}{n}})$.  However, we can remove this dependence by a simple Mellin inversion similarly to how we handled $Z_1(m,n)$, so we omit the details (recall that the support on $b_n$ implicitly has $n \asymp N$).  The lower-order terms involving powers of $Z^{-1}$ can also be seen to have the same form.
\end{proof}

%


We shall use different methods of estimation depending on the sizes of $a$ and $b$.  Let $K_{A,B} = K_{A,B}(U, T, \Delta, a_{n,l})$ denote the same sum as $K_{00}$ given by \eqref{eq:K00def} but with $A < a \leq 2A$ and $B < b \leq 2B$, 
so that
\begin{equation}
 K_{0,0}(U, T, \Delta; a_{n,l}) \ll \log^2{T} \sup_{AB \leq \frac{N T^{\varepsilon}}{\Delta T}} K_{A,B}(U, T, \Delta, a_{n,l}),
\end{equation}
where \eqref{eq:cbound} translates to give the condition
\begin{equation}
 \label{eq:AB}
AB \ll \frac{N T^{\varepsilon}}{\Delta T}.
\end{equation}
By changing variables $v \rightarrow \frac{ab}{AB} v$ and summing trivially over $a$, note that
\begin{equation}
\label{eq:8.18}
 K_{A,B} \ll T^{\varepsilon} \frac{\Delta T}{B}  \int\limits_{v \ll 1}  \sum_{b \asymp B} \sum_{\substack{x \shortmod{b} \\ (x,b)=1}} \Big| \sum_m b_m e\Big(\frac{\overline{x}m}{b} \Big) m^{iy_0} e\Big(\frac{v \sqrt{mN}}{UAB} \Big) \Big|^2 dv.
\end{equation}
We can immediately apply Lemma \ref{lemma:variantsieve}, noting as well that $|b_m|^2 \ll |a_m|^2$, to get
\begin{mylemma}
\label{lemma:KABsmall}
We have
\begin{equation}
\label{eq:K1first}
 K_{A,B}(U, T, \Delta;a_{n}) \ll  T^{\varepsilon} \Delta T (B + UA) \sum_{n \leq N} |a_n|^2 \ll T^{\varepsilon} \Big(\frac{T^3}{A} + T^2 U A \Big) \sum_{n \leq N} |a_n|^2.
\end{equation}
\end{mylemma}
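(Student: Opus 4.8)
The plan is to observe that \eqref{eq:8.18} is already in precisely the shape required by the variant large sieve of Lemma~\ref{lemma:variantsieve}, so the proof reduces to matching up the ingredients correctly. First I would absorb the unimodular, $v$-independent factor $m^{iy_0}$ into the coefficients, setting $c_m = b_m m^{iy_0}$ so that $|c_m| = |b_m|$; then the only factor depending on the continuous variable $v$ is $e\big(v f(m)\big)$ with
\[
f(m) = \frac{\sqrt{mN}}{UAB}.
\]
Because $b_m = b_{m,l}$ is supported on the dyadic interval $m \asymp N$ (so $M \asymp N$ in the notation of Lemma~\ref{lemma:variantsieve}), one has $f'(m) = \tfrac{1}{2UAB}\sqrt{N/m} \asymp \tfrac{1}{UAB}$ throughout the support of $b_m$; in particular $f'$ is continuous and does not vanish, and $X := \sup_m |f'(m)|^{-1} \asymp UAB$. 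The range of integration $|v| \ll 1$ puts us in the $T = O(1)$ case of Lemma~\ref{lemma:variantsieve}.

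Applying Lemma~\ref{lemma:variantsieve} directly then bounds the $v$-integral together with the sums over $b$ and $x$ in \eqref{eq:8.18} by $(B^2 + UAB)\sum_m |c_m|^2 = (B^2 + UAB)\sum_m |b_m|^2$. Hence
\[
K_{A,B} \ll T^{\varepsilon}\,\frac{\Delta T}{B}\,\bigl(B^2 + UAB\bigr)\sum_m |b_m|^2 = T^{\varepsilon}\,\Delta T\,(B + UA)\sum_m |b_m|^2,
\]
and the first displayed inequality of Lemma~\ref{lemma:KABsmall} follows from $|b_m|^2 \ll |a_m|^2$. For the second inequality I would substitute $\Delta = T^{1-\varepsilon}$ and use the constraint \eqref{eq:AB}, namely $AB \ll N T^{\varepsilon}/(\Delta T)$, along with $N \le P \ll T^{3 + \varepsilon}$: the first term becomes $\Delta T\,B \ll N T^{\varepsilon}/A \ll T^{3+\varepsilon}/A$, while the second becomes $\Delta T \cdot UA = T^{2-\varepsilon}\,U A \le T^2 U A$. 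Collecting the $T^{\varepsilon}$'s gives the stated bound.

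I do not expect any real obstacle: this estimate is an essentially immediate consequence of Lemma~\ref{lemma:variantsieve}. The only points that require a moment's care are verifying the hypotheses of that lemma — trivial, since $m \asymp N > 0$ makes $f'$ continuous and bounded away from $0$ — and ensuring that $m^{iy_0}$ is handled by absorption into the coefficients rather than folded into the phase $f$, where it would create a spurious $1/v$ singularity. The genuine difficulty of the paper lies not in this bound (which by itself suffices only for $A$ in a restricted range) but in treating the complementary range of $a$ and $b$, which is carried out by different methods elsewhere in the paper.
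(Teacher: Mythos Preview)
Your proposal is correct and matches the paper's approach exactly: the paper states only that \eqref{eq:8.18} is bounded by ``immediately applying Lemma~\ref{lemma:variantsieve}, noting as well that $|b_m|^2 \ll |a_m|^2$,'' and you have simply filled in the routine details of identifying $f$, computing $X \asymp UAB$, and substituting \eqref{eq:AB} and $\Delta = T^{1-\varepsilon}$ for the second inequality.
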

This estimate is sufficient for \eqref{eq:K00bound} for $A$ small ($A \ll U^{-1} T^{1 + \varepsilon}$), so we henceforth assume
\begin{equation}
 \label{eq:Alower}
A \geq T^{\varepsilon},
\end{equation}
which simplifies some later work.  Also, notice that we used no special properties of the coefficients $a_n$ so far.

\section{Voronoi summation}
In order to improve on Lemma \ref{lemma:KABsmall} we resort to use special properties of the coefficients $a_n$.  Our tool is the $GL(3)$ Voronoi summation formula proved by \cite{MS}.  We will state this important formula in a form developed by X. Li \cite{Li}.
\begin{mytheo}[Miller-Schmid]  Let $\psi$ be a smooth function with compact support on the positive reals.  Then
 \begin{multline}
\label{eq:Voronoi}
  \sum_{n} A(l,n) e\Big(\frac{n \overline{x}}{b} \Big) \psi(n) = \frac{b \pi^{-\frac52}}{4i} \sum_{n_1 | bl} \sum_{n_2 > 0} \frac{A(n_2,n_1)}{n_1 n_2} S\Big(lx, n_2 ; \frac{bl}{n_1} \Big) \Psi_1 \Big(\frac{n_2 n_1^2}{b^3 l}\Big)
\\
+ \frac{b \pi^{-\frac52}}{4i} \sum_{n_1 | bl} \sum_{n_2 > 0} \frac{A(n_2,n_1)}{n_1 n_2} S\Big(lx, -n_2 ; \frac{bl}{n_1} \Big) \Psi_2 \Big(\frac{n_2 n_1^2}{b^3 l}\Big),
 \end{multline}
for certain integral transforms $\Psi_1$ and $\Psi_2$.
\end{mytheo}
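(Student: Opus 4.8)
The plan is to derive \eqref{eq:Voronoi} from the $GL(3)$ Voronoi summation formula of Miller--Schmid \cite{MS}, whose core is the case $l=1$ with a primitive additive twist, by upgrading to general $l$ via the $GL(3)$ Hecke relations, following the form given by X. Li \cite{Li}. First I would record the base case: \cite{MS} shows that for $(x,b)=1$ and $\psi$ smooth and compactly supported on the positive reals,
\begin{equation*}
 \sum_n A(1,n)\, \e{n\overline{x}}{b}\, \psi(n) = \frac{b \pi^{-\frac52}}{4i} \sum_{\pm}\ \sum_{n_1 \mid b}\ \sum_{n_2 > 0} \frac{A(n_2,n_1)}{n_1 n_2}\, S\Big(x, \pm n_2; \frac{b}{n_1}\Big)\, \Psi_{\pm}\Big(\frac{n_2 n_1^2}{b^3}\Big),
\end{equation*}
where $\Psi_{+} = \Psi_1$ and $\Psi_{-} = \Psi_2$ are Hankel-type transforms whose Mellin transforms are fixed ratios of the archimedean factors $\Gamma_{\nu_1,\nu_2}$ entering the functional equation for $L(\phi,s)$, multiplied by the Mellin transform of $\psi$, and the constant $b\pi^{-5/2}/(4i)$ is part of this normalization. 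This is precisely what is proved in \cite{MS} through the theory of automorphic distributions --- the substantive work there being to make sense of the additively twisted distribution attached to $\phi$ and to compute its functional equation --- and I would invoke it rather than reprove it.

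To pass to general $l$, I would insert the $GL(3)$ Hecke relation $A(l,n) = \sum_{d \mid (l,n)} \mu(d)\, A(l/d,1)\, A(1,n/d)$, which is the identity already used in deriving \eqref{eq:lnA}, and substitute $n = dm$, reducing the left side of \eqref{eq:Voronoi} to $\sum_{d \mid l} \mu(d)\, A(l/d,1) \sum_m A(1,m)\, \e{dm\overline{x}}{b}\, \psi(dm)$. Writing $e = (d,b)$, $b = e\beta$, $d = e\delta$ so that $(\delta,\beta)=1$, the twist collapses to $\e{\delta m \overline{x}}{\beta}$, which is primitive modulo $\beta$; applying the base case with modulus $\beta$ and test function $\psi(d\,\cdot)$ then writes each inner sum as a double sum over $n_1 \mid \beta$ and $n_2 > 0$. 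What remains --- and this is where I expect the real work to be --- is the arithmetic reassembly: one sums back over $d \mid l$ against $\mu(d) A(l/d,1)$ and uses $GL(3)$ Hecke multiplicativity in reverse to collapse the resulting combination of partial Kloosterman sums $S(\cdot, \pm n_2; \beta/n_1)$ and rescaled transforms into the single divisor condition $n_1 \mid bl$, modulus $bl/n_1$, coefficient $A(n_2,n_1)$, Kloosterman sum $S(lx, \pm n_2; bl/n_1)$, and transform argument $n_2 n_1^2/(b^3 l)$ of the stated formula. This step is elementary but delicate, since the factors of $(l,b)$ have to be tracked consistently through both the modulus and the argument of $\Psi_{\pm}$; it is exactly the computation in \cite{Li}, which I would cite.

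Downstream, the only facts about $\Psi_1$ and $\Psi_2$ that are needed are that each has Mellin transform equal to a fixed ratio of $GL(3)$ gamma factors times $\widetilde{\psi}$, so that Stirling's formula controls $\Psi_j(y)$ in all ranges of $y$: rapid decay once $y$ drops below the conductor set by the support of $\psi$, and an explicit oscillatory asymptotic expansion with a power-law envelope above it. Recording these facts is routine. Thus the single genuine obstacle in the argument is the bookkeeping of the second paragraph --- getting the modulus $bl/n_1$ and the scaling $n_2 n_1^2/(b^3 l)$ exactly right when $l$ and $b$ share common factors --- and everything else is either a direct quotation of \cite{MS} or a standard Stirling-type estimate.
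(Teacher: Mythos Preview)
The paper does not prove this theorem; it is quoted as a black box, with the proof attributed to Miller--Schmid \cite{MS} and the formulation for general $l$ attributed to X. Li \cite{Li}. Your sketch is a faithful outline of how Li's reduction to the $l=1$ case of \cite{MS} proceeds (Hecke relation, collapse of the additive twist modulo $(d,b)$, reassembly via multiplicativity), so there is nothing to correct --- but since the paper itself supplies no argument here, the appropriate comparison is simply that you have written out what the paper chose to cite.
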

We need an explicit asymptotic expansion of $\Psi_1$ and $\Psi_2$, which is provided by Lemma 6.1 of \cite{Li2}  (generalizing Lemma 3 of Ivi\'{c} \cite{Ivic}).  Each of $\Psi_1$ and $\Psi_2$ is a linear combination of two other functions $\Psi_0(x)$ and $x^{-1} \Psi_{0,0}(x)$, say, where each has similar asymptotic behavior, so it suffices to treat $\Psi_0(x)$.
\begin{mylemma}[Ivi\'{c}, Li]  \label{lem:10.2} Suppose $\psi(r)$ is supported on $[N, 2N]$.  Then 
 there exist constants $c_{j, \pm}$ such that
\begin{equation}
 \Psi_0(x) = \sum_{j=1}^{L} \sum_{\pm} c_{j, \pm} x \int_0^{\infty} \psi(r) e(\pm 3 (xr)^{1/3}) \frac{dr}{(xr)^{j/3}} + O\left((xN)^{\frac{-L + 2}{3}} \right).
\end{equation}
\end{mylemma}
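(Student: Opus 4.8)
The plan is to reduce the whole statement to a one‑variable question about a fixed Bessel‑type kernel and then to attack that by stationary phase on a Mellin--Barnes integral. Recall that, in the form of \cite{Li}, the $GL(3)$ Voronoi transforms $\Psi_1,\Psi_2$ of $\psi$ are, up to elementary factors, Hankel‑type transforms $\Psi_0(x) = x\int_0^\infty \psi(r)\,\mathcal K(\pi^3 x r)\,dr$ against a fixed kernel $\mathcal K$ attached to $(\nu_1,\nu_2)$, whose Mellin transform is a ratio of three $\Gamma$‑factors:
\[
 \mathcal K(y) = \frac{1}{2\pi i}\int_{(\sigma)} \gamma_0(s)\, y^{-s}\, ds,
 \qquad
 \gamma_0(s) = \frac{\prod_{i=1}^3 \Gamma\!\left(\tfrac{s+\mu_i}{2}\right)}{\prod_{i=1}^3 \Gamma\!\left(\tfrac{1-s-\mu_i}{2}\right)}\cdot(\text{elementary factors}),
\]
with $\mu_1+\mu_2+\mu_3=0$ determined by $\nu_1,\nu_2$ and $\sigma$ a fixed small real number chosen so that no pole of $\gamma_0$ is crossed. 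After the splitting of $\Psi_1,\Psi_2$ into the pieces $\Psi_0$ and $x^{-1}\Psi_{0,0}$ recorded above, it suffices to produce a large‑argument asymptotic expansion of $\mathcal K$; the regime $\pi^3 xr\ll 1$ is trivial, since there $\mathcal K$ is bounded and the claimed remainder dominates.

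For $y$ large I would analyze $\mathcal K(y)$ by stationary phase applied to the $s$‑integral, as in Ivi\'{c}'s argument for the ternary divisor function. Applying Stirling to the numerator and denominator of $\gamma_0$ separately on $\text{Re}(s)=\sigma$, the exponential factors $e^{\mp 3\pi|\text{Im}(s)|/4}$ cancel, so $\gamma_0(\sigma+i\tau)$ equals an amplitude of polynomial size in $\tau$ (slowly varying, with controlled derivatives) times an oscillatory factor, together with a full asymptotic series $\gamma_0(s)\sim e^{\Phi(s)}\sum_{k\ge 0}b_k s^{-k}$ whose leading Stirling term has the shape $\Phi(s)=\tfrac{3s}{2}\log(-s^2/4)-3s+(\text{lower order in }s)$. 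Writing the integrand as $\gamma_0(s)y^{-s}=\exp(\Phi(s)-s\log y)(1+\cdots)$ with $s=\sigma+i\tau$, the phase $\text{Im}(\Phi-s\log y)$ has exactly one stationary point for each sign of $\tau$, located at $\tau=\pm\tau_0$ with $\tau_0\asymp y^{1/3}$ and phase value $\mp 6\,y^{1/3}$ there, while its second derivative at $\pm\tau_0$ is $\asymp y^{-1/3}$; the exact stationary point solves a transcendental equation whose solution is holomorphic in $y^{-1/3}$, just as the critical point $u_0$ of \cite{JM} is holomorphic in $T/x$. For $\sigma$ chosen suitably the contribution of $\tau$ away from $\pm\tau_0$ is negligible (polynomial decay of the amplitude plus repeated integration by parts), and the stationary‑phase expansion (in the style of Chapter~8 of \cite{IK}) applied at the two stationary points gives
\[
 \mathcal K(y) = \sum_{\pm}\sum_{j=1}^{L} c'_{j,\pm}\, \exp\!\left(\pm 6 i y^{1/3}\right)\, y^{-j/3} + O\!\left(y^{-(L+1)/3}\right),
\]
the two stationary points producing the two signs, and each further term — assembled from the Taylor expansion at the stationary point together with the expansions of the exact stationary point and of the Stirling series — being smaller by a factor $y^{-1/3}$, so the remainder after $L$ terms has the size of the first omitted term.

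It then only remains to substitute $y=\pi^3 xr$, noting $\exp(\pm 6 i(\pi^3 xr)^{1/3}) = e(\pm 3(xr)^{1/3})$, and to insert this into the transform: absorbing powers of $\pi$ into the constants and using that $\psi$ is supported on $[N,2N]$ with $\psi\ll 1$, the main terms take exactly the stated form with constants $c_{j,\pm}$, and the remainder is
\[
 \ll x\int_N^{2N}(xr)^{-(L+1)/3}\,dr \ll (xN)^{1-(L+1)/3} = (xN)^{(-L+2)/3},
\]
which is the claimed bound. I expect the main obstacle to be making the stationary‑phase step genuinely rigorous and uniform: choosing the contour $\sigma$ so that the non‑stationary part of the $\tau$‑integral really is negligible even though the amplitude decays only polynomially (not exponentially), and carrying the Stirling remainder and the $y^{-1/3}$‑holomorphic dependence of the exact stationary point through the stationary‑phase lemma while keeping $L$ in a range compatible with the size of $y$ so that all the auxiliary expansions terminate with an admissible error. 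For the ternary divisor case this is Lemma~3 of \cite{Ivic}, and the general case is Lemma~6.1 of \cite{Li2}, whose proof is of exactly this shape.
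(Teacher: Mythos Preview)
The paper does not actually prove this lemma: it is stated with the attribution ``[Ivi\'{c}, Li]'' and introduced as being ``provided by Lemma~6.1 of \cite{Li2} (generalizing Lemma~3 of Ivi\'{c} \cite{Ivic}).'' The only comment the paper adds is that an easy contour shift gives rapid decay of $\Psi_0(x)$ for $xN\to\infty$ and boundedness for $xN\ll 1$. Your sketch is therefore not competing with any argument in the paper; rather, you are outlining the proof from the cited references, and you explicitly say so in your final sentence.

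As a sketch of that cited proof your outline is sound: the kernel $\mathcal K$ does have the Mellin--Barnes form you describe, the Stirling approximation produces two stationary points at $\tau\asymp\pm y^{1/3}$ with phase $\mp 6 y^{1/3}$, and the stationary‑phase expansion at those points gives the series in powers of $y^{-1/3}$. The substitution $y=\pi^3 xr$ and the trivial estimate of the remainder integral are correct. The caveats you flag (uniformity of the stationary‑phase step, control of the non‑stationary contribution given only polynomial decay of the amplitude, and keeping $L$ in an admissible range) are precisely the technical points one has to check, and they are handled in \cite{Li2}. So your proposal is correct and matches the approach of the references the paper invokes; for the purposes of this paper, the appropriate ``proof'' is simply the citation.
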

An easy contour shift argument shows that $\Psi_0(x)$ has rapid decay for $xN \rightarrow \infty$, and is bounded for $xN \ll 1$ (see the original expression (6.12) of \cite{Li2}).

Write \eqref{eq:8.18} in the form
\begin{equation*}
K_{A,B} \ll \frac{\Delta T^{1 + \varepsilon}}{B}  \int\limits_{v \ll 1}  \sum_{b \asymp B} \sum_{\substack{x \shortmod{b} \\ (x,b)=1}} \left|V(v; x, b)  \right|^2 dv,
\end{equation*}
where
\begin{equation}
V(v; x, b)= N^{-\half} \sum_m A(l,m) e\Big(\frac{xm}{b} \Big) m^{iy_0} e\Big(\frac{v \sqrt{mN}}{UAB} \Big) w(m),
\end{equation}
where $w$ is a function as in Lemma \ref{lemma:cleaning2}; notice that we wrote $n^{-\half} = N^{-\half} \leg{N}{n}^{\half}$, and absorbed the latter term into the weight function $w$.

Applying the Voronoi summation formula to $V(v;x,b)$ with $\psi(r) = r^{iy_0} e\big(\frac{v \sqrt{rN}}{U AB} \big) w(r)$, we obtain $V(v;x,b) = V_1(v;x,b) + V_2(v;x,b)$, say, corresponding to the two terms on the right hand side of \eqref{eq:Voronoi}.  We accordingly write $K_{A,B} \ll K_{A,B}^{+} + K_{A,B}^{-}$.  Changing variables by $x \rightarrow -x$ shows that $K_{A,B}^{-}$ is of a form similar to that of $K_{A,B}^{+}$, so we shall henceforth only treat $K_{A,B}^{+}$.  

First we claim we may assume $xN \gg T^{\varepsilon}$.  Otherwise, using $b \ll \frac{NT^{\varepsilon}}{AT^2}$, then
\begin{equation*}
 n_1^2 n_2 \ll \frac{b^3 l}{N} \ll T^{\varepsilon} \frac{N^2 l}{A^3 T^6}.
\end{equation*}
Recalling that $N \ll T^{3 + \varepsilon}/L^2$, we get that
$n_1^2 n_2 \ll \frac{T^{\varepsilon}}{L^3 A^3}$.
By \eqref{eq:Alower}, this condition is never satisfied.

Since $xN \gg T^{\varepsilon}$, Lemma \ref{lem:10.2} gives an asymptotic expansion of $V_1$.  As usual, 
we treat the leading-order term, say $K_{A,B}^{0}$, which takes the form
\begin{equation}
\label{eq:K00}
 K_{A,B}^{0} = \frac{\Delta T^{1 + \varepsilon}}{B}  \int\limits_{v \ll 1}  \sum_{b \asymp B} \sum_{\substack{x \shortmod{b} \\ (x,b)=1}} \Big|\sum_{n_1 | bl} \sum_{n_2 > 0} \frac{A(n_2, n_1)}{n_1 n_2} S\big(lx, n_2 ; \frac{bl}{n_1}\big) \Phi\Big(\frac{n_2 n_1^2}{b^3 l}\Big) \Big|^2 dv,
\end{equation}
where $\Phi$ is a function of the form
\begin{equation*}
 \Phi(\lambda) = \frac{b}{\sqrt{N}} \lambda \int_0^{\infty} w(r) r^{iy_0} e\Big( -3 (\lambda r)^{1/3} + \frac{u \sqrt{rN}}{UAB} \Big) \frac{dr}{(\lambda r)^{1/3}},
\end{equation*}
where $w$ is smooth, supported on $[N, 2N]$, satisfying \eqref{eq:fbound} with $Y=N$,
 and $y_0 \ll T^{\varepsilon}$, possibly after changing variables $v \rightarrow -v$ or $y_0 \rightarrow -y_0$.  The change of variables $r \rightarrow Nr$ gives, with $w_N(r) = w(Nr)$, 
\begin{equation}
\label{eq:10.9}
 \Phi(\lambda) = \frac{b}{\sqrt{N}} (\lambda N)^{2/3} N^{iy_0} \int_0^{\infty} w_N(r) r^{-\frac13 + iy_0} e\Big(-3 (\lambda rN)^{1/3} + \frac{v \sqrt{r}N}{UAB} \Big) dr.
\end{equation}

Our plan now is to express $K_{A,B}^{0}$ into a form where we can apply Lemma \ref{lemma:variantsieve}; the $v$-integral is critical for an extra saving effect, and as such it is important to understand the phase of the integral transform $\Phi$ (not just its magnitude).

\section{Asymptotic behavior of $\Phi(x)$}
Under the assumption $xN \gg T^{\varepsilon}$ (with $y_0 \ll T^{\varepsilon/100}$, say), the first term in the exponential in \eqref{eq:10.9} dominates over the phase of $y^{iv_0}$.  Unless the two terms in the exponential are of the same order of magnitude and of opposite signs (in particular, $v$ must be positive), then an easy integration by parts argument shows that $\Phi(\lambda)$ is negligible (smaller than $T^{-2009}$).  That is, $\Phi(\lambda)$ is small unless
\begin{equation}
\label{eq:xsize}
 \lambda \asymp \frac{v^3 N^2}{(UAB)^3},
\end{equation}
with certain absolute implied constants.  
Now suppose \eqref{eq:xsize} holds.
 
We shall treat general integrals of the form
\begin{equation*}
I = \int_0^{\infty} f(y) e(\alpha y^{1/2} - \beta y^{1/3}) dy,
\end{equation*}
where $\alpha, \beta > 0$, $\alpha \asymp \beta$, and $f$ satisfies
\begin{equation}
\label{eq:gbound}
\text{$f$ is smooth of compact support on $\mr^{+}$, satisfying } f^{(j)}(y) \ll T_0^{j}, 
\end{equation}
for some parameter $1 \leq T_0 \ll |\alpha|^{1/100}$.  The stationary phase method easily gives the main term for $I$, but a search of the literature did not find an adequate asymptotic expansion.  In this section we show that $I$ has an asymptotic expansion (as $\alpha \rightarrow \infty$) with leading term equal to
\begin{equation}
\label{eq:Iasymptotic}
 I \sim \frac{6 \leg{2\beta}{3\alpha}^5}{(2\beta)^{\half}} e\left(\frac{-4 \beta^3}{27 \alpha^2} +\frac18 \right)
f\Big(\big(\frac{2\beta}{3\alpha}\big)^6\Big) ,
\end{equation}
and where the lower-order terms have the same phase, but are smaller by powers of $\alpha$. 

Applying \eqref{eq:Iasymptotic} to $\Phi$, we find that
\begin{equation*}
 \Phi(\lambda) \sim b \sqrt{\lambda} h\leg{(UAB)^3 \lambda}{v^3 N^{2}} \e{-4\lambda (UAB)^2}{v^2 N} \leg{\lambda^2}{v^6}^{iy_0} z(N,U,A,B),
\end{equation*}
where $h$ is a smooth function of compact support on $\mr^{+}$, satisfying \eqref{eq:fbound} with $Y=1$, and $z(N, U, A, B)$ is some bounded function (not depending on either $v$ or $\lambda$).  Noting $b \sqrt{\lambda} = \sqrt{n_1 n_2} \frac{1}{\sqrt{\frac{bl}{n_1}}}$ and inserting this expression into \eqref{eq:K00}, we obtain
\begin{multline}
\label{eq:K002}
K_{A,B}^{0} \ll \frac{\Delta T^{1 + \varepsilon}}{B}  \int\limits_{v \ll 1}  \sum_{b \asymp B} \sum_{\substack{x \shortmod{b} \\ (x,b)=1}} 
\\
\Big|\sum_{n_1 | bl} \sum_{n_2} \frac{A(n_2, n_1)}{\sqrt{n_1 n_2}} \frac{S\big(lx, n_2 ; \frac{bl}{n_1}\big)}{\sqrt{bl/n_1}} h\leg{(UAB)^3 n_2 n_1^2}{v^3 b^3 l N^{2}} \e{-4 n_2 n_1^2 (UAB)^2}{v^2 b^3 lN} (n_1 n_2^2)^{2iy_0} \Big|^2 dv.
\end{multline}
We will continue with this expression in the following section.

\begin{proof}[Proof of \eqref{eq:Iasymptotic}]
Our goal is to use known properties of the Airy function, using ideas similar to those appearing in Section \ref{section:separation}.  First apply the change of variables $y = t^6$ to get
\begin{equation*}
 I = \int_0^{\infty} h(t) e(\alpha t^3 - \beta t^2) dt,
\end{equation*}
where $h(t) =  f(t^6) (6 t^5)$ satisfies \eqref{eq:gbound}.  We will show
\begin{equation}
\label{eq:Iasymptotic2}
 I \sim \frac{1}{(2\beta)^{\half}} e\left(\frac{-4 \beta^3}{27 \alpha^2} +\frac18 \right)
h\left(\frac{2\beta}{3\alpha}\right),
\end{equation}
which immediately implies \eqref{eq:Iasymptotic}.

Next change variables $t \rightarrow t + \beta/(3 \alpha)$ to get
\begin{equation*}
 I = e\big(\frac{-2 \beta^3}{27 \alpha^2} \big) \intR h\big(t + \frac{\beta}{3 \alpha}\big) e\big(\alpha t^3 -\frac{\beta^2}{3\alpha} t\big) dt.
\end{equation*}
Now use Fourier inversion on $h_{\alpha, \beta}(t) = h(t + \frac{\beta}{3 \alpha})$ (again, $h_{\alpha,\beta}$ satisfies \eqref{eq:gbound} except its support may include negative reals) and reverse the orders of integration (justified by uniform convergence following from integration by parts) to get 
\begin{equation*}
 I = e\big(\frac{-2 \beta^3}{27 \alpha^2} \big) \intR \widehat{h_{\alpha, \beta}}(-t)  \intR e\big(\alpha y^3 -(\frac{\beta^2}{3\alpha} + t) y\big) dy dt.
\end{equation*}
From a change of variables and some simple symmetry arguments, note that
\begin{equation*}
 \intR e\big(\alpha y^3 -(\frac{\beta^2}{3\alpha} + t) y\big) dy = \frac{2}{(6 \pi \alpha)^{1/3}} \int_0^{\infty} \cos\big(\frac13 y^3 - 2 \pi \frac{\frac{\beta^2}{3\alpha} + t}{(6 \pi \alpha)^{1/3}} y\big) dy,
\end{equation*}
which using the definition of the Airy function $\Ai(x)$ can be expressed as
\begin{equation*}
 \frac{2\pi}{ (6 \pi \alpha)^{1/3}} \Ai\big(-2\pi \frac{\frac{\beta^2}{3\alpha} + t}{(6 \pi \alpha)^{1/3}}\big).
\end{equation*}
In terms of $I$, we thus have
\begin{equation*}
I = \frac{2\pi}{(6 \pi \alpha)^{1/3}} e\big(\frac{-2 \beta^3}{27 \alpha^2} \big) \intR \widehat{h_{\alpha, \beta}}(-t)  \Ai\big(-2 \pi \frac{\frac{\beta^2}{3\alpha} + t}{(6 \pi \alpha)^{1/3}}\big) dt.
\end{equation*}
Next change variables by $t \rightarrow \frac{\beta^2}{3 \alpha} t$ to get
\begin{equation*}
I = \frac{2\pi}{(6 \pi \alpha)^{1/3}} e\big(\frac{-2 \beta^3}{27 \alpha^2} \big) 
\intR 
\frac{\beta^2}{3 \alpha} \widehat{h_{\alpha, \beta}}\big(-\frac{\beta^2}{3 \alpha}t\big)  
\Ai\big(-2 \pi \frac{\frac{\beta^2}{3\alpha}(1 + t)}{(6 \pi \alpha)^{1/3}}\big) dt.
\end{equation*}
We insert the asymptotic expansion for the Airy function at large negative argument (see (4.07) of \cite{Olver}), namely
\begin{equation*}
 \Ai(-x) \sim \frac{1}{\sqrt{\pi} x^{\frac14}} \Big[\cos\big( \frac{2}{3} x^{3/2} -\frac{\pi}{4} \big) \sum_{k} \frac{c_{2k}}{x^{3k}} +  \sin\big( \frac{2}{3} x^{3/2} -\frac{\pi}{4} \big) \sum_{k} \frac{c_{2k+1}}{x^{\frac32 (2k+1)}}\Big],
\end{equation*}
for certain explicit constants $c_k$ (in particular, $c_0 = 1$).  
To justify this, we note that integration by parts shows $\widehat{h_{\alpha, \beta}}(y)  \ll (y^{-1} T_0)^j$ where the implied constant depends on the support of $h$. Thus we may truncate the integral at $t \ll \alpha^{-2/3}$, say, with a negligible error of size any power of $\alpha^{-1}$.
We then have
\begin{equation*}
 I \sim \frac{2\sqrt{\pi}}{(6 \pi \alpha)^{1/3}} \e{-2 \beta^3}{27 \alpha^2} 
\int_{-\alpha^{-2/3}}^{\alpha^{-2/3}}
\frac{\beta^2}{3 \alpha} \widehat{h_{\alpha, \beta}}\big(-\frac{\beta^2}{3 \alpha}t\big)  
\frac{\cos\big( \frac{2}{3} \big(\frac{2 \pi \frac{\beta^2}{3\alpha}(1 + t)}{(6 \pi \alpha)^{1/3}}\big)^{3/2} -\frac{\pi}{4} \big)}{\big(\frac{2 \pi \frac{\beta^2}{3\alpha}(1 + t)}{(6 \pi \alpha)^{1/3}} \big)^{1/4}}
dt,
\end{equation*}
where the lower-order terms have a similar shape but are multiplied by powers of $\frac{\alpha^2}{\beta^3} (1+t)^{-3/2} \asymp \alpha^{-1} (1 + t)^{-3/2}$.  The terms with $\cos$ replaced by $\sin$ are treated similarly, so we work with $\cos$ only.  This expression simplifies as
\begin{equation*}
 I \sim \frac{2^{\half}}{\beta^{\frac12}} e\big(\frac{-2 \beta^3}{27 \alpha^2} \big)
\intR 
\frac{\beta^2}{3 \alpha} \widehat{h_{\alpha, \beta}}\big(-\frac{\beta^2}{3 \alpha}t\big) (1+t)^{-\frac14}
\cos\big( \frac{2}{3} \big(\frac{2 \pi \frac{\beta^2}{3\alpha}(1 + t)}{(6 \pi \alpha)^{1/3}} \big)^{3/2} -\frac{\pi}{4} \big)
dt.
\end{equation*}
We expand $(1+t)^{-\frac14}$ into a Taylor series, developing the expansion futher.

Now write $\cos{x} = \frac{1}{2 } ( e^{ix} + e^{-ix})$, and write $I \sim I_{+} + I_{-}$ correspondingly.
Thus
\begin{equation*}
I_{\pm} \sim \frac{1}{(2\beta)^{\half}} e\big(\frac{-2 \beta^3}{27 \alpha^2} \big)
\int_{-\alpha^{-2/3}}^{\alpha^{-2/3}}
\frac{\beta^2}{3 \alpha} \widehat{h_{\alpha, \beta}}\big(-\frac{\beta^2}{3 \alpha}t\big)
e\Big( \pm  \big(\frac{2 \beta^3}{27 \alpha^2}(1 + t)^{\frac32} - \frac{1}{8} \big) \Big)
dt.
\end{equation*}
Next we take a Taylor series for $(1 + t)^{3/2} = 1 + \frac{3}{2} t + \dots$ in the exponential.  The quadratic and higher terms are small (much less than $1$) so we take a Taylor series expansion for the exponential of these terms, giving another asymptotic expansion with leading-order term
\begin{equation*}
I_{\pm} \sim \frac{\e{\mp 1}{8}}{(2\beta)^{\half}} e\big(\frac{-2 \beta^3 (1 \mp 1)}{27 \alpha^2} \big)
\int_{-\alpha^{-2/3}}^{\alpha^{-2/3}}
\frac{\beta^2}{3 \alpha} \widehat{h_{\alpha, \beta}}\big(-\frac{\beta^2 }{3 \alpha}t\big)
e\Big( \pm  \big(\frac{ \beta^3}{9 \alpha^2} t  \big) \Big)
dt.
\end{equation*}
Extending the integral back to $\mr$ and changing variables back via $t \rightarrow \frac{3 \alpha}{\beta^2} t$ gives
\begin{equation*}
I_{\pm} \sim \frac{\e{\mp 1}{8}}{(2\beta)^{\half}} e\big(\frac{-2 \beta^3 (1 \mp 1)}{27 \alpha^2} \big)
\intR 
\widehat{h_{\alpha, \beta}}\left(-t\right)
e\big( \pm  (\frac{ \beta}{ 3\alpha} t  ) \big)
dt.
\end{equation*}
Calculating the integral in terms of $h$, we get
\begin{equation*}
I_{\pm} \sim \frac{\e{\mp 1}{8}}{(2\beta)^{\half}} e\big(\frac{-2 \beta^3 (1 \mp 1)}{27 \alpha^2} \big)
h_{\alpha, \beta}\big(\mp \frac{\beta}{3\alpha} \big).
\end{equation*}
Note $h_{\alpha,\beta}\left( \mp \frac{\beta}{3\alpha}\right) = h\left(\frac{\beta}{3\alpha} (1 \mp 1) \right)$. 
Since $h$ has support on the positive reals, we have $h(0) = 0$, in which case only $I_{-}$ contributes to $I$, so then
\begin{equation*}
 I \sim \frac{1}{(2\beta)^{\half}} e\big(\frac{-4 \beta^3}{27 \alpha^2} +\frac18 \big)
h\big(\frac{2\beta}{3\alpha}\big).
\end{equation*}
One easily checks that this is the expected main term one obtains from stationary phase.
\end{proof}

\section{Cleaning}
We have reduced the problem of estimating $K_{A,B}$ (originally given by \eqref{eq:8.18}) to estimating the rather messy expression $K_{A,B}^{0}$ given by \eqref{eq:K002}, in the sense that any bound for $K_{A,B}^{0}$ with a general compactly-supported weight function $h$ satisfying \eqref{eq:fbound} with $Y=1$, is also a bound on $K_{A,B}$ (plus a negligible error term of size, say $O(T^{-100})$ which shall be dwarfed by our upper bound on $K_{A,B}$).

We shall make some preliminary transformations to clean up this expression for $K_{A,B}^{0}$.  The reader interested in the essential details should consider the crucial case $v \asymp 1$, $l=n_1 = 1$, which greatly simplifies the forthcoming calculations.
\begin{mylemma}
\label{lemma:Lreduction}
 Suppose that $K_{A,B}^{0}$ is any expression of the form \eqref{eq:K002} with $h$ satisfying \eqref{eq:fbound} with $Y=1$, and let $\mathcal{L} = \mathcal{L}(A, B, L, T, U, \Delta, N_1, N_2)$ be an expression of the form
\begin{equation}
\label{eq:Ldef}
 \mathcal{L} = \frac{\Delta T}{B} \frac{W'}{L} \sum_{l \asymp L} \sum_{b \asymp B} \sum_{\substack{x \shortmod{b} \\ (x,b) = 1}} \sum_{\substack{n_1 | bl \\ n_1 \asymp N_1}} \int_{v \asymp 1} 
\Big| \sum_{n_2 \asymp N_2} \frac{b(n_1, n_2)}{\sqrt{n_1 n_2}} \frac{S(lx, n_2 ;\frac{bl}{n_1})}{\sqrt{bl/n_1}} \e{v n_2}{W} \Big|^2 dv,
\end{equation}
where $b(n_1, n_2)$ are complex numbers satisfying $|b(n_1, n_2)| \ll |A(n_1, n_2)|$, and where
\begin{equation}
\label{eq:WW'}
 W = \frac{N_2^{2/3} B L^{1/3} }{N_1^{2/3} N^{1/3}}, 
\qquad W' = \frac{UA N_2^{1/3} N_1^{2/3}}{ L^{1/3} N^{2/3}}.
\end{equation}
Then for some such choice of $b(n_1, n_2)$, we have
\begin{equation}
\sum_{l \asymp L} l^{-1} K_{A,B}^{0} \ll T^{\varepsilon} \sup_{A,B, N_1, N_2} \mathcal{L},
\end{equation}
where recall \eqref{eq:AB}, and where
 \begin{equation}
\label{eq:N1N2size}
 N_1^2 N_2 \ll \frac{L N^2}{(UA)^3}.
\end{equation}

\end{mylemma}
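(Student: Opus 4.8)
The plan is to massage the unwieldy expression \eqref{eq:K002} into the clean bilinear shape \eqref{eq:Ldef} by a chain of elementary moves: a Cauchy--Schwarz step to free the $n_1$-sum from the square, dyadic localizations of $n_1$ and $n_2$, a change of variables in $v$ that simultaneously linearizes the oscillatory phase in $n_2$ and exposes the length $W'$ of the effective $v$-range, and finally the absorption of all the bounded, unimodular, and $v$-independent debris into new coefficients $b(n_1,n_2)$. The one genuinely delicate point will be the choice of substitution in $v$; everything else is bookkeeping.

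First, since $l^{-1}\asymp L^{-1}$ it suffices to bound $L^{-1}\sum_{l\asymp L}K_{A,B}^{0}$. I would open the square in \eqref{eq:K002} and apply Cauchy's inequality to the sum over $n_1$; as $n_1\mid bl$ there are only $\ll d(bl)\ll T^{\varepsilon}$ such $n_1$, so this pulls the $n_1$-sum outside the square at a cost of $T^{\varepsilon}$. A dyadic partition of unity then localizes $n_1\asymp N_1$ and $n_2\asymp N_2$ (at most $\log^2 T$ ranges, over which we take the supremum). The Cauchy--Schwarz step is what lets us now reorder the remaining sums so that the $v$-integral sits \emph{inside} the sums over $b$, $x$ and $n_1$ --- this is what will license an $(b,l,n_1)$-dependent substitution in $v$. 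Since $h$ is compactly supported on $\mathbb{R}^{+}$, on its support $\tfrac{(UAB)^3 n_2 n_1^2}{v^3 b^3 l N^2}\asymp 1$; using $b\asymp B$, $l\asymp L$, $n_1\asymp N_1$, $n_2\asymp N_2$ this forces $v\asymp W'$ with $W'$ as in \eqref{eq:WW'}, and the original restriction $v\ll 1$ becomes precisely $N_1^2 N_2\ll LN^2/(UA)^3$, i.e.\ \eqref{eq:N1N2size}.

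Next I would rescale $v\mapsto W'v$ and then perform a second substitution $v\mapsto \mu(b,l,n_1)v^{-2}$, choosing $\mu\asymp 1$ appropriately (absorbing absolute constants together with the deviations of $b,l,n_1$ from $B,L,N_1$) so that the oscillatory phase $\e{-4 n_2 n_1^2(UAB)^2}{v^2 b^3 l N}$ of \eqref{eq:K002} collapses to $\e{\pm v n_2}{W}$ in the new variable; a direct check shows the forced value of $W$ is precisely $N_2^{2/3}BL^{1/3}/(N_1^{2/3}N^{1/3})$, the quantity in \eqref{eq:WW'}. This substitution is monotonic on the relevant $v$-range, contributes a Jacobian factor $\asymp W'$ to the measure (along with a harmless bounded, $n_2$-independent weight that I would bound by a constant and pull out of the $v$-integral), and turns the argument of $h$ into one proportional to $n_2 v^{3/2}$, which is $\asymp 1$ on the support. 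The obstruction here --- and the reason the argument of $h$ does not simply disappear --- is that the $v^{-2}$-homogeneity of the oscillatory phase and the $v^{-3}$-homogeneity of $h$ cannot be normalized at the same time; one of them must survive, and we arrange for it to be the gentler factor, inside $h$.

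Finally I would clear away the leftover factors inside the $n_2$-sum, namely $h(c\,n_2 v^{3/2})$ and the phase $(n_1 n_2^2)^{2iy_0}=n_1^{2iy_0}n_2^{4iy_0}$. The factors $n_1^{2iy_0}$, the overall sign, and (after the next move) $c^{-iy}$ and $v^{-3iy/2}$ are all unimodular and independent of $n_2$, so they factor out of $\sum_{n_2}$ and vanish under $|\cdot|^2$; the factors $n_2^{4iy_0}$ and $n_2^{-iy}$ get absorbed into $b(n_1,n_2)$. To separate the $v$- and $n_2$-dependence in $h$ I would use Mellin inversion, $h(c\,n_2 v^{3/2})=\tfrac{1}{2\pi}\int_{\mathbb{R}}\check h(iy)(c\,n_2 v^{3/2})^{-iy}\,dy$, truncate at $|y|\ll T^{\varepsilon}$ with negligible error, apply Cauchy--Schwarz to move the (now short) $y$-integral outside, and bound it by its length times the supremum over $y$. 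The resulting coefficients satisfy $|b(n_1,n_2)|=|A(n_2,n_1)|=|A(n_1,n_2)|$ by $A(n,m)=\overline{A(m,n)}$, and assembling the prefactor $\tfrac{\Delta T^{1+\varepsilon}}{B}$, the factor $L^{-1}$, the Jacobian $W'$, and the several $T^{\varepsilon}$ losses gives $\sum_{l\asymp L}l^{-1}K_{A,B}^{0}\ll T^{\varepsilon}\sup_{A,B,N_1,N_2}\mathcal{L}$, as required.
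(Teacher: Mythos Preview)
Your approach is essentially the paper's own proof: Cauchy--Schwarz on the $n_1$-sum, dyadic localization, a substitution in $v$ to linearize the phase in $n_2$, then Mellin inversion on $h$ to decouple the remaining variables. One slip to fix: the substitution $v\mapsto \mu v^{-2}$ cannot turn the $v^{-2}$-phase into a linear one (it would produce $v^4$); the correct exponent is $-1/2$, i.e.\ $v\mapsto \mu v^{-1/2}$ (indeed the paper uses the single change $v\to C\,v^{-1/2}$ with $C\asymp W'$), and your own subsequent claim that the argument of $h$ becomes $\propto n_2 v^{3/2}$ already presupposes the correct exponent.
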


\begin{proof}
We begin with the representation \eqref{eq:K002} and apply Cauchy's inequality to take the sum over $n_1 | bl$ outside the absolute values, and divide up the sum over $n_2$ into dyadic intervals $n_2 \asymp N_2$, where in view of \eqref{eq:xsize},
\begin{equation}
 \label{eq:n2size}
N_2 \ll \frac{l N^2}{n_1^2 (UA)^3}.
\end{equation}
With the shorthand notation
\begin{equation}
 K' = \int\limits_{v \ll 1} \Big|\sum_{n_2 \asymp N_2} \frac{a(n_1, n_2)}{\sqrt{n_1 n_2}} \frac{S(lx, n_2 ; \frac{bl}{n_1})}{\sqrt{bl/n_1}} h\leg{(UAB)^3 n_2 n_1^2}{v^3 b^3 l N^{2}} \e{4 n_2 n_1^2 (UAB)^2}{v^2 b^3 lN} \Big|^2 dv,
\end{equation}
where $a(n_1, n_2)$ are certain complex numbers with the same absolute value as $A(n_2, n_1)$, we have
\begin{equation}
K_{A,B}^{0} \ll T^{\varepsilon} \frac{\Delta T}{B}    \sum_{b \asymp B} \sum_{\substack{x \shortmod{b} \\ (x,b)=1}}\sum_{n_1 | bl} K'.
\end{equation}

Next, locate $n_1 \asymp N_1$, so that \eqref{eq:N1N2size} holds, and recall $l \asymp L$.
For such $l$ and $n_1$, change variables in $v$ via
\begin{equation*}
 v \rightarrow \frac{UAB^{3/2} N_2^{1/3} n_1  L^{1/6}}{b^{3/2} l^{1/2} N^{2/3} N_1^{1/3}} v^{-1/2}, 
\end{equation*}
to get
\begin{equation*}
K' \ll  W'
\int\limits_{v \asymp 1} \Big|\sum_{n_2 \asymp N_2} \frac{a(n_1, n_2)}{\sqrt{n_1 n_2}} \frac{S(lx, n_2 ; \frac{bl}{n_1})}{\sqrt{bl/n_1}} h\Big( \big(\frac{v^3 b^3 l n_2^2 N_1^2}{B^3 L N_2^2 n_1^2} \big)^{1/2} \Big) \e{vn_2  }{W}  \Big|^2 dv,
\end{equation*}
where $W$ and $W'$ are given by \eqref{eq:WW'}, the restriction to $v \asymp 1$ is redundant to the support of $h$, and where we use positivity and the location of the variables to write $W'$ in terms of capital letters rather than lowercase letters.

Next separate the variables $v$, $b$, $l$, $n_1$, and $n_2$ in $h\Big( \big(\frac{v^3 b^3 l n_2^2 N_1^2}{B^3 L N_2^2 n_1^2} \big)^{1/2} \Big)$ by using the Mellin inversion formula; since $v$ is already located to be $\asymp 1$, and $b \asymp B$, $l \asymp L$, and $n_1 \asymp N_1$, $n_2 \asymp N_2$.  This can be done with a Mellin integral of length $O(T^{\varepsilon})$ which can be taken to the outside of $\sum_l l^{-l} K_{A,B}^{0}$ (after an application of Cauchy-Schwarz) and then bounded by its length times the supremum.  Effectively this simply changes the coefficients to say $b(n_1, n_2)$ having the same absolute values as $a(n_1, n_2)$.
\end{proof}

\section{Final step: applying the large sieve}
\begin{mylemma}
\label{lemma:Lbound}
 Suppose that $\mathcal{L}$ is given by \eqref{eq:Ldef} as in Lemma \ref{lemma:Lreduction}.  Then
\begin{equation}
 \mathcal{L} \ll T^{3 + \varepsilon} \sum_{ln \ll T^{3 + \varepsilon}} \frac{|b(l, n)|^2}{ln}.
\end{equation}
\end{mylemma}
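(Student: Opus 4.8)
The plan is to open the Kloosterman sum in \eqref{eq:Ldef}, reduce the resulting sum over $b$, $x$, $n_1$, $n_2$ to a bilinear form in the variable $n_2$ to which the variant large sieve (Lemma~\ref{lemma:variantsieve}) applies with $v$ playing the role of the oscillatory integration variable, and then optimise the emerging parameters against \eqref{eq:WW'}, \eqref{eq:AB}, and \eqref{eq:N1N2size}.

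Concretely, set $q = bl/n_1$ and open
\[
 S(lx, n_2; q) = \sum_{\substack{d \shortmod{q} \\ (d,q)=1}} \e{lxd + n_2 \overline{d}}{q}, \qquad \e{lxd}{q} = \e{x d n_1}{b},
\]
so that the inner sum over $n_2$ in \eqref{eq:Ldef} becomes $q^{-1/2}\sum_{d}\e{xdn_1}{b}\,Z(d)$, where $Z(d) = \sum_{n_2}\frac{b(n_1,n_2)}{\sqrt{n_1n_2}}\e{n_2\overline{d}}{q}\e{vn_2}{W}$. Squaring out and carrying out the sum over $x \shortmod{b}$ produces a Ramanujan sum $c_b\big((d-d')n_1\big)$; its divisor-sum structure together with a Cauchy-Schwarz inequality contributes only a factor $\gcd(n_1,b)$ in place of the wasteful $\phi(b)$ one gets by bounding $\e{xdn_1}{b}$ trivially, and this saving turns out to be essential for the numerology. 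One then trades the sum over admissible pairs $(b,l)$ for a sum over the single modulus $q \asymp BL/N_1$, each value of $q$ occurring with multiplicity $\ll d(qn_1) \ll T^\varepsilon$; this is where the $1/L$-normalisation in \eqref{eq:Ldef} gets exploited.

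These steps reduce $\mathcal{L}$, up to the prefactor $\frac{\Delta T}{B}\frac{W'}{L}$ and a $\gcd(n_1,b)$-factor (summed appropriately over $b$, and $\ll N_1 T^\varepsilon$), to expressions
\[
 \int_{v \asymp 1} \sum_{q \asymp BL/N_1}\ \sum_{\substack{d \shortmod{q} \\ (d,q)=1}} \Big| \sum_{n_2 \asymp N_2} \frac{b(n_1,n_2)}{\sqrt{n_1 n_2}}\, \e{n_2 d}{q}\, \e{v n_2}{W} \Big|^2 dv,
\]
summed over $n_1 \asymp N_1$. Lemma~\ref{lemma:variantsieve}, applied with $m = n_2$, $f(m) = m/W$ (so that $f' \equiv 1/W$ and $X = W$ in its notation), with the $v$-interval of length $\asymp 1$ and moduli up to $\asymp BL/N_1$, bounds each such expression by $\big((BL/N_1)^2 + W\big)\sum_{n_2}\frac{|b(n_1,n_2)|^2}{n_1 n_2}$. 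It then remains to verify that the resulting total coefficient is $\ll T^{3+\varepsilon}$: substituting $W$, $W'$ from \eqref{eq:WW'} and $Q = BL/N_1$, eliminating $N_2$ via \eqref{eq:N1N2size}, and invoking \eqref{eq:AB}, $\Delta = T^{1-\varepsilon}$, $U \le \Delta$, $N = P/L^2 \ll T^{3+\varepsilon}/L^2$, together with the lower bound $UA \gg T^{1+\varepsilon}$ available because Lemma~\ref{lemma:KABsmall} already disposes of the range $A \ll U^{-1}T^{1+\varepsilon}$, one checks that both the $(BL/N_1)^2$ and the $W$ contributions are $\ll T^{3+\varepsilon}$ after a short case analysis (several extreme configurations of $N, N_1, N_2, B, L$ being vacuous by \eqref{eq:N1N2size}); the same inequalities give $n_1 n_2 \ll T^{3+\varepsilon}$ on the support of $b(n_1,n_2)$, and relabelling $(n_1,n_2)\to(l,n)$ finishes the proof.

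The main obstacle is the reduction step: opening the Kloosterman sum introduces the twist $\e{xdn_1}{b}$ on the $x$-summation, and one must extract from it only a factor $\gcd(n_1,b)$, losing no more than $T^\varepsilon$ when collapsing the $(b,l)$-sum under the divisibility $n_1 \mid bl$; a cruder treatment leaves $\phi(b)\asymp B$ and the numerology fails. The subsequent optimisation is itself delicate and case-sensitive, so I would double-check the change of variables in $v$ behind the definitions \eqref{eq:WW'} of $W$ and $W'$, on which the entire computation rests.
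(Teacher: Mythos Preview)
Your overall strategy---open the Kloosterman sum, execute the $x$-sum to get a Ramanujan sum, collapse $(b,l)$ to the single modulus $q=bl/n_1$, then apply Lemma~\ref{lemma:variantsieve} in the $v$-variable---is sound, and the target numerology you arrive at, namely
\[
\frac{\Delta T}{B}\,\frac{W'}{L}\,N_1\Big(\big(\tfrac{BL}{N_1}\big)^2 + W\Big)\sum \frac{|b(n_1,n_2)|^2}{n_1 n_2},
\]
is exactly the bound \eqref{eq:11.15} of the paper (with $S\ll L$), so the endgame is identical. The route, however, is genuinely different. The paper never opens the Kloosterman sum directly. Instead it first strips out $d=(b,n_1)$ to force $(b,n_1)=1$, then factors $l/n_1=rs$ with $r\mid b^\infty$, $(s,b)=1$, and uses the \emph{multiplicativity} of $S(lx,n_2;bl/n_1)$ along the coprime decomposition $q=br\cdot s$. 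The $s$-piece becomes a Ramanujan sum $S(0,n_2;s)$, handled by Cauchy--Schwarz as in \eqref{eq:11.7}; the $br$-piece is dealt with by the exact identity
\[
\sum_{x \shortmod b}\Big|\sum_m c_m S(rx,m;br)\Big|^2
= br^2\sumstar_{y \shortmod b}\Big|\sum_{r\mid m} c_m\,e\!\Big(\frac{y\,m/r}{b}\Big)\Big|^2,
\]
which forces $r\mid n_2$ and converts everything into genuine additive characters to moduli $bs$. This structural reduction replaces your Ramanujan-sum-plus-crude-gcd step and makes the transition to Lemma~\ref{lemma:variantsieve} completely clean.

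What each approach buys: the paper's multiplicativity argument keeps exact track of the arithmetic and lands immediately on reduced fractions, so the large sieve is applied with no loss; your approach is more direct and avoids the $r\mid b^\infty$ bookkeeping, but the step ``$c_b((d-d')n_1)$ contributes only a factor $\gcd(n_1,b)$ after Cauchy--Schwarz'' is the delicate point you correctly flag. It works because you can afford the crude bound $\gcd(n_1,b)\ll N_1$ (this is what produces the $N_1$ in front of $W$ in \eqref{eq:11.15}), together with the divisor bound on the multiplicity of $(b,l)\mapsto q$; but you should spell this out, since a reader might expect you to exploit the Ramanujan-sum cancellation more fully and wonder where the extra $N_1$ is spent. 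Your numerology check at the end is correct: $W'\ll 1$ and $W'W\ll BLN/((UA)^2 N_1^2)$ from \eqref{eq:N1N2size}, combined with $B\ll N/(A\Delta T)$, $L^2N\ll T^{3+\varepsilon}$ and $UA\gg T$, give both terms $\ll T^{3+\varepsilon}$.
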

Lemma \ref{lemma:Lbound} shall complete the proof of Theorem \ref{thm:mainthm''} by the reductions of Lemmas \ref{lemma:K0}, \ref{lemma:K00}, and \ref{lemma:Lreduction}, combined with the bound of Lemma \ref{lemma:KABsmall} for $A \leq T^{\varepsilon}$.

\begin{proof}
Let $d = (b, n_1)$ and change variables $b \rightarrow d b$, $n_1 \rightarrow d n_1$ to get
\begin{equation}
\label{eq:13.1}
\mathcal{L} \ll T^{\varepsilon} \frac{\Delta T}{B} \frac{W'}{L} \sum_{l \asymp L}  \int\limits_{v \asymp 1}  \sum_{\substack{d \ll N_1}} \sum_{b \asymp B/d} \sum_{\substack{x \shortmod{bd} \\ (x,bd)=1}} \sum_{\substack{n_1 | l \\ (n_1, b) = 1 \\ dn_1 \asymp N_1}} \Big|  \sum_{n_2 } \frac{b(dn_1,n_2)}{\sqrt{d n_1 n_2}} \frac{S(lx, n_2 ; \frac{bl}{n_1} )}{\sqrt{bl/n_1}} \e{vn_2}{W} \Big|^2 du.
\end{equation}
Write $\frac{l}{n_1}  = rs$ where $r |b^{\infty}$ (meaning all primes dividing $r$ also divide $b$) and $(s,b) = 1$.  Next change variables to eliminate $l$ and note that the sum over $x$ only depends modulo $b$ to give
\begin{equation}
\label{eq:13.2}
\mathcal{L} \ll T^{\varepsilon} \frac{\Delta T}{B} \frac{W'}{ L} \int\limits_{v \asymp 1}  \sum_{d \ll N_1} d \sum_{b \asymp \frac{B}{d}} \sum_{\substack{n_1 r s \asymp L \\(n_1 s, b)  = 1 \\ dn_1 \asymp  N_1}} \sum_{r |b^{\infty}} \frac{1}{brs} \mathcal{L}_1,
\end{equation}
where as shorthand
\begin{equation}
 \mathcal{L}_1 = \sum_{\substack{x \shortmod{b} \\ (x,b)=1}}  \big|  \sum_{n_2 } \frac{b(dn_1,n_2)}{\sqrt{dn_1 n_2}} S\left( n_1 rs x, n_2 ; brs \right) \e{vn_2}{W} \big|^2.
\end{equation}
Next we simplify $\mathcal{L}_1$, by showing
\begin{equation}
\label{eq:K1}
 \mathcal{L}_1 \leq  br^2s \sumstar_{h \shortmod{bs}} \big| \sum_{n_2} \frac{b(dn_1, rn_2)}{\sqrt{dn_1 r n_2}}  e\big(\frac{h n_2}{bs} \big) e\big(\frac{v r n_2}{W} \big) \big|^2.
\end{equation}

\begin{proof}[Proof of \eqref{eq:K1}]
From the multiplicativity relation for Kloosterman sums, we obtain
\begin{equation*}
S\left( n_1 rs x, n_2 ; brs \right) = S(n_1 r s \overline{s} x, n_2 \overline{s}, br) S(n_1 rs \overline{br} x, n_2 \overline{br} ;s) = S(n_1 r  x, n_2 \overline{s} , br) S(0, n_2 ;s) ,
\end{equation*}
which becomes $S(r x, n_2  ;br) S(0, n_2;s)$ after the change of variables $x \rightarrow s \overline{n_1} x$ (recall $n_1$ is coprime to $br$). Thus
\begin{equation*}
 \mathcal{L}_1 = \sum_{\substack{x \shortmod{b} \\ (x,b)=1}}  \big|  \sum_{m } b_m S\left( r x, m ; br \right) \big|^2, \text{ with }  b_m = \frac{b(dn_1, m)}{\sqrt{dn_1 m}} S(0, m;s) \e{vm}{W}.
\end{equation*}

Next we compute for arbitrary complex numbers $c_m$,
\begin{equation}
\label{eq:13.4}
\sum_{x \shortmod{b}} \big| \sum_m c_m S(rx, m ;br) \big|^2 = b \sum_{m_1, m_2} c_{m_1} \overline{c_{m_2}} \sumstar_{\substack{h_1, h_2 \shortmod{br} \\h_1 \equiv h_2 \shortmod{b}}} \e{h_1 m_1 - h_2 m_2}{br}.
\end{equation}
Change variables via $h_i = y + b z_i$, $i=1,2$, where $y$ runs modulo $b$ and $z_i$ runs modulo $r$.  Since $r |b^{\infty} $, the condition that $(h_i, br) = 1$ is equivalent to $(y, b) = 1$.  The sum over $z_i$ vanishes unless $r | m_i$, in which case the sum is $r$.  Thus \eqref{eq:13.4} equals
\begin{equation*}
b r^2 \sum_{r|m_1, m_2} c_{m_1} \overline{c_{m_2}} \sumstar_{y \shortmod{b}} \e{y (\frac{m_1}{r} - \frac{m_2}{r})}{b} = b r^2 \sumstar_{y \shortmod{b}} \Big| \sum_{r|m} c_{m} \e{y \frac{m}{r}}{b}\Big|^2,
\end{equation*}
and hence
\begin{equation*}
 \mathcal{L}_1 \leq br^2 \sumstar_{y \shortmod{b}} \big| \sum_{r | m} c_m \e{y \frac{m}{r}}{b} S(0,m;s) \big|^2, \text{ with }  c_m = \frac{a(dn_1, m)}{\sqrt{dn_1 m}} \e{vm}{W}.
\end{equation*}
By Cauchy's inequality, we have for any complex coefficients $b_l$ that
\begin{equation}
\label{eq:11.7}
\big| \sum_{l} b_l S(0,l;s) \big|^2 \leq s \sumstar_{h \shortmod{s}} \big|\sum_l b_l e\big(\frac{hl}{s} \big) \big|^2.
\end{equation}
Note that $S(0, l ;s) = S(0,\frac{l}{r} ;s)$ since $(r,s) = 1$.  Hence
\begin{equation*}
 \mathcal{L}_1 \leq br^2s \sumstar_{h \shortmod{s}} \sumstar_{y \shortmod{b}} \Big| \sum_{r | m} c_m \e{y \frac{m}{r}}{b} \e{h \frac{m}{r}}{s} \Big|^2, 
\end{equation*}
which gives \eqref{eq:K1} using the Chinese remainder theorem and changing variables $m \rightarrow mr$.
\end{proof}

Picking back up the chain of reasoning, we insert \eqref{eq:K1} into \eqref{eq:13.2}, getting
\begin{equation}
\label{eq:13.6}
\mathcal{L} \ll  T^{\varepsilon} \frac{\Delta T}{B} \frac{W'}{L} \int\limits_{v \asymp 1} \sum_{d \ll N_1} d  \sum_{b \asymp \frac{B}{d}} \sum_{\substack{n_1 r s \asymp L \\(n_1 s, b)  = 1 \\ dn_1 \asymp  N_1}} \sum_{r |b^{\infty}} r \sumstar_{y \shortmod{bs}}  
\Big|  \sum_{n_2} \frac{b(dn_1,rn_2)}{\sqrt{dn_1 r n_2}} e\big(\frac{y n_2}{bs}\big) e\big(\frac{vr n_2}{W} \big) \Big|^2 dv.
\end{equation}
Next relax the condition that $r |b^{\infty}$ and for convenience locate the variable $s \asymp S$, where 
\begin{equation}
 N_1 rS \asymp  d L.
\end{equation}
We need to bound the supremum over $S$, where note that we may assume $S \ll L$.
In addition change variables $bs \rightarrow c$, getting
\begin{equation}
\label{eq:13.9}
\mathcal{L} \ll T^{\varepsilon} \frac{\Delta T}{B} \frac{W'}{ L} 
 \sum_{\substack{dn_1 \asymp N_1 }} d \sum_{r \ll \frac{L}{n_1 S}}  r \int\limits_{v \asymp 1} \sum_{c \asymp \frac{BS}{d}} \sumstar_{h \shortmod{c}}  \Big|  \sum_{r n_2 \asymp N_2} \frac{b(dn_1,r n_2)}{\sqrt{dn_1 r n_2}} \e{y n_2}{c} \e{v n_2 }{W/r} \Big|^2 dv,
\end{equation}
for some value of $S$.  
Finally, an application of Lemma \ref{lemma:variantsieve} gives
\begin{equation*}
\mathcal{L} \ll T^{\varepsilon} \frac{\Delta T}{B} \frac{W'}{ L} 
\sum_{dn_1 \asymp  N_1} d \sum_{r \ll \frac{L}{n_1 S}}  r \Big( \big(\frac{BS}{d} \big)^2 + \frac{W}{r}\Big)   \sum_{\substack{r n_2 \asymp N_2}}  \frac{|b(dn_1, rn_2)|^2}{ dn_1 r n_2}.
\end{equation*}
We simplify this by first letting $rn_2 \rightarrow n_2$ be a new variable, getting
\begin{equation*}
 \mathcal{L} \ll T^{\varepsilon} \frac{\Delta T}{B} \frac{W'}{ L} \sum_{dn_1 \asymp  N_1} d \Big(\frac{L}{n_1 S} \big(\frac{BS}{d} \big)^2 + W \Big) \sum_{n \asymp N_2}
\frac{|b(dn_1, n_2)|^2}{ dn_1 n_2}.
\end{equation*}
Similarly, let $dn_1 \rightarrow n_1$ be a new variable to get
\begin{equation}
\label{eq:11.15}
\mathcal{L} \ll  T^{\varepsilon} \frac{\Delta T}{B} \frac{W'}{ L}  \Big(\frac{L B^2 S}{N_1} +  W N_1 \Big)  \sum_{l^2 n \ll N_1^2 N_2} \frac{|b(n_1, n_2)|^2}{n_1 n_2}.
\end{equation}
Note that from \eqref{eq:N1N2size} we have
 $W' \ll 1$
and
\begin{equation*}
W' W = \frac{UAB N_2}{N} \ll \frac{BLN}{(UA)^2 N_1^2}.  
\end{equation*}
Thus using $S \ll L$, the expression \eqref{eq:11.15} simplifies as
\begin{equation}
\label{eq:11.17}
\mathcal{L} \ll T^{\varepsilon} \Delta T \Big( \frac{B L}{N_1} +  \frac{ N}{N_1 (UA)^2}\Big) \sum_{l^2 n \ll N_1^2 N_2} \frac{|b(l, n)|^2}{l n}.
\end{equation}
Recalling that $B \ll \frac{N T^{\varepsilon}}{A \Delta T}$, $L^2 N \ll T^{3 +\varepsilon}$, and $\Delta = U = T^{1-\varepsilon}$, finishes the proof.
\end{proof}

\end{document}